\newtheorem{thm}{Theorem}[subsection]
\newtheorem{lem}[thm]{Lemma}
\newtheorem{prop}[thm]{Proposition}
\newtheorem{cor}[thm]{Corollary}
\newtheorem*{prob}{\bf Problem}
\theoremstyle{definition}\newtheorem{df}[thm]{Definition}
\theoremstyle{definition}\newtheorem{rem}[thm]{Remark}
\newtheorem{exm}[thm]{\it Example}
\newtheorem{qn}[thm]{\bf Question}
\newtheorem{eq}[thm]{\it Example}
\theoremstyle{definition}
\renewcommand{\phi}{\varphi}
\newcommand{\N}{\mathbb{N}}
\newcommand{\Homeo}{\mathcal{H}}
\newcommand{\A}{\mathcal{A}}
\newcommand{\Z}{\mathbb{Z}}
\newcommand{\J}{\mathcal{J}}
\newcommand{\tildeX}{\widetilde{X}}
\newcommand{\X}{\underline{X}}
\newcommand{\KX}{{}_{k_2}X_{l_2}}
\newcommand{\F}{\tilde{F}}
\newcommand{\kX}{{}_{k_1}X_{l_1}}
\newcommand{\Kx}{{}_{k_2}[x]_{l_2}}
\newcommand{\kx}{{}_{k_1}[x]_{l_1}}
\newcommand{\jf}{\mathfrak{j}}
\newcommand{\I}{\mathcal{I}}
\newcommand{\D}{\mathcal{D}}
\newcommand{\Sph}{\mathbb{S}}
\newcommand{\La}{\mathcal{L}}
\newcommand{\R}{\mathbb{R}}
\newcommand{\Raf}{\mathcal{R}_0(\tilde{\alpha})}
\newcommand{\Rbt}{\mathcal{R}_0(\tilde{\beta})}
\newcommand{\C}{\mathbb{C}}
\newcommand{\T}{\mathbb{T}}
\newcommand{\taf}{\tilde{\alpha}}
\newcommand{\tbt}{\tilde{\beta}}
\newcommand{\Aff}{\operatorname{Aff}}
\newcommand{\Pj}{\mathcal{P}}
\newcommand{\id}{\operatorname{id}}
\newcommand{\ep}{\varepsilon}
\newcommand{\Lip}{\mathcal{L}}
\newcommand{\G}{\mathcal{G}}
\newcommand{\af}{{\alpha}}
\newcommand{\bt}{{\beta}}
\newcommand{\beq}{\begin{eqnarray}}
\newcommand{\eneq}{\end{eqnarray}}
\begin{document}
\title{A generalization of topological Rokhlin dimension and an embedding result}

\author{Zhuofeng He, Sihan Wei}

\maketitle




\begin{abstract} 
We generalize Gabor's notion of topological Rokhlin dimension of $\mathbb{Z}^k$-actions on compact metric space to a class of general discrete countable amenable group actions which involves the approximate subgroup structure. Then with this generalization, we conclude the finiteness of topological Rokhlin dimension, amenability dimension, dynamic asymptotic dimension and also of the nuclear dimension of the crossed product. An embedding result is also obtained, regarding those systems with mean dimension less that $m/2$ and with a finite-dimensional free factor.

\quad\par
\noindent{\bf Keywords}. Approximate groups $\cdot $ Topological Rokhlin dimension $\cdot$ marker property $\cdot$ amenability dimension $\cdot$ nuclear dimension $\cdot$ embedding problem
\quad\par
\quad\par
\noindent{\bf Mathematics Subject Classification (2020)} 46L05, 46L35, 37B05 (Primary)
\end{abstract}

\section{Introduction}
In the early 21st century, a radical idea was born, claiming that instead of considering “strict mathematical structures”, such as strict equalities, strict structures of algebraic objects(groups, rings, fields, algebras, etc.), it is somehow more appealing to look at the “approximate” ones. Here by the word “strict”, we mean an exact equality, or either an algebraic operation so that under which the element produced by any two elements in the underlying set still lies in it. The “approximate” version makes the strictness collapse. The algebraic structures, or in other words, sets with operations, in which the operations themselves do not produce elements in the underlying sets, but rather elements of larger sets which are in some sense “close” to original ones, were studied. Among these new structures, the theory of approximate groups is one of the most interesting objects, which plays a central role in modern additive combinatorics, the theory of mathematical quasi-crystals, theory of locally compact groups, to name of few(see the discussion historical perspectives of Sect. 1.1, \cite{CHT}).

To put it in a nutshell, an {\it approximate subgroup} in a group $G$ is a symmetric subset $\Lambda\subset G$ containing the identity, with the property that $A^2$ is covered by finitely many left(or right) translates of $A$ itself. This is close the the so-called {\it small doubling property}, which is defined to be such that $|A^2|\le K|A|$ for some constant $K$. Note that a results in \cite{BGT} states that if $G$ is finitely generated, and in addition, there is a $K>0$ and an increasing sequence $\{A_n\}$ of finite subsets of $G$ whose union is $G$ such that $|A_n^2|\le |A_n|$ for all $n\ge1$, $G$ is {\it virtually nilpotent}, and hence of {\it polynomial growth} equivalently. In the present paper, we use the following definition, which removes the symmetry condition.
\begin{df}[=Defintion \ref{AG}]
Let $G$ be a group and $A\Subset G$ be a finite subset. We call $A$ is a {\it (weakly) approximate (sub)group} in $G$, if 

(i) The group identity $e_G\in A$;

(ii) There is a natural number $L_A>0$, such that $A^{-1}A$ is covered by a union of $L_A$ left translates of $A$ by elements in $G$, i.e., there are group elements $g_1, g_2,\cdots, g_{L_A}\in G$
(depending on $A$ itself) with $A^{-1}A\subset \bigcup_{1\le i\le L_A} g_i\cdot A$.
\end{df}

For a (non-necessarily finite generated) group $G$, we say it has the {\it weakly approximate F$\o$lner condition(WAFC)}, if there is a F$\o$lner sequence $\{F_n\}\subset G$ such that every $F_n$ is an approximate subgroup with uniformly bounded $L_G=\sup_{n\ge1}L_{F_n}$, see Definition \ref{AFC}. Note that such constant $L_G$ depends on the choice of the F$\o$lner sequence. As is aforementioned, for a finitely generated group $G$, (WAFC) implies virtual nilpotence(we actually believe that this is an if and only if statement).

In the realm of measure-theoretic dynamical systems, the {\it Rokhlin Lemma}, a.k.a. the {\it Kakutani-Rokhlin Lemma}, has an extensive impact in ergodic theory. It states that every aperiodic measure-preserving dynamical system $X$ can be decomposed into an arbitrarily high tower $\mathcal{T}$ with its levels measurable, such that the remainder set $X\setminus \bigcup_{E\in\mathcal{T}}E$ is of arbitrarily small measure. It is such a massive impact so that the Rokhlin Lemma derives numerous Rokhlin-type concepts, variations and constructions in other mathematical branches such as different versions of {\it topological Rokhlin properties} in topological dynamical systems(see \cite{Gut2}--\cite{GT1} for instance), the {\it Rokhlin dimension, tracial Rokhlin dimension} or the {\it cyclic Rokhlin dimension} in $C^*$-dynamics and the classification program of simple amenable separable $C*$-algebras, see \cite{AGG},\cite{EN}--\cite{GHS},\cite{HWZ},\cite{K}--\cite{SWZ} for a (far from complete) list of them. Intuitively speaking, the key advantage stemming from these “Rokhlin type” properties or derived concepts is that, it reflects some “shift-type” behaviour of the underlying actions approximately, as is pointed out in \cite{SWZ}. In particular, the Rokhlin dimension among these, can be regarded as a “high dimensional” Rokhlin property, which is also what we will be focusing on in the present paper, plays a central role in the classification of $C^*$-dynamics in recent years.

Passing from topological dynamics to the classification program of $C^*$-algebras raised and developed by G. Elliott and his many collaborators, a special class of separable simple amenable $C^*$-algebras, called the {\it crossed products of unital $C^*$-algebras by countable amenable groups} have attracted substantial attention. The class of crossed products arising from minimal topological dynamical systems on  compact metric spaces is, among separable simple $C^*$-algebras, one of the most natural to consider. On the other hand, similarly to topological dynamics, it turns out that many interesting examples of finite groups or $\Z$  on $C^*$-algebras have Rokhlin-type properties. Enlightened by these examples and the fact that unlike the situation of Von Neumann algebras, the lack of projections is ubiquitous for $C^*$-algebras, the Rokhlin dimension of $\Z$ and finite groups on $C^*$-algebras is defined and developed in \cite{HWZ}, by allowing towers consisting of positive elements instead of requiring projections. This concept was later generalized to actions of residually finite groups and finitely generated nilpotent groups in \cite{SWZ}. All these attempts consequently led to classification theorems. 

Back to crossed products arising from topological dynamical systems, in order to have an estimate of the nuclear dimension of $C^*$-crossed products, the {\it (topological) Rokhlin dimension} of $\Z^m$-actions (in fact of $\Z^m\times G$ for any locally finite group $G$) is defined in \cite{Sza} for topological dynamical systems. It is somehow surprising in \cite{Sza} that , the topological Rokhlin dimension is related to the {\it marker property}, which is a concept originally defined by Y. Gutman when dealing with the well-known {\it embedding problem} in topological dynamics. However, it is at the same time reasonable since the so-called marker property actually stems from the topological Rokhlin Lemma of \cite{Ln}. Therefore, G. Szab$\acute{\rm o}$ in \cite{Sza} asked following two questions.
\begin{qn}[Question 5.6, \cite{Sza}]
What is the right notion of Rokhlin dimension for actions of a larger class of countable, non-abelian, discrete, amenable groups?
\end{qn}

\begin{qn}[Question 5.7, \cite{Sza}]
Suppose that one can successfully generalize Rokhlin dimension to actions of a larger class of countable, discrete, amenable groups. For which groups is it automatic that free actions on finite dimensional spaces have finite Rokhlin dimension?
\end{qn}

We remark that, although a right notion of Rokhlin dimension for general group actions was absent, with a topological version of the {\it Ornstein-Weiss tower decomposition} called {\it almost finiteness} defined by D. Kerr which can be regarded as a dynamical $\mathcal{Z}$-stability, the celebrated theorem was obtained in \cite{KS}, stating that the crossed products of free minimal actions $G\curvearrowright X$ where $X$ is a compact metric space with finite covering dimension and $G$ is such that each of its finitely generated subgroups has subexponential growth are classified by the Elliott invariant (ordered $K$-theory paired with tracial states) and are simple ASH-algebras of topological dimension at most $2$.

One of the motives of the present paper is, to some extent, attempting to compensate the lack of such a right notion of Rokhlin dimension, especially on {\it discrete Heisenberg groups}. Recall that a countable group $G$ is said to have (WAFC), if it admits a F$\o$lner sequence $\{F_n\}$, each of which is an approximate subgroup with a uniform bounded sequence $\{L_{F_n}\}$. All the Heisenberg groups have (WAFC). Whatsoever, we note that we just don't necessarily require the group $G$ to be finitely generated.
\begin{df}[=Definition \ref{Rokdim}+Definition \ref{srd}]
Let $D\in\N\cup\{0\}$. We say that $(X,G)$ has {\it topological Rokhlin dimension} $D$, and denote ${\rm dim}_{\rm Rok}(X,G)=D$, if $D$ is the smallest natural number with the following property:

For every $n\in\N$, there exist open subsets $U_0,U_1,\cdots,U_D\subset X$ such that

\noindent (i) for every $0\le i\le D$, $\{g\overline{U_i}: g\in F_n\}$ is a disjoint (closed) tower with the shape $F_n$, and

\noindent (ii) the union of these (open) towers covers $X$, that is, $\bigcup_{0\le i\le D} F_nU_i=X$.

Furthermore, if there exists a natural number $N\ge n$ such that $F_n^2\subset F_N$ and the open sets $U_0,U_1,\cdots,U_D$ can be chosen with the property that the family $\{g\overline{U_i}: g\in F_N\}$ is disjoint for every $i=0,1,\cdots,D$, then we say $(X,G)$ has {\it strong Rokhlin dimension} $D$, written as ${\rm dim}_{\rm sRok}(X,G)=D$.
\end{df}
With the notion of topological (strong) Rokhlin dimension of groups satisfying (WAFC) and under a mild condition  which requires the group $G$ to be “not that nonabelian”, we get the following theorem.
\vspace{0.2cm}

\noindent {\bf Theorem A}\ (=Corollary \ref{frd}) Let $(X, G)$ be a topological dynamical system, where $X$ is a compact metric space and $G$ a countably infinite group with $|Z(F)|=\infty$ for every finite subset $F\subset G$. Let $\{F_n\}\subset G$ be a  symmetric F$\o$lner sequence such that $(G, \{F_n\})$ satisfies (WAFC) with respect to $L_G$.

If $(X,G)$ is free and ${\rm dim}(X)<\infty$, then $(X,G)$ has finite topological Rokhlin dimension. In particular,
\[{\rm dim}_{\rm Rok}(X,G)\le L_G({\rm dim}(X)+1)-1.\]
\vspace{0.2cm}

In Theorem A, we can replace the condition that $(X,G)$ is free with $(X,G)$ having an at most zero dimensional periodic point set, see \cite{Sza}. It is also worth-mentioning that, by our procedure, we can even arrange so that $(X,G)$  has finite strong topological Rokhlin dimension.
\vspace{0.2cm}

\noindent {\bf Theorem B}\ (=Corollary \ref{am}+Corollary \ref{eqf}) Keep the same assumptions on $G$ as in Theorem A and let $(X,G)$ be such that ${\rm dim}(X)<\infty$. Then $(X,G)$ is free if and only if $(X,G)$ has finite strong topological Rokhlin dimension.
\vspace{0.2cm}

As corollaries, these results (including what follows) apply to the case of $\mathbb{H}_{2n+1}(\Z)\times G$, where $\mathbb{H}_{2n+1}(\Z)$ is the discrete Heisenberg group, and $G$ is any of a finite group, a finitely generated abelian group, a locally finite group or a general countable group satisfying (WAFC) with a symmetric F$\o$lner sequence, to name a few.

The notions of{\it amenability dimension}, {\it dynamic asymptotic dimension}, {\it tower dimension}, {\it fine tower dimension} of group actions on topological spaces, {\it amenability dimension} of group actions on $C^*$-algebras and nuclear dimension of $C^*$-algebras were purposed and developed successively by several authors during the study of classification programs, including E. Guentner, R. Willett and G. Yu in \cite{GWY} for the dynamics asymptotic dimension and amenability dimension, D. Kerr and G. Szab$\acute{\rm o}$ in \cite{KS} for the tower dimension and fine tower dimension, W. Winter and J. Zacharias in \cite{WZ} for the nuclear diemsnion. In particular, G. Szab$\acute{\rm o}$, J. Wu and J. Zacharias estimated the amenability dimension of free actions by finitely generated, infinite, nilpotent groups on compact metric spaces of finite covering dimension, which gives the finiteness of the nuclear dimension of the $C^*$-crossed products.

By our finiteness theorem of the strong topological Rokhlin dimension of free group actions on compact metric spaces of finite covering dimension or more generally, with the condition $(TSBP\le d)$ for some $d\in\N\cap\{0\}$, we extend the result of amenability dimension to our setting.
\vspace{0.2cm}

\noindent {\bf Theorem C}\ (=Theorem \ref{fad})  Let $(X, G)$ be a topological dynamical system, where $X$ is a compact metric space and $G$ a countably infinite group with $|Z(F)|=\infty$ for every finite subset $F\subset G$. Let $\{F_n\}\subset G$ be a  symmetric F$\o$lner sequence such that $(G, \{F_n\})$ satisfies (WAFC) with respect to $L_G$.

If $(X, G)$ is free and ${\rm dim}(X)\le d$ for some $d\in\N\cup\{0\}$, then the induced $C^*$-algebraic action $\af: G\curvearrowright C(X)$ has finite amenability dimension estimated by the following inequality
\[{\rm dim}^{+1}_{\rm am}(\af)\le L_G(d+1).\]
\vspace{0.2cm}

As a corollary applying Theorem 8.6, Theorem 4.11 in \cite{GWY} and Theorem 5.14 in \cite{KS}, we obtain
\vspace{0.2cm}

\noindent {\bf Theorem D}\ (=Corollary \ref{nucdim}) Keep the same assumptions as in Theorem C on the group $G$ and the topological dynamical system $(X,G)$. Then the dynamic asymptotic dimension ${\rm dad}(X,G)$, tower dimension ${\rm dim}_{\rm tow}$ and fine tower dimension ${\rm dim}_{\rm ftow}$ of $(X, G)$ are all finite, bounded by
\[{\rm dad}^{+1}(X,G)\le L_G(d+1),\ \ {\rm dim}^{+1}_{\rm tow}(X,G)\le{\rm dim}^{+1}_{\rm ftow}(X,G)\le L_G(d+1)^2.\]
Consequently, the crossed product $C(X)\rtimes_{\af}G$ has finite nuclear dimension given by
\[{\rm dim}^{+1}_{\rm nuc}(C(X)\rtimes_{\af}G)\le L_G(d+1)^2.\]
If in addition, the action is minimal, then $C(X)\rtimes_\af G$ is $\mathcal{Z}$-stable, and also classifiable. In particular, these are all the cases if $X$ has finite covering dimension.
\vspace{0.2cm}

In topological dimension theory, it is a classic result that any compact metric space embeds in the {\it Hilbert cube} $\mathbb{K}=[0,1]^{\N}$, and if ${\rm dim}(X)<\infty$, then it embeds in the finite-dimensional cube $[0,1]^{2{\rm dim}(X)+1}$. This will follow that for any finite dimensional compact metric space $X$ and any homeomorphism $T: X\to X$, the dynamical system $(X,T)$ embeds in the shift $(([0,1]^n)^\Z, \sigma)$, where $n=2{\rm dim}(X)+1$. By a shift on $\Omega^\Z$ here where $\Omega$ is some compact metric space, we mean a shift operation $\sigma$ sending $x$ to $\sigma(x)$ with $(\sigma(x))_{n}=x_{n+1}$. Therefore, a natural question arose from it is to determine the smallest natural number $n$ so that $(X,T)$ can be embedded into the shift $([0,1]^n)^\Z,\sigma)$. The famous {\it Jaworski's embedding theorem} shows that for a topological dynamical system on a finite dimensional space, periodic points are the only obstruction for such an embedding. To be specific, Jaworski in 1974 showed that, if $(X,T)$ is a finite dimensional aperiodic system, then it embeds into the shift on $[0,1]^\Z$, see \cite{Ja}. Later by several topological dynamicists including E. Lindenstrauss(\cite{Ln}), Y. Gutman, M. Tsukamoto and Y. Qiao(\cite{Gut2}--\cite{GT2}) and many other collaborators of them, the embedding theorem was promoted to an extensive generalization. In particular, Y. Gutman and M. Tsukamoto proved in \cite{GT2} that, for minimal dynamical systems on compact metric spaces, the {\it mean dimension} is the only obstruction for the embedding problem. They show the celebrated theorem, claiming that any minimal dynamical system with ${\rm mdim}(X,T)<N/2$ embeds into the shift on $([0,1]^N)^\Z$.

The notion of mean dimension is originally defined by E. Lindenstrauss and B. Weiss, attempting to solve Auslender' problem asking that can all the minimal systems be embedded into $[0,1]^\Z$. Intuitively speaking, the mean dimension counts the number of parameters of systems per second, while the {\it topological entropy} counts the number of bits per second for describing the systems. Any finite dimensional dynamical system has mean dimension zero, so does any dynamical system with finite entropy or {\it unique ergodicity}. Therefore, mean dimension can be regarded as a {\it dual concept} of the topological entropy, mainly used to deal with infinite dimensional systems. On the other hand, it is an interesting and promising approach to the embedding problem in \cite{GQS}, namely, by applying the topological Rokhlin dimension defined by G. Szab$\acute{\rm o}$. In particular, they have the following result.
\begin{thm}[Corollary 3.1, \cite{GQS}]
Let $(X,\Z^k)$ be an extension of a free $Z^k$-action $(Y,\Z^k)$, $D\in\N\cup\{0\}$ and $L\in\N$. If ${\rm mdim}<L/2$, ${\rm dim}_{\rm Rok}(Y,\Z^k)=D$ and $Y$ has finite Lebesgue covering dimension, then there exists an embedding from $(X,\Z^k)$ into $(([0,1]^{(D+1)L+1})^{\Z^k},\sigma)$.
\end{thm}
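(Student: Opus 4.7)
The plan is to combine the Rokhlin tower decomposition supplied by $\dim_{\mathrm{Rok}}(Y,\Z^k)=D$ with a Gutman--Tsukamoto style mean-dimensional embedding performed tower by tower. Since $(Y,\Z^k)$ is a free factor of $(X,\Z^k)$ with finite covering dimension, pulling the Rokhlin towers on $Y$ back along the factor map $\pi\colon X\to Y$ transfers the combinatorial tiling structure to $X$ while preserving equivariance. The key idea is to use each of the $D+1$ pulled-back towers as an independent region on which to perform a Lindenstrauss type mean-dimensional encoding into $[0,1]^L$, and then to append a single extra coordinate whose role is to glue the encodings together without destroying injectivity.

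First I would fix a large $n$ and, from $\dim_{\mathrm{Rok}}(Y,\Z^k)=D$, obtain open subsets $U_0,\dots,U_D\subset Y$ whose $F_n$-translates form $D+1$ pairwise disjoint closed towers of shape $F_n$ whose union covers $Y$. Setting $V_i=\pi^{-1}(U_i)\subset X$ yields the same tower structure on $X$. Next, for each $i\in\{0,\dots,D\}$ I would construct a continuous map $\phi_i\colon X\to[0,1]^L$ whose associated orbit map $\widehat{\phi_i}(x)=(\phi_i(g\cdot x))_{g\in\Z^k}$ separates points of $X$ whose orbits pass through $V_i$. The existence of such a $\phi_i$ is the mean-dimensional input: the hypothesis $\mathrm{mdim}(X,\Z^k)<L/2$ ensures, via a relative version of the Gutman--Tsukamoto embedding machinery applied inside the finite-dimensional tower base, that a Baire-generic choice of $\phi_i$ has its orbit trace injective on the $i$-th tower.

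With the $\phi_i$ in hand, I would assemble $\phi=(\phi_0,\dots,\phi_D)\colon X\to[0,1]^{(D+1)L}$ and append an auxiliary coordinate $\psi\colon X\to[0,1]$ whose role is to distinguish points that could otherwise share the same $\phi$-orbit trace but belong to different tower indices or different levels within a tower. A natural candidate for $\psi$ is built from a partition of unity subordinate to the pulled-back Rokhlin towers, weighted so that its value along an orbit records the local tower index. The resulting map $\Phi=(\phi,\psi)\colon X\to[0,1]^{(D+1)L+1}$ then induces a continuous $\Z^k$-equivariant map $\widehat{\Phi}\colon X\to([0,1]^{(D+1)L+1})^{\Z^k}$, $x\mapsto(\Phi(g\cdot x))_{g\in\Z^k}$, and the combination of the previous two steps forces $\widehat{\Phi}$ to be injective, hence the desired embedding.

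The main obstacle is the central mean-dimensional step: proving that a Baire-generic map $\phi_i\colon X\to[0,1]^L$ has its orbit trace injective on the $i$-th pulled-back tower whenever $\mathrm{mdim}(X,\Z^k)<L/2$. This requires a careful relative Gutman--Tsukamoto type transversality argument in which the finite covering dimension of $Y$ controls the combinatorics of the tower base while the factor $1/2$ in the mean-dimension hypothesis accommodates the fact that pairs of distinct points must be separated, effectively doubling the dimensional budget. By contrast, the remaining gluing of the $D+1$ contributions using a single extra coordinate is comparatively routine, once one handles the compatibility between tower boundaries carefully enough that $\psi$ genuinely detects tower transitions along every orbit.
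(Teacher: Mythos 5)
There is a genuine gap, and it sits exactly where you declared the argument ``comparatively routine.'' In the actual proof (following Corollary 3.1 of \cite{GQS}, and the scheme reproduced in Section \ref{sec:6} of this paper), the final $+1$ coordinate is \emph{not} a partition-of-unity function recording tower data: it is an embedding of the factor itself. One first proves that for a Baire-generic $f\in C(X,[0,1]^{(D+1)L})$ the map $I_f\times\pi:X\to([0,1]^{(D+1)L})^{\Z^k}\times Y$ is an embedding, and then uses the Jaworski--Gutman-type theorem that a \emph{free} action on a space of \emph{finite covering dimension} embeds into the one-dimensional cubical shift $([0,1]^{\Z^k},\sigma)$, applied to $(Y,\Z^k)$, to convert the factor $Y$ into the extra coordinate. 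This is precisely why the hypotheses ``$(Y,\Z^k)$ free'' and ``$\dim(Y)<\infty$'' appear: your proposal uses freeness only to ``transfer the tiling structure,'' but pulling towers back along $\pi$ needs no freeness at all. More importantly, the injectivity argument structurally requires knowing $\pi(x)=\pi(y)$ \emph{first}: only then do $x$ and $y$ lie in a common level $V_i^h=\pi^{-1}(hU_i)$, which is what lets one translate by $w\in F_n$, stay in the matching levels $V_i^w$, and read off the blocks of the $\varepsilon$-embedding $G_i$ coordinatewise to conclude $d_{F_n}(h^{-1}x,h^{-1}y)<\varepsilon$. Your single coordinate $\psi$ built from a partition of unity cannot recover $\pi(x)$: distinct points with different $Y$-images can have identical $\psi$-orbit traces (the towers are open and may overlap, so ``the local tower index'' is not even well defined, and equal partition-of-unity values along an orbit do not force same-level membership, let alone equal images in $Y$). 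Without pinning down the $Y$-coordinate, two points in different levels or different towers with equal $\phi$-traces produce no contradiction, and injectivity of $\widehat{\Phi}$ is simply not forced.

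A secondary overclaim: you assert that a Baire-generic $\phi_i$ has its orbit trace ``injective on the $i$-th tower.'' The mean-dimension hypothesis does not deliver injectivity in one step; what Lemma 2.1 of \cite{GT1} gives, after choosing $\varepsilon$ by uniform continuity and then $n$ with ${\rm widim}_\varepsilon(X,d_{F_n})<L|F_n|/2$, is an $\varepsilon$-embedding $G_i:X\to([0,1]^L)^{F_n}$ perturbing $F_i(x)=(f_i(hx))_{h\in F_n}$; one cuts it to the towers by $g_i'(x)=p_h\circ G_i(h^{-1}x)$ on $\overline{V_i^h}$ and extends by Lemma A.5 of \cite{Gut1}. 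This only shows each $A_\eta=\{f: I_f\times\pi \text{ is an }\eta\text{-embedding}\}$ is open and dense; genuine injectivity of $I_f\times\pi$ arises solely from the Baire intersection $\bigcap_n A_{1/n}$, and it is injectivity of the \emph{combined} map, never of a single tower's trace. Your per-tower separation statement (``separates points of $X$ whose orbits pass through $V_i$'') is strictly stronger than what is achievable: the perturbation only separates, up to $\varepsilon$, pairs lying in the \emph{same level} of tower $i$, which again is information available only once $\pi(x)=\pi(y)$ is known --- reinforcing that the factor map (equivalently, its embedded image in the extra coordinate) is an indispensable ingredient, not an optional gluing device.
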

Now by our Rokhlin dimension on more general groups, we can now try generalizing this result(though the dimension of the cube is still far from optimal).
\vspace{0.2cm}

\noindent {\bf Theorem E}\ (=Theorem \ref{ep})\  Let $(X, G)$ be a topological dynamical system, where $X$ is a compact metric space and $G$ a countably infinite group with $|Z(F)|=\infty$ for every finite subset $F\subset G$. Let $\{F_n\}\subset G$ be a  symmetric F$\o$lner sequence such that $(G, \{F_n\})$ satisfies (WAFC) with respect to $L_G$.

Let $m\in\N$, $d\in\N\cup\{0\}$ and $(Y, G)$ a factor of $X$. If $(Y, G)$ is free, ${\rm dim}(Y)\le d$ and ${\rm mdim}(X, G)<m/2$, then
\[{\rm edim}(X,G)\le(L_G(d+1)+1)m+1\stackrel{\triangle}{=}Q,\]
or in other words, there is an embedding from $(X, G)$ into $(([0,1]^Q)^G, \sigma)$. 

In particular, this applies to the cases $G=\mathbb{H}_{2n+1}(\Z)\times G_1$, where $\mathbb{H}_{2n+1}(\Z)$ is the discrete Heisenberg group, and $G_1$ is any of a finite group, a finitely generated abelian group, a locally finite group or a countable group satisfying (WAFC) with a symmetric F$\o$lner sequence.
\vspace{0.2cm}

Finally, we want to emphatically point out that the proof of Theorem A inherits a similar philosophy of Theorem 4.6 in \cite{Sza}. However, what is different is that, instead of taking $g_1, g_2,\cdots, g_d\in G$ so that $F^{-1}F, g_1F^{-1}F, \cdots, g_dF^{-1}F$ are pairwise disjoint in Lemma 4.3 of \cite{Sza}, we take turns to find a “sufficiently large” finite subset $\F$ containing $F$ and choose $g_i$ with the disjoint condition on both $\F^{-1}\F$ and $\F\F^{-1}$. This seemingly small change will actually give us a sufficiently nice property of the $F$-marker $O$ produced this way, so that it can also be $(\F,1)$-disjoint for any given finite set $\F\supset F$. Consequently, for any fixed group elements $h_1,h_2,\cdots,h_L$ and any finite sets $F\subset \F$, we can take an $F$-marker $O$ satisfying that every image $h_jO\,(j=1,2,\cdots,L)$ is also an $F$-marker, and even an $\F$-marker at the same time(although we don't know whether there exists an $F$-marker $O$ such that  image $gO$ is an $F$-marker for every $g\in G$, but this is not an issue for us). Finally, a nice marker makes a nice structure of a Rokhlin cover, with the help of approximate groups $\{F_n\}$.

\subsection{Outline of the paper}
The paper is organized as follows. 

Section \ref{sec:2} will provide definitions, including basic notions of amenable groups, topological dynamical systems of general group actions, marker property, $C^*$-level dimensions and some concepts of the embedding problem.

In Section \ref{sec:3}, the definition of the {\it (weakly) approximate groups} is given(Definition \ref{AG}), based on which we develop the notion of the {\it weakly approximate F$\o$lner condition(WAFC)}. Direct products definitely keep (WAFC), and under a certain assumption on the actions, (WAFC) of semidirect products can also be derived, as is shown in Theorem \ref{sdp}. This is particularly the case for Heisenberg groups $\mathbb{H}_{2n+1}(\Z)$.

Section \ref{sec:4} is devoted to the {\it topological Rokhlin dimension} and the {\it (controlled) marker property} of actions of groups with (WAFC) on compact metric spaces, generalizing G. Szab$\acute{\rm o}$'s Rokhlin dimension of $\Z^k$-actions. We prove that with a mild condition on the commutativity of $G$, every such free action on a finite dimensional space has the controlled marker property(Theorem \ref{cmk}), finite topological Rokhlin dimension(Corollary \ref{frd}), and even finite strong topological Rokhlin dimension(Corollary \ref{am}).

We consider several $C^*$-flavour dimensions of these dynamical systems in Section \ref{sec:5}. In particular, under the finiteness result of the strong topological Rokhlin dimension, we prove that for every such $G$ and $G$-actions on compact metric spaces $X$, the induced actions of $G$ on $C(X)$ have {\it finite amenability dimension}(Theorem \ref{fad}), {\it finite dynamic asymptotic dimension}, {\it finite tower and fine tower dimensions}. Consequently, the corresponding crossed products all have {\it finite nuclear dimensions}(Corollary \ref{nucdim}).

In Section \ref{sec:6}, similarly to Y. Gutman's method, we make an attempt on the embedding problem, applying our so-defined topological Rokhlin dimension. Then Theorem \ref{ep} is obtained this way. 

Finally, we discuss in Section \ref{sec:7} most of the {\it (topological) Rokhlin-type properties} that once appeared in the classification program of $C^*$-crossed products arising from dynamical systems and their implications. Several questions and conjectures are given. 

\section{Preliminaries}\label{sec:2}

Throughout the article, we will use $\N$ and $\N\cup\{0\}$ to denote the sets of natural numbers and nonnegative integers respectively. For a finite set $F$, we will write $|F|$ for its cardinality. 

The Letters $G, H, \cdots$ will usually be used to represent groups, $F, M,\cdots$ for finite subsets(in a group or in a $C^*$-algebra), $g, h,\cdots$ for group elements, $U, V, O\cdots$ for open subsets of a compact space, $x,y,\cdots$ for points in $X$, and $i,j,k,\cdots$ for indices.

For a $C^*$-algebra $A$, the notation $A_{1,+}$ is used to denote the set of positive elements in $A$ with norm at most $1$. For any two elements $a, b\in A$ and $\varepsilon>0$, we will write $a=_\varepsilon b$ if $\|a-b\|\le \varepsilon$. We also set $[a,b]=ab-ba$. 

For any $D\in\Z$, to make it convenient notationally, we will use $D^{+1}$ to stand for $D+1$.

\subsection{Amenable groups and F$\o$lner sequences}
Let $G$ be a group, $K\Subset G$ a finite subset and $\varepsilon>0$ a positive real number. We shall say that a finite subset $F\Subset G$ is {\it $(K,\varepsilon)$-invariant}, if $|KF\bigtriangleup F|\le \varepsilon|F|$. By a {\it F$\o$lner sequence in} $G$, we mean an increasing sequence $\{F_n\}$ of finite subsets of $G$ such that $e\in F_n$ for all $n\ge1$, $\bigcup_{n\ge1}F_n=G$ and 
\[\lim_{n\to\infty}\frac{|gF_n\bigtriangleup F_n|}{|F_n|}=0\]
for all $g\in G$. Given that $G$ is a countable group, it is said to be {\it amenable} if $G$ admits a F$\o$lner sequence. The groups considered in the present paper will all be countable.

\subsection{Growth rate of a finitely generated group}
Let $G$ be a finitely generated group with the set of generators $\mathcal{S}$. We may assume $\mathcal{S}$ is {\it symmetric}, in the sense that $x\in\mathcal{S}$ if and only if $x^{-1}\in\mathcal{S}$. Then any element $x\in G$ can be expressed as a {\it word} over the alphabet $\mathcal{S}$
\[x=a_1a_2\cdots a_k\ \ {\rm for}\ a_i\in\mathcal{S}.\]
Let $B_n(G,\mathcal{S})$ be the finite set of elements of $G$ that can be expressed by such a word of length no more than $n$. Define $\#(n)=|B_n(G,\mathcal{S})|$. 

If there is $C, k<\infty$ with $\#(n)\le C(n^k+1)$ for all $n\ge1$, 
then we say $G$ has a {\it polynomial growth}. If for some $C>1$, $\#(n)\ge C^n$ for all $n\ge1$, then $G$ is said to have an {\it exponential growth}. A group is said to be of {\it subexponential growth} if if it is {\it not} of exponential growth. Any group of subexponential growth is amenable. See \cite{Gr} for example.


\subsection{Continuous transformation groups}
Let $G$ be a countable group and $X$ be a topological space. We say that $(X, G)$ is a {\it (continuous) transformation group} or a {\it topological dynamical system}, if $G$ acts on $X$ by homeomorphisms, or in other words, there is a continuous map
\[F: G\times X\to X\]
such that $F(e,x)=x$ and $F(g_1g_2, x)=F(g_1, F(g_2,x))$ for all $x\in X$ and $g_1,g_2\in G$. For $g\in G$, we will always denote the map $x\mapsto F(g,x)$ by $x\mapsto gx$.

A dynamical system $(X,G)$ is said to be {\it free}, if no element in $G$ except the identity leaves some point where it is, and {\it minimal}, if every $G$-orbit is dense in $X$.

Let $(X, G)$ and $(Y, G)$ be two topological dynamical systems. We say that $(Y, G)$ is an {\it extension} of $(X, G)$, if there is a surjection map $\pi: Y\to X$, which we shall call a {\it factor map}, such that $\pi$ is {\it equivariant} in the sense that
\[\pi(gy)=g\pi(y)\]
for all $y\in Y$ and $g\in G$. In this case, $(X, G)$ is called a {\it factor} of $(Y, G)$.

\subsection{Towers, castles and disjointness}
Let $(X, G)$ be a topological dynamical system. By a {\it tower} in $X$, we mean a pair $(U,S)$ consisting of a (nonempty) subset $U$ of $X$, and a finite subset $S$ of $G$, such that the family 
\[\{gU: g\in S\}\]
is mutually disjoint. The set $U$ is called the {\it base} of the tower, and $S$ the {\it shape} of the tower. For $g\in S$, the sets $gU$ are called the {\it levels} of the tower. We shall say the tower $(U, S)$ is {\it open}, if $U$ is open, {\it closed} if $U$ is closed, and {\it clopen} if $U$ is clopen. A {\it castle} in $X$ is a finite collection of towers $(U_i, S_i)_{i\in I}$ that covers $X$, i.e.,
\[X=\bigcup_{i\in I}\bigcup_{g\in S_i}gU_i.\]
Note that for any two towers, we {\it don't} need them to be mutually disjoint.

Let $M\Subset G$ and $O\subset X$ be subsets, and $d\in\N$ be a natural number. We say that $O$ is {\it $(M,d)$-disjoint}, if whenever $g_1, g_2,\cdots, g_d\in M$, we have
\[g_1O\cap g_2O\cap\cdots\cap g_dO=\varnothing.\]
If $O$ is $(M,d)$-disjoint, then $O$ is obviously $(M',d)$-disjoint for any subset $M'\subset M$.

Given a tower $(U, S)$ in $X$, the base $U$ is then $(S,1)$-disjoint by definition. See also Definition 3.1 in \cite{Sza} for details.

\subsection{The orbit capacity}
Let $(X,G)$ be a topological dynamical system and $E\subset X$ a subset. We define the {\it orbit capacity} of $E$ in the following manner.
\[{\rm ocap}(E)=\inf_{F\Subset G}\sup_{x\in X}\frac{1}{|F|}\sum_{s\in F}1_{E}(sx).\]
If $G$ is a countable amenable group with F$\o$lner sequence $\{F_n\}$, then
\[{\rm ocap}(E)=\lim_{n\to\infty}\sup_{x\in X}\frac{1}{|F_n|}\sum_{s\in F_n}1_{E}(sx).\]
That the limit exists follows from Lemma 3.2 in \cite{KS}, where the orbit capacity is called the {\it upper Banach density}.

\subsection{The marker property in $\Z$-dynamical systems}
Let $X$ be a compact Hausdorff space and $T: X\to X$ a homeomorphism of $X$ onto itself. Let $O\subset X$ be an open set and $n\in\N$ a natural number. We say that $O$ is an {\it (open) $n$-marker}, if

$\bullet$ $O\cap T^i(O)=\varnothing$ for every $i\in\{0,1\cdots,n-1\}$;

$\bullet$ $\bigcup_{i\in\Z}T^i(O)=X$.

\noindent The system $(X, T)$ is said to have {\it the marker property}, if there exists an (open) $n$-marker for every $n\in\N$. See Definition 5.1 in \cite{Gut1} for reference. 

The marker property is stable under extensions, that is, if $(X,T)$ has the marker property, then so does any extension $(Y, T')$ of $(X,T)$.

\subsection{Amenability dimensions of $(X, G, \af)$ and $(C(X), G, \overline{\af})$}
Let $G$ be a countable group, $X$ a compact metric space and $(X, G)$ be a free dynamical system. Let $d\in\N\cup\{0\}$. 

Denote by $\Delta(G)$ the set of probability measures on $G$, and $\Delta_d(G)$ the set of probability measures on $G$ whose support has cardinality at most $d+1$.

The {\it amenability dimension} ${\rm dim}_{\rm am}(X,G)$(or ${\rm dim}_{\rm am}(\af)$) of the action $G\curvearrowright X$ is the smallest integer $d\ge0$ satisfying the property that, for every finite set $F\Subset G$ and $\varepsilon>0$, there is a continuous map $\varphi: X\to\Delta_d(G)$ such that for all $s\in F$,
\[\sup_{x\in X}\|\varphi(sx)-s\varphi(x)\|_1<\varepsilon.\]

Denote the induced action of $G$ on $C(X)$ by $\overline{\af}$. Then the {\it amenability dimension} of the action $G\curvearrowright C(X)$ is the smallest natural number $d$ with the following property.

For any $\varepsilon>0$, any finite subsets $M\Subset G$ and $F\Subset C(X)$, there exists finitely supported maps
\[\mu^{(l)}: G\to C(X)_{1,+}, g\mapsto \mu_g^{(l)}\ {\rm for}\ l=0,1,\cdots,d\]
satisfying that

(a) $(\sum_{0\le l\le d}\sum_{g\in G}\mu_g^{(l)})\cdot a=_\varepsilon a$ for any $a\in F$;

(b) $\mu_g^{(l)}\mu_h^{(l)}=0$ for all $l=0,1,\cdots,d$ and $g\ne h$ in $G$;

(c) $\|\sum_{h\in E}(\overline{\af}(\mu_h^{(l)})-\mu_{gh}^{(l)})\cdot a\|\le\varepsilon$ for all $a\in F$, $l=0,1,\cdots,d$, $g\in M$ and every $E\Subset G$;

(d) $\|[\sum_{g\in E}\mu_g^{(l)},a]\|\le\varepsilon$ for all $a\in F$, $l=0,1,\cdots,d$ and every $E\Subset G$.

\noindent If no such integer $d$ exists, then we set ${\rm dim}_{\rm am}(\overline{\af})=\infty$. it is known that in our setting, ${\rm dim}_{\rm am}(\af)\le d$ if and only if ${\rm dim}_{\rm am}(\overline{\af})\le d$. See Definition 5.1 in \cite{K}, Definition 6.1 and Lemma 8.4 in \cite{SWZ} for details and proofs.

\subsection{Dynamic asymptotic dimension of $(X,G)$}
Let $(X, G)$ be a topological dynamical system. The {\it dynamic asymptotic dimension ${\rm dad}(X,G)$} of $(X,G)$ is the smallest integer $d\ge0$ with the property that, for every finite set $E\Subset G$, there are a finite set $F\Subset G$ and an open cover $\mathcal{U}$ of $X$ of cardinality $d+1$ such that, for all $x\in X$ and $s_1,s_2,\cdots,s_n\in E$, if the points 
\[x, s_1x, s_2s_1x,\cdots,s_n\cdots s_1x\]
are contained in a common member of $\mathcal{U}$, then $s_n\cdots s_1\in F$. See Definition 5.3 in \cite{K}.

\subsection{Tower dimension and fine tower dimension}
Let $(X, G)$ be a topological dynamical system and $E\Subset G$ a finite subset. A collection of towers $\{(V_i,S_i)\}_{i\in I}$ covering $X$ is said to be {\it $E$-Lebesgue}, if for every $x\in X$, there are an index $i\in I$ and a $t\in S_i$ such that $x\in tV_i$ and $Et\subset S_i$.

Given a family $\mathcal{C}$ of subsets of a set, its {\it chromatic number} is defined as the least $d\in\N$ such that there is a partition of $\mathcal{C}$ into $d$ subcollections each of which is pairwise disjoint.

The {\it tower dimension} of $(X,G)$, denoted by ${\rm dim}_{\rm tow}(X,G)$, is the smallest number $d\in\N\cup\{0\}$ with the property that for every finite set $E\Subset G$, there is an $E$-Lebesgue collection of open towers $\{(V_i, S_i)\}_{i\in I}$ covering $X$ such that the family $\{S_iV_i\}_{i\in I}$ has chromatic number at most $d+1$. If no such $d$ exists, we set ${\rm dim}_{\rm tow}(X,G)=\infty$.

The {\it fine tower dimension} ${\rm dim}_{\rm ftow}(X,G)$ of $(X,G)$ is the least integer $d\ge0$ with the property that for every finite set $E\Subset G$ and $\delta>0$, there is an $E$-Lebesgue collection of open towers $\{(V_i, S_i)\}_{i\in I}$ covering $X$ such that ${\rm diam}(sV_i)<\delta$ for all $i\in I$ and $s\in S_i$, and the family $\{S_iV_i\}_{i\in I}$ has chromatic number at most $d+1$. If no such $d$ exists, we set ${\rm dim}_{\rm ftow}(X,G)=\infty$. See Definition 4.3, Definition 4.10 in \cite{K}.

\subsection{Nuclear dimension}
Let $A$ be a $C^*$-algebra. Recall that a linear map $\varphi: A\to B$ is {\it order zero}, if $\varphi(a_1)\varphi(a_2)=0$ for all $a_1,a_2\in A$ with $a_1a_2=0$.

The {\it nuclear dimension} of $A$, denoted by ${\rm dim}_{nuc}(A)$, is the smallest integer $d\ge0$ such that for every finite set $F\Subset A$ and $\varepsilon>0$, there are finite-dimensional $C^*$-algebras $B_0, B_1,\cdots, B_d$ and linear maps
\[A\stackrel{\varphi}{\longrightarrow}B_0\oplus B_1\oplus\cdots\oplus B_d\stackrel{\psi}{\longrightarrow}A\]
such that $\varphi$ is c.p.c., $\psi|_{B_i}$ is c.p.c. and order zero for each $i=0,1,\cdots,d$, and
\[\|\psi\circ\varphi(a)-a\|<\varepsilon\]
for all $a\in F$. If no such $d$ exists, we then write ${\rm dim}_{nuc}(A)=\infty$.

As we will only deal with crossed products, we note that the Toms-Winter conjecture holds for every simple crossed product arising from a free and minimal action of a countably infinite group $G$ on a compact metric space $X$ with small boundary property.  That is, for this class of dynamical systems, finite nuclear dimension implies the {\it $\mathcal{Z}$-stability} and the classifiability by the {\it Elliott invariant}, where $\mathcal{Z}$ is the Jiang-Su algebra. Readers who are interested may see Corollary 9.5 in \cite{KS} for reference.

\subsection{Comparison}
Let $(X,G)$ be a topological dynamical system. We say that $(X,G)$ has {\it comparison}, if for every nonempty open subsets $A,B\subset X$ satisfying $\mu(A)<\mu(B)$ for every $G$-invariant regular Borel probability measure $\mu$ on $X$, and every closed subset $C\subset A$, there exists a finite collection $\mathcal{U}$ of open sets of $X$ which covers $C$ and $s_U\in G$ for every $U\in\mathcal{U}$, such that the images $s_UU$ for $U\in\mathcal{U}$ are pairwise subsets of $B$. See Definition 3.2 in \cite{K}.

\subsection{The $m$-dimensional cubical shifts}
Let $K$ be a compact metric space and $G$ be a countable group. Let $X=K^G$, which is clearly a compact metric space as well. Define an action $\sigma: G\curvearrowright X$ by
\[(\sigma_g(x))_h=x_{hg}.\]
If $K=[0,1]^m$ is the $m$-dimensional cube for some $m\in\N$, then we say $(([0,1]^m)^G, \sigma)$ is an {\it $m$-dimensional cubical shift}. We will also say $([0,1]^m)^G$ is an $m$-dimensional cubical shift, if the action $\sigma$ is understood.

\subsection{Lebesgue covering dimension}
Let $X$ be a topological space and $\mathcal{U}$ is a finite open cover of $X$. Define the {\it order} ${\rm ord}(\mathcal{U})$ of $\af$ by
\[{\rm ord}(\mathcal{U})=-1+\max_{x\in X}\sum_{O\in\mathcal{U}}1_O(x).\]
Let 
\[D(\mathcal{U})=\min_{\mathcal{U}'\ {\rm refines}\ \mathcal{U}}{\rm ord}(\bt),\]
where by $\mathcal{U}'$ {\it refines} $\mathcal{U}$, we mean that $\mathcal{U}'$ is a finite open cover satisfying that every element of $\mathcal{U}'$ is contained in some element of $\mathcal{U}$. The {\it Lebesgue covering dimension} is given by
\[{\rm dim}(X)=\sup_{\mathcal{U}}D(\mathcal{U}),\]
where $\mathcal{U}$ runs over all finite open covers of $X$. It is well known that every compact metric space of dimension $d$ embeds into the {\it $2d+1$-dimensional cube} $[0,1]^{2d+1}$.

Let $\mathcal{U}$ and $\mathcal{V}$ be two open covers of $X$. The open cover $\mathcal{U}\vee\mathcal{V}$ is given by
\[\mathcal{U}\vee\mathcal{V}=\{U\cap V: U\in\mathcal{U}, V\in\mathcal{V}\}.\]
If $\varphi: X\to X$ is a homeomorphism, then $\varphi(\mathcal{U})=\{\varphi(U): U\in\mathcal{U}\}$.

\subsection{The embedding dimension}
Let $(X, G)$ and $(Y, G)$ be topological dynamical systems. If there exists an equivariant embedding of $X$ into $Y$, then we say $(X,G)$ {\it embeds into} $(Y, G)$, and denote $(X, G)\hookrightarrow (Y,G)$.

For the dynamical system $(X, G)$, its {\it embedding dimension} is a natural number ${\rm edim}(X, G)$ given by
\[{\rm edim}(X, G)=\min\{Q\in\N: (X, G)\hookrightarrow (([0,1]^Q)^G, \sigma)\}.\]
If $(X, G)$ does not embed into $([0,1]^Q)^G$ for any $Q\in\N$, we then write ${\rm edim}(X, G)=\infty$.

\subsection{The $\varepsilon$-embeddings and induced maps}
Let $(X,d)$ be a metric space and $Y$ a topological space. Let $f:X\to Y$ be a continuous map and $\varepsilon>0$ a positive real number. We shall say $f$ is an {\it $\varepsilon$-embedding}, if for every $y\in Y$, 
\[{\rm diam}(f^{-1}(y))<\varepsilon,\]
where the diameter of a subset $E\subset X$ is given by ${\rm diam}(E)=\sup_{x,y\in E}d_X(x,y)$.

For a topological dynamical system $(X,G)$ and continuous map $f: X\to[0,1]^m$, the induced map $I_f: X\to (([0,1])^m)^G$ is defined as
\[(I_f(x))_g=f(gx).\]
It is an equivariant map from $(X, G)$ to $((([0,1])^m)^G, \sigma)$.

\section{Discrete groups with F$\o$lner sequences of approximate groups}\label{sec:3}

\subsection{Approximate groups, (AFC) and (SAFC)}
\begin{df}\label{AG}
Let $G$ be a group, $A\Subset G$ a finite subset and $L>0$ a natural number. We call $A$ is a {\it (weakly) approximate (sub)group} in $G$ with respect to $L>0$, if 

(i) The group identity $e_G\in A$;

(ii) $A^{-1}A$ is covered by a union of $L$ left translates of $A$ by elements in $G$, i.e., there are group elements
\[g_1, g_2,\cdots, g_L\in G\] 
(depending on $A$ itself) with
\[A^{-1}A\subset \bigcup_{1\le i\le L} g_i\cdot A.\]
If, in addition, such elements $g_i\,(i=1,\cdots,L)$ could be chosen from $A$, then we say $A$ is a {\it strongly approximate (sub)group}.

In particular, since $L$ depends on $A$, we denote the smallest one of such positive constants by $L_A$. 
\end{df}

\begin{rem}
Approximate groups was originally defined to be a {\it symmetric} finite subset containing the identity which can be covered by $L$ left(or right) translates of itself, see Definition 3.8 in \cite{T} or Definition 1.2 in \cite{CHT} for reference. This is also why we call $A$ in Definition \ref{AG} a {\it weakly} approximate group, since we don't necessarily require the symmetry condition of $A$ or $\{g_1,g_2,\cdots,g_L\}$ for our use.
\end{rem}

\begin{prop}\label{const}
Let $A\Subset G$ be a finite subset such that $A$ and $A^{-1}$ are both approximate groups in $G$, with the constants $L_A$ and $L_{A^{-1}}$ respectively. Let $B=A^{-1}\cup A$. Then $B$ is a symmetric approximate group in $G$ with its constant
\[L_B\leq 2L_AL_{A^{-1}}+L_A+L_{A^{-1}}.\]
\end{prop}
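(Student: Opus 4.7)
The plan is to begin by noting that $B = A \cup A^{-1}$ is manifestly symmetric, i.e.\ $B^{-1} = B$, so the covering problem reduces to covering
\[
B^{-1}B \;=\; BB \;=\; AA \,\cup\, AA^{-1} \,\cup\, A^{-1}A \,\cup\, A^{-1}A^{-1}
\]
by left translates of $B$. I would handle each of the four pieces separately, at each stage covering by translates of either $A$ or $A^{-1}$, both of which are subsets of $B$.

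Two of the four pieces are immediate from the hypothesis. By definition of $L_A$, the set $A^{-1}A$ is covered by $L_A$ left translates of $A \subset B$. Applying the hypothesis to $A^{-1}$ and observing that $(A^{-1})^{-1}(A^{-1}) = AA^{-1}$, the set $AA^{-1}$ is covered by $L_{A^{-1}}$ left translates of $A^{-1} \subset B$.

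For the two diagonal pieces $AA$ and $A^{-1}A^{-1}$ I would exploit that $e_G \in A$ (and hence $e_G \in A^{-1}$), which yields the inclusions $A \subset AA^{-1}$ and $A^{-1} \subset A^{-1}A$. Writing $AA^{-1} \subset \bigcup_{j=1}^{L_{A^{-1}}} h_j A^{-1}$ and $A^{-1}A \subset \bigcup_{i=1}^{L_A} g_i A$ with $h_j, g_i \in G$, the first inclusion gives $A \subset \bigcup_j h_j A^{-1}$, so right-multiplying by $A$ and invoking the covering of $A^{-1}A$ produces
\[
AA \;\subset\; \bigcup_{j} h_j (A^{-1}A) \;\subset\; \bigcup_{i,j} h_j g_i A,
\]
a union of $L_A L_{A^{-1}}$ left translates of $A \subset B$. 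The dual argument, starting from $A^{-1} \subset A^{-1}A \subset \bigcup_i g_i A$ and right-multiplying by $A^{-1}$ before invoking the covering of $AA^{-1}$, covers $A^{-1}A^{-1}$ by another $L_A L_{A^{-1}}$ left translates of $A^{-1} \subset B$. Summing the four counts gives exactly $L_A + L_{A^{-1}} + 2L_A L_{A^{-1}}$, as claimed.

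I do not anticipate any genuine obstacle here; the only subtlety is that one must not try to cover $AA$ directly via $AA \subset (A^{-1}A)A \subset \bigcup_i g_i (AA)$, which is circular. The detour through $AA^{-1}$ (respectively $A^{-1}A$) for the diagonal pieces is precisely what breaks this circularity, and it is there that the factor of $2$ in the bound arises.
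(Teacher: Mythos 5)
Your proof is correct and follows essentially the same route as the paper's: the same four-piece decomposition $B^2 = AA \cup AA^{-1} \cup A^{-1}A \cup A^{-1}A^{-1}$, the same use of $e_G\in A$ to reduce the diagonal pieces to $AA^{-1}A$ and $A^{-1}AA^{-1}$ (your ``right-multiply the inclusion $A\subset AA^{-1}$'' is the identical manipulation phrased differently), and the same tally of $2L_AL_{A^{-1}}+L_A+L_{A^{-1}}$ translates of $A\cup A^{-1}\subset B$. No gaps.
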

\begin{proof}
It is clear that $B$ is symmetric in the sense that $B=B^{-1}$. Note that
\begin{align*}
B^2=&\,B^{-1}B=(A^{-1}\cup A)(A^{-1}\cup A)\\
=&\,(A^{-1}A^{-1})\cup (AA)\cup (A^{-1}A)\cup (AA^{-1}).
\end{align*}
Since the identity element $e_G\in A$, we have
\begin{align*}
AA&\subset AA^{-1}A\subset \bigcup_{1\le j\le L_{A^{-1}}} g_j' A^{-1}A\subset \bigcup_{1\le i\le L_A}\bigcup_{1\le j\le L_{A^{-1}}}g_{j}'g_i A,\\
A^{-1}A^{-1}&\subset A^{-1}AA^{-1}\subset \bigcup_{1\leq i\leq L_A} g_iAA^{-1}\subset \bigcup_{1\le i\le L_A}\bigcup_{1\le j\le L_{A^{-1}}} g_ig_j'A^{-1}.
\end{align*}
Also note that $A\subset B$ and $A^{-1}\subset B$. Therefore, 
\begin{align*}
B^2&\subset \left(\bigcup_{1\le i\le L_A}\bigcup_{1\le j\le L_{A^{-1}}}g_{j}'g_i A\right)\cup\left(\bigcup_{1\le i\le L_A}\bigcup_{1\le j\le L_{A^{-1}}}g_ig_j'A^{-1}\right)\cup\left(\bigcup_{1\le i\le L_A} g_iA\right)\cup\left(\bigcup_{1\le j\le L_{A^{-1}}} g_j'A^{-1}\right)\\
&\subset \left(\bigcup_{1\le i\le L_A}\bigcup_{1\le j\le L_{A^{-1}}}g_{j}'g_i B\right)\cup\left(\bigcup_{1\le i\le L_A}\bigcup_{1\le j\le L_{A^{-1}}}g_ig_j'B\right)\cup\left(\bigcup_{1\le i\le L_A} g_iB\right)\cup\left(\bigcup_{1\le j\le L_{A^{-1}}} g_j'B\right).
\end{align*}
This shows that $L_B\leq 2L_AL_{A^{-1}}+L_A+L_{A^{-1}}$.
\end{proof}

\begin{df}\label{AFC}
Let $G$ be a discrete countable group, $F_n\Subset G\,(n=1,2,\cdots)$ a sequence of finite subsets and $L_G$ a natural number. 

We say that $(G, \{F_n\})$ satisfies the {\it weakly approximate F$\o$lner condition(WAFC) with respect to $L_G$}, if $\{F_n\}$ is a  F$\o$lner sequence satisfying:

(i) For every $n\ge1$, the finite subset $F_n$ is an approximate group in $G$ with respect to $L_{F_n}$ in the sense of Definition \ref{AG};

(ii) The sequence of positive integers $\{L_{F_n}\}_{n\ge1}$ is upper bounded: $L_{F_n}\le L_G\,(n\ge1)$.

\vspace{0.2cm}

If $(G, \{F_n\})$ satisfies (WAFC) and in addition, for all $n,k\ge1$, 
\[F_{n+k}\subset F_n\cdot F_k,\]
then $(G, \{F_n\}$ is said to have {\it approximate F$\o$lner condition}, and is written as (AFC).

\vspace{0.2cm}
Finally, we shall say $(G, \{F_n\})$ has the {\it strongly approximate F$\o$lner condition}, and denoted by (SAFC) instead, if it satisfies (AFC) with the condition (i) being replaced by the following one:

(i)$'$ Every $F_n$ is a strongly approximate group in the sense of Definition \ref{AG}.
\end{df}

\begin{rem}
Although it seems that the definitions of (AFC) and (SAFC) are artificial, all the results in the paper are in fact based on the assumption of (WAFC). We need (AFC) and (SAFC) only for Theorem \ref{sdp}, i.e., to ensure that every such semidirect product has a F$\o$lner sequence of approximate groups.
\end{rem}

\begin{eq}\label{eg}
\quad\par
$\bullet$ Every finite group and every $\Z^m\,(m\ge1)$ satisfies (SAFC);

$\bullet$ Every finitely generated abelian group satisfies (SAFC) (also see Corollary \ref{dp});

$\bullet$ Every locally finite group $G=\bigcup_{n\ge1}G_n$ satisfies (WAFC) and condition (i)$'$;

$\bullet$ All the conditions of (WAFC), (AFC) and (SAFC) inherit under group isomorphisms.
\end{eq}

Recall that for a group $G$, we say that it is {\it nilpotent}, if there exists a series of normal subgroups
\[\{e_G\}=G_0\vartriangleleft G_1 \vartriangleleft\cdots\vartriangleleft G_n=G,\]
where $[G,G_{i+1}]\leq G_i$. 

If a group $G$ has a finite index subgroup $H\le G$ which is nilpotent, then we call $G$ {1tually nilpotent}.

Now suppose that $G$ is finitely generated with both $(G, \{F_n\})$ and $(G, \{F_n^{-1}\})$ satisfying (WAFC). By Proposition \ref{const}, $L_{F_n^{-1}\cup F_n}$ is bounded from above. From Corollary 11.2 of \cite{BGT}, there hence exists a constant $K'$ with respect to $K=\max\{L_{F_n^{-1}\cup F_n}: n\ge1\}$, a (sufficiently large) natural number $N\ge1$ and group element 
\[g_1^{(N)}, g_2^{(N)}, \cdots, g_K^{(N)}\in G\]
such that 
\[(F_N^{-1}\cup F_N)^2\subset \bigcup_{1\le i\le K} g_i^{(N)}(F_N^{-1}\cup F_N)\ \ {\rm and}\ \ B_{K'}(G, S)\subset F_N^{-1}\cup F_N,\]
where $S$ is the (symmetric) set of generators in $G$ and $B_{K'}(G, S)$ is the set of elements of $G$ with length no longer than $K'$. In particular, this follows that
\[|(F_N^{-1}\cup F_N)^2|\leq K\cdot|F_N^{-1}\cup F_N|\ \ {\rm and}\ \ B_{K'}(G, S)\subset F_N^{-1}\cup F_N,\]
and therefore $G$ is of polynomial growth. Consequently, this is equivalent to saying that $G$ is virtually nilpotent, see \cite{G}. To make a summary, we have the following proposition. 
\begin{prop}
Let $G$ be a discrete countable group with $(G, \{F_n\})$ satisfying (WAFC) for a symmetric F$\o$lner sequence $\{F_n\}$. If $G$ is finitely generated, then $G$ is virtually nilpotent. In particular, $G$ has polynomial growth, and of course, subexponential growth.
\end{prop}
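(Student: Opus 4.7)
The plan is to package the (WAFC) hypothesis into a single finite subset with uniformly bounded doubling which also contains a large ball in the Cayley graph of $G$, and then invoke the Breuillard--Green--Tao structure theorem together with Gromov's polynomial growth theorem. The proposition is essentially a direct transcription of that chain of implications, and the excerpt's paragraph preceding it already outlines the idea.

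First I would use the symmetry of the F\o lner sequence to simplify: since $F_n = F_n^{-1}$, we have $F_n^{-1}\cup F_n = F_n$, so Proposition \ref{const} is not even needed, and the (WAFC) condition reads
\[F_n^2 \;=\; F_n^{-1}F_n \;\subset\; \bigcup_{1\le i\le L_G} g_i^{(n)} F_n,\]
from which $|F_n^2|\le L_G|F_n|$ uniformly in $n$. Thus every $F_n$ has doubling constant bounded by the \emph{same} constant $L_G$, independently of $n$.

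Next, since $G$ is finitely generated with symmetric generating set $S$, and $\{F_n\}$ is increasing with $\bigcup_n F_n = G$, every finite Cayley ball $B_r(G,S)$ is eventually absorbed into some $F_N$. Given the constant $K' = K'(L_G)$ produced by Corollary~11.2 of \cite{BGT}, I would therefore choose $N$ large enough that $F_N \supset B_{K'}(G,S)$. That corollary then applies directly to $A = F_N$: a finite symmetric set containing the identity, with $|A^2|\le L_G|A|$ and containing $B_{K'}(G,S)$, forces $G$ to be virtually nilpotent.

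Finally, Gromov's theorem \cite{G} upgrades virtual nilpotence of a finitely generated group to polynomial growth, which is \emph{a fortiori} subexponential growth. The only point requiring care is the order of the quantifiers: the radius $K'$ is a function of the doubling constant $L_G$, so $N$ must be chosen \emph{after} $K'$ is fixed; but this is harmless precisely because the doubling bound $|F_n^2|\le L_G|F_n|$ is uniform over all $n$, leaving us free to enlarge $N$ as much as we wish. I do not anticipate any real obstacle beyond this bookkeeping — the substance of the argument is entirely carried by the cited results of Breuillard--Green--Tao and Gromov.
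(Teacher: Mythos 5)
Your proposal is correct and follows essentially the same route as the paper: it extracts the uniform doubling bound $|F_N^2|\le L_G|F_N|$ from (WAFC), applies Corollary 11.2 of \cite{BGT} to an $F_N$ chosen (after fixing $K'=K'(L_G)$) to contain the ball $B_{K'}(G,S)$, and closes with Gromov's theorem \cite{G}. Your only deviation is noting that the symmetry hypothesis $F_n=F_n^{-1}$ makes the symmetrization $F_n^{-1}\cup F_n$ and Proposition \ref{const}, which the paper's discussion invokes, redundant --- a harmless streamlining of the same argument.
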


\begin{rem}
Suppose that $G$ is not finitely generated. One may believe that, for every finitely generated subgroup $H<G$, $H$ is virtually nilpotent and of polynomial growth, and consequently every finitely generated subgroup of $G$ has subexponential growth.

However, let's take $H<G$ be a finitely generated subgroup. Then 
\[\{H_n=F_n\cap H: n\ge1\}\]
is an increasing sequence of symmetric finite subset of $H$ with $\bigcup_{n\ge1}H_n=H$. It is known from \cite{BGT} that, if there is a constant $K$ such that 
\[|H_n^2|\le|F_n^2\cap H|<K|F_n\cap H|=K|H_n|,\]
then $H$ is virtually nilpotent.  However, it is not straightforward that such a constant $K$ exists, even if for every $n\ge1$, $|F_n^2|<L_G|F_n|$, since it may happen that there exist elements $a,b\in F_n\setminus H$ with $ab\in H$ and
\[\limsup_{n\to\infty}\frac{|\{ab\in F_n^2\cap H: a,b\in F_n\setminus H\}|}{|F_n\cap H|}=\infty.\]
Therefore, we ask the following question.
\end{rem}
\begin{qn}\label{fgg}
Let $G$ be a discrete countably infinite group with $(G, \{F_n\})$ satisfying (WAFC) where $\{F_n\}\subset G$ is a symmetric F$\o$lner sequence. Does every finitely generated subgroup $H<G$ has subexponential growth?
\end{qn}

\subsection{Semidirect products of groups with (SAFC)}
In \cite{Ja}, D. Janzen investigates that under what kind of conditions so that there exists a “nice” shape of F$\o$lner sequence in the semidirect product of two amenable groups. In particular, enlightened by Theorem 3 in \cite{Ja}, we have the following Theorem on the problem of when the semidirect product of two amenable groups satisfies (AFC).
\begin{thm}\label{sdp}
Let $N$ and $H$ be countable groups. Let $\af: H\to {\rm Aut}(N)$ be an action of $H$ on $N$, and $G=N\rtimes_{\af} H$ be the (unimodular) semidirect product under $\af$. 

Suppose that $N$ and $H$ satisfy (SAFC) with $\{N_l\}\subset N$ and $\{H_l\}\subset H$ being the corresponding symmetric F$\o$lner sequences.
If there exists a positive integer $M\ge1$ and a function $\xi: \N\times\N\to \N$ such that the following properties hold:

(1) For all $(i,j)\in \N\times\N$, $\af_{H_i}(N_j)\subset N_{\xi(i,j)}$ and $\xi(i,i)\leq i\cdot M$;

(2) For every $(a,b)\in G$ and $\varepsilon>0$, there is an integer $K\ge1$ such that for all $l\ge K$,
\[|a\af_b(N_l)\cap N_l|>(1-\varepsilon)|N_l|;\]

(3) For every $l_1,l_2\ge1$, every $x\in N_{l_1+l_2}$ and $b\in H_{l_1}$,
\[N_{l_1}x\cap\af_{b}(N_{l_2})\neq\varnothing,\]
then $G$ satisfies (AFC). 
\end{thm}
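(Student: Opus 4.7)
The plan is to take as candidate F$\o$lner sequence the product sets
\[F_n = N_n \rtimes H_n = \{(a, b) \in G : a \in N_n,\ b \in H_n\},\]
and verify the three requirements of (AFC): that $\{F_n\}$ is a F$\o$lner sequence in $G$, that $F_{n+k} \subset F_n F_k$ for all $n, k \geq 1$, and that each $F_n$ is an approximate subgroup with constant uniformly bounded in $n$.

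For the F$\o$lner property, I will compute $(c, d) F_n = \{(c\alpha_d(a), db) : a \in N_n, b \in H_n\}$ for each $(c, d) \in G$ and estimate the symmetric difference via a product-style decomposition: the F$\o$lner property of $\{H_l\}$ controls the $H$-coordinate, and condition (2), which asserts $|c\alpha_d(N_n) \cap N_n|/|N_n| \to 1$, controls the $N$-coordinate. For the nesting $F_{n+k} \subset F_n F_k$, given $(x, y) \in F_{n+k}$, I first decompose $y = y_1 y_2$ with $y_1 \in H_n$ and $y_2 \in H_k$ using (AFC) of $H$, and then invoke condition (3) with $l_1 = n$, $l_2 = k$, $b = y_1$ to produce $a \in N_n$, $a' \in N_k$ such that $ax = \alpha_{y_1}(a')$, equivalently $x = a^{-1} \alpha_{y_1}(a')$. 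Symmetry of $N_n$ gives $a^{-1} \in N_n$, so $(x, y) = (a^{-1}, y_1)(a', y_2) \in F_n F_k$.

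The main technical obstacle is the approximate group property $F_n^{-1} F_n \subset \bigcup_{i=1}^L (c_i, d_i) F_n$ with $L$ independent of $n$. A generic element of $F_n^{-1} F_n$ has the form $(\alpha_{b_1^{-1}}(a_1^{-1} a_2),\ b_1^{-1} b_2)$ with $a_i \in N_n$, $b_i \in H_n$. Strong approximateness of $H_n$ yields $H_n^2 \subset \bigcup_i h_i H_n$ with $h_i \in H_n$ and $L_G$ terms, which supplies candidates $d_i = h_i$ for the $H$-coordinate. For the $N$-coordinate, I decompose $a_1^{-1} a_2 \in N_n^2$ using strong approximateness of $N_n$, apply condition (1)'s bound $\xi(n,n) \leq nM$ to place the image $\alpha_{b_1^{-1}}(a_1^{-1} a_2)$ within $N_{nM}^2$, and then use strong approximateness of $N_{nM}$ to cover the latter by $L_G$ translates. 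The delicate final step is to assemble these pieces into a single cover of $F_n^{-1} F_n$ by semidirect translates of the form $(c_{i, k}, h_i) F_n = c_{i, k} \alpha_{h_i}(N_n) \rtimes h_i H_n$: since each image $\alpha_{h_i}(N_n)$ sits inside $N_{nM}$ as a potentially smaller piece, aligning the two covers with a bounded total count will require a further application of condition (3) to trade $N_{nM}$-translates for $\alpha_{h_i}(N_n)$-translates at uniformly bounded multiplicative cost. This alignment is what I expect to be the subtlest part of the argument.
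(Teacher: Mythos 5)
Your candidate sequence $F_n=N_n\times H_n$, the F\o lner verification through condition (2), and the nesting $F_{n+k}\subset F_nF_k$ through condition (3) all coincide with the paper's argument and are correct. The gap is precisely at the step you yourself flag as delicate, and the tool you propose for it --- a ``further application of condition (3)'' --- cannot close it. After covering $H_n^2\subset\bigcup_{i\le L_H}h_iH_n$ (note the constant here is $L_H$, not $L_G$) and reducing the $N$-coordinate to $N_{nM}^2$, you must cover sets of the form $cN_{nM}\times h_iH_n$ by boundedly many translates $(c_k,h_i)F_n=c_k\alpha_{h_i}(N_n)\times h_iH_n$, i.e.\ cover $cN_{nM}$ by boundedly many translates $c_k\alpha_{h_i}(N_n)$. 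The only containment condition (3) produces is $N_{l_1+l_2}\subset N_{l_1}\cdot\alpha_b(N_{l_2})$ for $b\in H_{l_1}$ (from $a_1x=\alpha_b(a_2)$, so $x=a_1^{-1}\alpha_b(a_2)$, using symmetry of $N_{l_1}$); this writes the big set as a union of $|N_{l_1}|$ translates of $\alpha_b(N_{l_2})$, and $|N_{l_1}|$ is unbounded in $n$ unless $l_1$ stays fixed --- but a fixed $l_1$ forces $h_i\in H_{l_1}$, which fails because the $h_i$ range over all of $H_n$. So no uniformly bounded multiplicative cost is available from (3), and indeed the paper invokes (3) only for the nesting $F_{l_1+l_2}\subset F_{l_1}F_{l_2}$, never in the approximate-group step.

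The correct resolution, which your setup is one observation away from, is to \emph{rewrite the twisted sets exactly rather than cover them}: since $(c,h_i)(x,y)=(c\,\alpha_{h_i}(x),h_iy)$, one has the identity $cN_n\times h_iH_n=(c,h_i)\cdot\bigl(\alpha_{h_i^{-1}}(N_n)\times H_n\bigr)$. Strong approximateness of $H_n$ is what makes this usable: it puts $h_i\in H_n$, hence $h_i^{-1}\in H_n$ by symmetry, so a \emph{second} application of condition (1) gives $\alpha_{h_i^{-1}}(N_n)\subset N_{\xi(n,n)}\subset N_{nM}\subset N_n^M$, where the last inclusion iterates the (AFC) nesting $N_{l_1+l_2}\subset N_{l_1}N_{l_2}$. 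The enlarged base $N_n^M\times H_n$ is now untwisted, so it is covered by the $L_N^{M-1}$ translates $\bigl(\prod_{p=1}^{M-1}n_{k_p},e_H\bigr)F_n$ with trivial $H$-component, for which no alignment problem arises. Chaining this with $N_{nM}^2\subset N_n^{2M}\subset\bigcup_{i_1,\dots,i_{2M-1}}n_{i_1}\cdots n_{i_{2M-1}}N_n$ and the $H$-cover yields $L_{F_n}\le L_N^{2M-1}\cdot L_H\cdot L_N^{M-1}=L_N^{3M-2}L_H$, uniformly in $n$. In short: replace your (3)-based alignment by the exact conjugation identity plus condition (1) applied to $h_i^{-1}$, and the proof goes through as in the paper.
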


\begin{proof}
For every $l\ge1$, define
\[F_l=N_l\times H_l.\]
Then $F_l$ are finite subsets of $G$. We now show that $\{F_l\}_{l\ge1}$ is a F$\o$lner sequence, that is, for any $(a,b)\in N\rtimes_\af H$, 
\[\lim_{l\to\infty}\frac{|(a,b)F_l\bigtriangleup F_l|}{|F_l|}=0.\]
Since
$|(a,b)F_l\bigtriangleup F_l|=|(a,b)F_l|+|F_l|-2|(a,b)F_l\cap F_l|=2(|F_l|-|(a,b)F_l\cap F_l|)$, it suffices to show that
\[\lim_{l\to\infty}\frac{|(a,b)F_l\cap F_l|}{|F_l|}=1.\]
Note that 
\begin{align*}
(a,b)(N_l\times H_l)\cap(N_l\times H_l)=&\,(a\af_b(N_l)\times bH_l)\cap(N_l\times H_l)\\
=&\,(a\af_b(N_l)\cap N_l)\times(bH_l\cap H_l),
\end{align*}
we then see that, for every $\varepsilon>0$, there is an integer $K\ge1$, such that
\[\frac{|(a,b)F_l\cap F_l|}{|F_l|}=\frac{|a\af_b(N_l)\cap N_l|\cdot|bH_l\cap H_l|}{|N_l|\cdot|H_l|}>(1-\varepsilon)\frac{|bH_l\cap H_l|}{|H_l|}\]
whenever $l\ge K$. Since $\{H_l\}$ is a F$\o$lner sequence, letting $l\to\infty$ yields that
\[\liminf_{l\to\infty}\frac{|(a,b)F_l\cap F_l|}{|F_l|}\ge 1-\varepsilon.\]
Since $\varepsilon>0$ is chosen arbitrarily and $|(a,b)F_l\cap F_l|\leq |F_l|$ always holds, we conclude
\[\lim_{l\to\infty}\frac{|(a,b)F_l\cap F_l|}{|F_l|}=1.\]
Consequently, $\{F_l\}_{l\ge1}$ is a F$\o$lner sequence.

Now take $l\ge1$. For any $(a,b)\in F_l=N_l\times H_l$, one has $(a,b)^{-1}=(\af_{b^{-1}}(a^{-1}), b^{-1})$, which follows that $F_l^{-1}\subset\af_{H_l^{-1}}(N_l^{-1})\times H_l^{-1}$, 
and further, since $N_l$ and $H_l$ are symmetric,
\begin{align*}
F_l^{-1}F_l&\subset\left(\af_{H_l^{-1}}(N_l^{-1})\times H_l^{-1}\right)\cdot(N_l\times H_l)\subset \left(\af_{H_l^{-1}}(N_l^{-1})\cdot\af_{H_l^{-1}}(N_l)\right)\times(H_l^{-1}H_l)\\
&=(\af_{H_l}(N_l))^2\times(H_l)^2\subset N_{\xi(l,l)}^2\times H_l^2\stackrel{\xi(l,l)\le lM}{\subset} N_{lM}^2\times H_l^2\stackrel{N_{l_1+l_2}\subset N_{l_1}\cdot N_{l_2}}{\subset} N_l^{2M}\times H_l^2\\
&\subset\left(\bigcup_{1\le i_1, i_2, \cdots, i_{2M-1}\leq L_N} \prod_{j=1}^{2M-1}n_{i_j}^{(l)}N_l\right)\times\left(\bigcup_{1\le i\le L_H}h_i^{(l)}H_l\right)\\
&=\bigcup_{1\le i_1, i_2, \cdots, i_{2M-1}\leq L_N}\bigcup_{1\le i\le L_H} \left(\prod_{j=1}^{2M-1}n_{i_j}^{(l)}, h_i^{(l)}\right)\cdot \left(\af_{(h_i^{(l)})^{-1}}(N_l)\times H_l\right)\\
&\stackrel{h_i^{(l)}\in H_l\ {\rm and}\ H_l^{-1}=H_l}{\subset} \bigcup_{1\le i_1, i_2, \cdots, i_{2M-1}\leq L_N}\bigcup_{1\le i\le L_H} \left(\prod_{j=1}^{2M-1}n_{i_j}^{(l)}, h_i^{(l)}\right)\cdot \left(N_{\xi(l,l)}\times H_l\right)\\
&\subset\bigcup_{1\le i_1, i_2, \cdots, i_{2M-1}\leq L_N}\bigcup_{1\le i\le L_H} \left(\prod_{j=1}^{2M-1}n_{i_j}^{(l)}, h_i^{(l)}\right)\cdot \left(N_l^M\times H_l\right)\\
&\subset\bigcup_{1\le i_1, i_2, \cdots, i_{2M-1}\leq L_N}\bigcup_{1\le i\le L_H}\bigcup_{1\leq k_1,k_2,\cdots,k_{M-1}\le L_N}\left(\prod_{j=1}^{2M-1}n_{i_j}^{(l)}, h_i^{(l)}\right)\cdot\left(\prod_{p=1}^{M-1}n_{k_p}^{(l)}, e_H\right)F_l,
\end{align*}
from which we see that 
\[L_{F_l}\leq L_N^{2M-1}\cdot L_H\cdot L_N^{M-1}=L_N^{3M-2}\cdot L_H\]
uniformly for $l\ge1$. This verifies conditions (ii) and (iii) in Definition \ref{AFC}. 

For (i) in Definition \ref{AFC}, let $l_1,l_2\ge1$. To show $F_{l_1+l_2}\subset F_{l_1}\cdot F_{l_2}$, we notice that
\[F_{l_1}\cdot F_{l_2}=\{(a_1\af_{b_1}(a_2), b_1b_2): (a_i,b_i)\in N_{l_i}\times H_{l_i}, i=1,2\}.\]
Take any $(x,y)\in F_{l_1+l_2}=N_{l_1+l_2}\times H_{l_1+l_2}$. Since $y\in H_{l_1+l_2}\subset H_{l_1}\cdot H_{l_2}$, there exist group elements $b_1\in H_{l_1}$ and  $b_2\in H_{l_2}$ with
\[y=b_1b_2.\]
By assumption (3), the finite sets $N_{l_1}x$ and $\af_{b_1}(N_{l_2})$ has nonempty intersection. That is, there exists $a_1\in N_{l_1}$ and $a_2\in N_{l_2}$ satisfying
\[a_1x=\af_{b_1}(a_2).\]
This yields that $x=a_1^{-1}\af_{b_1}(a_2)$, and hence by the symmetry of $N_{l_1}$,
\[(x,y)=(a_1^{-1}\af_{b_1}(a_2), b_1b_2)\in F_{l_1}\cdot F_{l_2}.\]
This verifies $F_{l_1+l_2}\subset F_{l_1}\cdot F_{l_2}$, and finally $G$ satisfies (AFC).
\end{proof}

\vspace{0.2cm}

\begin{eq}\label{Hei}
({\bf Discrete Heisenberg groups})\ Let $N=\Z^2$, $H=\Z$ and $\af: \Z\to {\rm Aut}(\Z^2)$ be defined by 
\[\af_l(a,b)=(a+lb,b)\]
for $(a,b)\in\Z^2$ and $l\in\Z$. Then it is known that $G=\Z^2\rtimes_\af\Z=\mathbb{H}_3(\Z)$ is the three-dimension discrete Heisenberg group.

Note that both $\Z^2$ and $\Z$ satisfy (SAFC), and by taking F$\o$lner sequences
\[N_l=([-\sqrt{l},\sqrt{l}]\times[-\sqrt{l},\sqrt{l}])\cap\Z^2\ \ {\rm and}\ \ H_l=[-\sqrt{l},\sqrt{l}]\cap\Z\]
will yield this. In fact, $L_{\Z^2}=4$ and $L_{\Z}=2$, and since $\sqrt{l_1+l_2}\leq \sqrt{l_1}+\sqrt{l_2}$, we have
\[H_{l_1+l_2}=[-\sqrt{l_1+l_2},\sqrt{l_1+l_2}]\cap\Z\subset([-\sqrt{l_1},\sqrt{l_1}]\cdot[-\sqrt{l_2},\sqrt{l_2}])\cap\Z=H_{l_1}\cdot H_{l_2}\]
and
\[N_{l_1+l_2}=([-\sqrt{l_1+l_2},\sqrt{l_1+l_2}])^2\cap\Z^2\subset [-\sqrt{l_1}-\sqrt{l_2}, \sqrt{l_1}+\sqrt{l_2}]^2\cap\Z^2=N_{l_1}\cdot N_{l_2}.\]
 Moreover, for every $(i,j)\in\N\times\N$, $h\in H_i=[-\sqrt{i},\sqrt{i}]$ and $n=(a,b)\in N_j=[-\sqrt{j},\sqrt{j}]^2$, one has
\[\af_{h}(a,b)=(a+bh,b)\in[-\sqrt{j}-\sqrt{ji},\sqrt{j}+\sqrt{ji}]\times[-\sqrt{j},\sqrt{j}]\subset N_{\sqrt{j}+\sqrt{ji}}.\]
Therefore, let $\xi_{(i,j)}=\sqrt{j}+\sqrt{ji}$ and it is clear that $\xi_{(i,i)}=\sqrt{i}+i\le 2i$.

To see (2), let $a=(m,k)\in\Z^2$ and $b\in\Z$. Then for every $l\in\N$, one immediately see
\[a\af_b(N_l)=[-(b+1)\sqrt{l}+m, (b+1)\sqrt{l}+m]\times[-\sqrt{l}+k, \sqrt{l}+k].\]
It is obvious that, for any given $\varepsilon>0$, we can choose sufficiently large $l$, so that
\[[-\sqrt{l},\sqrt{l}]\cap\Z\subset[-(b+1)\sqrt{l}+m, (b+1)\sqrt{l}+m]\cap\Z\]
and (assume $b>-1$ without loss of generality)
\[|[-\sqrt{l}+k,\sqrt{l}+k]\cap[-\sqrt{l},\sqrt{l}]\cap\Z|>(1-\varepsilon)|[-\sqrt{l},\sqrt{l}]\cap\Z|.\]
This will follow (2). 

Finally, let $x=(k,m)\in [-\sqrt{l_1+l_2},\sqrt{l_1+l_2}]^2\cap\Z^2$ and $b\in H_{l_1}=[-\sqrt{l_1},\sqrt{l_1}]\cap\Z$. To see (3), it is straightforward that (assume $b>-1$ without loss of generality)
\[N_{l_1}x=([-\sqrt{l_1}+k, \sqrt{l_1}+k]\times[-\sqrt{l_1}+m,\sqrt{l_1}+m])\cap\Z\]
and
\[\af_b(N_{l_2})=([-(b+1)\sqrt{l_2}, (b+1)\sqrt{l_2}]\times[-\sqrt{l_2},\sqrt{l_2}])\cap\Z.\]
Note that since $-\sqrt{l_1+l_2}\leq k\leq \sqrt{l_1+l_2}$ and $-\sqrt{l_1}\leq b\leq\sqrt{l_1}$, we see that none of
\begin{align*}
&\sqrt{l_1}+k<-(b+1)\sqrt{l_2}, \ \ (b+1)\sqrt{l_2}<-\sqrt{l_1}+k\\
&\sqrt{l_1}+m<-\sqrt{l_2},\ \ \sqrt{l_2}<-\sqrt{l_1}+m
\end{align*}
could happen. This follows that $N_{l_1}x\cap\af_b(N_{l_2})\neq\varnothing$. Therefore, $(\mathbb{H}_3(\Z), \{N_l\times H_l\})$ satisfies (AFC). Similarly, one can also check that $\{F_l^{-1}\}$ satisfies the condition of (WAFC) (but not necessarily (AFC)). Now we have the following corollary.
\end{eq}

\begin{cor}\label{hg}
For every $n\ge1$, there is an appropriate F$\o$lner sequence of approximate groups $\{F_l\}$ such that both $(\mathbb{H}_{2n+1}(\Z), \{F_l\})$ and $(\mathbb{H}_{2n+1}(\Z), \{F_l^{-1}\})$ satisfie (WAFC).
\end{cor}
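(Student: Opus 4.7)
The plan is to extend the construction of Example~\ref{Hei}---which treats $\mathbb{H}_3(\Z)$---to the general case $\mathbb{H}_{2n+1}(\Z)$ by exhibiting the latter as a semidirect product and invoking Theorem~\ref{sdp}. First I would identify $\mathbb{H}_{2n+1}(\Z) \cong \Z^{n+1} \rtimes_\alpha \Z^n$, where the action is $\alpha_x(y, z) = (y, z + x \cdot y)$ read off the standard upper-triangular matrix presentation (here $x \in \Z^n$ acts on $(y,z) \in \Z^n \times \Z = \Z^{n+1}$). Both factors are finitely generated abelian, hence satisfy (SAFC) by Example~\ref{eg}.

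In analogy with Example~\ref{Hei}, I would set
\[
N_l = \bigl([-\sqrt{l}, \sqrt{l}]^n \times [-\sqrt{l}, \sqrt{l}]\bigr) \cap \Z^{n+1}, \qquad H_l = [-\sqrt{l}, \sqrt{l}]^n \cap \Z^n,
\]
and define $F_l = N_l \times H_l \subset \mathbb{H}_{2n+1}(\Z)$. The three hypotheses of Theorem~\ref{sdp} would then be verified by direct coordinate-wise computations: for (1), take $\xi(i, j) = \sqrt{j} + n\sqrt{ji}$, the factor $n$ absorbing the $n$-fold inner product in $\alpha$, which gives $\xi(i, i) \le (n+1) i$; for (2), the translation-and-shear distortion of $N_l$ is negligible against its volume as $l \to \infty$; for (3), a coordinate-wise argument parallel to the end of Example~\ref{Hei} shows that the required intersection $N_{l_1} x \cap \alpha_b(N_{l_2})$ is nonempty for every admissible $x$ and $b$. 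Applying Theorem~\ref{sdp} yields $(\mathbb{H}_{2n+1}(\Z), \{F_l\})$ satisfies (AFC), and \emph{a fortiori} (WAFC).

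For $\{F_l^{-1}\}$ the Følner property is automatic since $\mathbb{H}_{2n+1}(\Z)$ is discrete countable amenable, so the left and right Følner conditions coincide. To verify the approximate-group property of $F_l^{-1}$, I would compute, using $(a, b)^{-1} = (\alpha_{b^{-1}}(-a), b^{-1})$, that $F_l F_l^{-1}$ is contained in a set of the form $\bigcup_{b' \in H_l \cdot H_l^{-1}} (N_l + \alpha_{b'}(N_l)) \times \{b'\}$, and then cover this set by a uniformly bounded number of left translates of $F_l^{-1}$ using the same box-enlargement bounds that produced $\xi$ in condition (1).

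The main obstacle is the bookkeeping in condition (3): the sheared parallelepiped $\alpha_b(N_{l_2})$ must meet every translate $N_{l_1} x$ with $x \in N_{l_1+l_2}$, and the multi-index case is more delicate than the one-dimensional verification in Example~\ref{Hei} since intersection of the coordinate projections alone no longer suffices. Should the naïve square boxes not do, I would enlarge the $z$-direction of $N_l$ (e.g.\ scaling the central coordinate by a higher power of $l$) to absorb the cross-term, at the cost of a larger but still uniformly bounded covering constant $L_{\mathbb{H}_{2n+1}(\Z)}$, which is all (WAFC) requires.
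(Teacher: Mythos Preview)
Your proposal is correct and follows essentially the same approach as the paper: the paper's proof is simply ``The verification is similar to that in Example~\ref{Hei} of the special case for $n=1$'', and you are carrying out precisely that generalization via the semidirect product decomposition and Theorem~\ref{sdp}. Your anticipation of the subtlety in condition~(3) and the fallback of enlarging the central coordinate is prudent, but the overall strategy matches the paper's exactly.
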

\begin{proof}
The verification is similar to that in Example \ref{Hei} of the special case for $n=1$.
\end{proof}

\begin{cor}\label{dp}
Let $N$ and $H$ be countable groups with symmetric F$\o$lner sequences of strongly approximate groups. Then the direct product $G=N\times H$
satisfies (SAFC)((AFC) or (WAFC) respectively) if both of $N$ and $H$ do. 
\end{cor}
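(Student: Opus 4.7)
The direct product $G=N\times H$ is the special case of the semidirect product from Theorem \ref{sdp} in which the action $\af\colon H\to\Aut(N)$ is trivial, i.e.\ $\af_h=\id_N$ for every $h\in H$. The plan is to take $F_l=N_l\times H_l$ and verify all required conditions coordinate-wise, reducing them to the assumed properties of $\{N_l\}\sbs N$ and $\{H_l\}\sbs H$; triviality of the action collapses every twisting term and makes the verification largely routine.

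For the (AFC) case, I would simply apply Theorem \ref{sdp}: its hypotheses become automatic under $\af=\id$, since (1) holds with $\xi(i,j)=j$ and $M=1$ because $\af_{H_i}(N_j)=N_j$; (2) reduces to $|aN_l\cap N_l|/|N_l|\to 1$, which is the F$\o$lner property of $\{N_l\}$; and (3) asks that $N_{l_1}x\cap N_{l_2}\ne\varnothing$ whenever $x\in N_{l_1+l_2}$, which follows by writing $x=n_1n_2$ with $n_i\in N_{l_i}$ (using $N_{l_1+l_2}\sbs N_{l_1}N_{l_2}$) and noting that $n_1^{-1}x=n_2\in N_{l_2}\cap N_{l_1}x$ by the symmetry of $N_{l_1}$. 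For the (WAFC) case alone (where one has no inclusion $N_{l_1+l_2}\sbs N_{l_1}N_{l_2}$ to exploit), one argues F$\o$lnerness of $\{F_l\}$ directly from
\[|(a,b)F_l\cap F_l|=|aN_l\cap N_l|\cdot|bH_l\cap H_l|,\]
and, given coverings $N_l^{-1}N_l\sbs\bigcup_{i=1}^{L_N}n_i^{(l)}N_l$ and $H_l^{-1}H_l\sbs\bigcup_{j=1}^{L_H}h_j^{(l)}H_l$ with $L_N,L_H$ uniform in $l$, observes that
\[F_l^{-1}F_l=(N_l^{-1}N_l)\times(H_l^{-1}H_l)\sbs\bigcup_{i=1}^{L_N}\bigcup_{j=1}^{L_H}(n_i^{(l)},h_j^{(l)})F_l,\]
so that $L_{F_l}\le L_N\cdot L_H$ uniformly, which is exactly (WAFC).

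For the (SAFC) upgrade, note that the same covering uses elements $n_i^{(l)}\in N_l$ and $h_j^{(l)}\in H_l$ provided by the strongly approximate hypothesis on each factor, so $(n_i^{(l)},h_j^{(l)})\in F_l$. Combined with the coordinate-wise inclusion
\[F_{l_1+l_2}\sbs(N_{l_1}N_{l_2})\times(H_{l_1}H_{l_2})=F_{l_1}F_{l_2},\]
this yields (SAFC) for $G$. I do not foresee any genuine obstacle here: the argument is a routine coordinate-wise computation, and the only point requiring a touch of care is keeping the three variants of the condition cleanly distinct, in particular whether the translating elements need to lie inside $F_l$ (as in SAFC) or merely somewhere in $G$ (as in WAFC/AFC).
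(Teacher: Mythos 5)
Your proposal is correct and takes essentially the same route as the paper: both view $G=N\times H$ as the semidirect product under the trivial action and invoke Theorem \ref{sdp} for the (AFC) part, then obtain (SAFC) from the coordinate-wise covering $F_l^{-1}F_l\subset\bigcup_{i,j}(n_i,h_j)F_l$ with $(n_i,h_j)\in N_l\times H_l=F_l$, and settle the (AFC)/(WAFC) variants by observing that every step of the proof of Theorem \ref{sdp} goes through for the trivial action. Your explicit checks of hypotheses (1)--(3) (via $\xi(i,j)=j$, $M=1$, the F$\o$lner property of $\{N_l\}$, and the symmetry of $N_{l_1}$) merely fill in details the paper declares routine.
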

\begin{proof}
Assume $N$ and $H$ satisfies (SAFC). Since the direct product could be seen as the semidirect product under the trivial action $\af(H)=\{{\rm id}_N\}$, and it is clear that all the assumptions in Theorem \ref{sdp} hold for the trivial action, we see that $G=N\times H$ satisfies (AFC) by Theorem \ref{sdp}. 

On the other hand, that it satisfies (SAFC) follows from the straightforward verification
\[F_l^{-1}F_l\subset \bigcup_{1\le i\le L_N} \bigcup_{1\le j\le L_H}(n_i,h_j) N_l\times H_l=\bigcup_{1\le i\le L_N}\bigcup_{1\le j\le L_H}(n_i,h_j)F_l,\]
where $(n_i, j_j)\in N_l\times H_l=F_l$.

Finally, if $N$ and $H$ satisfy (AFC)(or (WAFC)), then for the trivial action $\af$ on $N$, every step in the proof of Theorem \ref{sdp} also works for $\af$. However, to avoid confusion here, we particularly specify that, in the proof of Theorem \ref{sdp}, the condition (i)$'$ in Definition \ref{AFC} is invoked only when we want to verify that $F_l$ is an approximate group, but this automatically holds for the trivial action).
\end{proof}

\section{Dynamical systems $(X, G)$ with $G$ satisfying (WAFC)}\label{sec:4}
\subsection{Topological Rokhlin dimension and (controlled) marker property}
\begin{df}\label{Rokdim}
Let $(X, G)$ be a topological dynamical system, where $X$ is a compact metric space, $G$ an amenable countable group acting on $X$.

Let $D\in\N\cup\{0\}$. We say that $(X,G)$ has {\it topological Rokhlin dimension} $D$, and denote ${\rm dim}_{\rm Rok}(X,G)=D$, if $D$ is the smallest natural number with the following property:

There is a F$\o$lner sequence $\{F_n\}\subset G$ such that for every $n\in\N$, there exist open subsets $U_0,U_1,\cdots,U_D\subset X$ satisfying

(i) For every $0\le i\le D$, $\{g\overline{U_i}: g\in F_n\}$ is a disjoint (closed) tower with the shape $F_n$;

(ii) The union of these (open) towers covers $X$, that is, $\bigcup_{0\le i\le D} F_nU_i=X$.
\end{df}

\begin{df}[Definition 4.1, \cite{Sza}]\label{mk}
Let $(X, G)$ be a topological dynamical system, $F\Subset G$ a finite subset and $O\subset X$ an open set. We say $O$ is an {\it $F$-marker}, if

(i) The collection $\{g\overline{O}: g\in F\}$ is mutually disjoint;

(ii) $X=\bigcup_{g\in G}gO$.

Moreover, for a countable amenable group $G$ with $\{F_n\}$ being its F$\o$lner sequence,  we shall say $(X,G)$ has the {\it marker property}, if for every $n\ge1$, $X$ admits an $F_n$-marker.
\end{df}

\begin{df}
Let $(X, G)$ be a topological dynamical system, $F\Subset G$ a finite subset and $L\in\N$. Let $O\subset X$ be an open set. We say $O$ is a {\it right $L$-controlled $F$-marker}, if $O$ satisfies the following conditions:

(i) The open set $O$ is an $F$-marker in the sense of Definition \ref{mk};

(ii) There exists a finite subset $B=\{g_1, g_2,\cdots,g_L\}\subset G$ such that
\[X=\bigcup_{i=1}^L \bigcup_{g\in F}gg_iO=(FB)O,\]
that is, $X$ is covered by the image of $O$ under a union of $L$ right translates of $F$. 

Moreover, for a countable amenable group $G$ with $\{F_n\}$ being its F$\o$lner sequence,, we shall say $(X,G)$ has the {\it $L$-controlled marker property}, if for every $n\ge1$, $X$ admits a right $L$-controlled $F_n$-marker.
\end{df}

\begin{rem}
Intuitively speaking, by the existence of an $L$-controlled $F$-marker $O$, we mean that $X$ satisfies the following: 

(i) The $F^{-1}$-orbit $\{gx: g\in F^{-1}\}$ of every point $x\in X$ intersects $\overline{O}$ at most once: $g_1\overline{O}\cap g_2\overline{O}\ne\varnothing$ if and only if there exists $x_1, x_2\in \overline{O}$ with $y=g_1x_1=g_2x_2$, if and only if $g_1^{-1}y=x_1\in \overline{O}$ and $g_2^{-1}y=x_2\in \overline{O}$;

(ii) Every orbit in $X$ intersects $O$ within a union of $L$ left translates $B^{-1}F^{-1}$ of $F^{-1}$.
\end{rem}

\begin{df}[Definition 3.2, \cite{Sza}]
Let $G$ be a countably infinite group and $d\in\N\cup\{0\}$ be a natural number. 

A topological dynamical system $(X, G)$ is said to has the {\it bounded topological small boundary property with respect to $d$}, abbreviated $(TSBP\le d)$, if whenever $U, V\in X$ are open sets with $\overline{U}\subset V$, we can find $\overline{U}\subset U_0\subset V$ open such that $\partial U_0$ satisfies the following:

The nonnegative number $d$ is the smallest one such that, for every subset of $d$ distinct elements $\{\gamma_0,\gamma_1,\cdots,\gamma_d\}\subset G$, we have
\[\bigcap_{i=0}^d\gamma_i\partial U_0=\varnothing.\]
\end{df}
\begin{rem}
This is rather a strong condition compared to the usual definition of {\it small boundary property}. Recall that a topological dynamical system $(X, G)$ has the {\it small boundary property} if there is a basis for the topology on $X$ consisting of open sets $U$ such that $\overline{D}(\partial U)=0$, where
\[\overline{D}(\partial U)=\inf_{F\Subset G}\sup_{x\in X}\frac{1}{|F|}\sum_{s\in F}1_{\partial U}(sx).\]
The condition $(TSBP\le d)$ follows that for every finite subset $F$ and every $x\in X$, there are at most $d$ distinct points $\{g_ix: g_i\in F, i=0,1,\cdots,d-1\}\subset\partial U$, and therefore
\[\overline{D}(\partial U)\le\inf_{F\Subset G}\sup_{x\in X}\frac{d}{|F|}=0.\]
In fact, there is a version of small boundary property called {\it topological small boundary property}, which is slightly stronger than $(TSBP\le d)$, but also weaker than the usual SBP. See Definition 7.1 in \cite{KS} or Definition 3.2 in \cite{Sza} for reference.
\end{rem}

The following lemma will seem to be a little bit different from Lemma 4.3 in \cite{Sza}(but of great importance for the construction of noncommutative Rokhlin covers). Therefore, to avoid confusion and make it clear, we decide to give the adapted proof here. Readers who are interested may compare it to Lemma 4.3 in \cite{Sza}. In fact, what we need is a “sufficiently larger symmetric” $M$, so that $g_i$ can be put to the right. This will allow us to remedy the downside of non-commutativity of $G$ when we define the Rokhlin towers.

We recall that for subsets $E\subset X$, $M\subset G$ and a number $k\subset \N\cup\{0\}$, we say $E$ is {\it $(M, k)$-disjoint}, if for any $k+1$ distinct group elements $g_0, g_1,\cdots,g_k\in M$, 
\[\bigcap_{0\le i\le k} g_iE=\varnothing.\]

\begin{lem}\label{4.3}
Let $X$ be a compact metric space, $G$ a countably infinite group and $d\in\N$ a natural number. Let $(X, G)$ be a topological dynamical system satisfying $(TSBP\le d)$. 

Suppose that $e\in F\Subset G$ is a finite subset and $e=h_0, h_1, \cdots, h_L$ are chosen arbitrarily. Define
\[\tilde{F}=\bigcup_{0\le j\le L}Fh_j.\]
Take $e=g_0,g_1,\cdots,g_d\in G$ with the property that the following two families (note that this is feasible since $G$ is infinite)
\[\{\tilde{F}^{-1}\tilde{F},\ \tilde{F}^{-1}\tilde{F}g_1,\ \cdots,\ \tilde{F}^{-1}\tilde{F}g_d\}\ \ {\rm and}\ \ \{\tilde{F}^{-1}\tilde{F},\ g_1\tilde{F}^{-1}\tilde{F},\ \cdots,\ g_d\tilde{F}^{-1}\tilde{F}\}\]
are both mutually disjoint. Set 
\[M=\bigcup_{0\le i\le d}(\F^{-1}\F g_i\cup g_i\F^{-1}\F\cup g_i^{-1}\F^{-1}\F\cup \F^{-1}\F g_i^{-1})\ {\rm and}\ M_1=\bigsqcup_{0\le i\le d} FF^{-1} g_i.\]

Note that $M=M^{-1}$. Let $U,V\subset X$ be open sets such that the families $\{g\overline{U}: g\in \F\}$ and $\{g\overline{V}: g\in M\}$ are both pairwise disjoint. Then there exists an open set $W\subset X$ with

$\bullet$ $U\subset W$ and $V\subset \bigcup_{g\in M_1}gW$;

$\bullet$ $\{g\overline{W}: g\in \F\}$ is pairwise disjoint.
\end{lem}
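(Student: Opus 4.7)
The plan is to adapt the strategy of Lemma 4.3 in~\cite{Sza} to the non-abelian setting. Since the hypotheses control only the $\tilde F$-disjointness of translates of $U$ and the $M$-disjointness of translates of $V$ separately, and say nothing about how $U$ meets $V$, the required $W$ must be constructed as a slight thickening of $U$ with carefully selected $g_i^{-1}$-translates of (shrinkages of) $V$ glued on. The containment $U\subset W$ and the covering $V\subset\bigcup_{g\in M_1}gW$ will come essentially for free from this construction, using $g_0=e$ and $e\in FF^{-1}$, so that the genuine work lies in preserving $\tilde F$-disjointness of $\overline W$.

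For the construction, I would first use compactness together with $(TSBP\le d)$ to find an open set $U^*$ with $\overline U\subset U^*$, with the family $\{g\overline{U^*}:g\in\tilde F\}$ still pairwise disjoint, and whose boundary $\partial U^*$ meets no $(d+1)$ distinct group translates in common. Then for each $i=0,1,\dots,d$ I would produce an open $V_i\subset V$, again via $(TSBP\le d)$, such that the $V_i$'s still cover $V$ up to $FF^{-1}$-translates and each $\partial V_i$ enjoys the same $d$-fold boundary control. Setting
\[ W = U^*\cup\bigcup_{i=0}^{d}g_i^{-1}V_i \]
and taking $V_0=V$ (or a slight shrinkage which, together with the other $V_i$, still covers $V$) immediately gives $U\subset W$ and $V\subset g_0W\subset M_1W$.

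The verification of $\tilde F$-disjointness splits, for $g_a\ne g_b$ in $\tilde F$, into four cross-intersection types inside $g_a\overline W\cap g_b\overline W$. The intra-$U^*$ intersection is empty by the choice of $U^*$. The intra-$V_i/V_j$ intersection is nonempty only if the element $g_i g_a^{-1}g_b g_j^{-1}$ is a nontrivial element that maps $V$ into $V$; here I would invoke the two-sided disjointness of $\{g_k\tilde F^{-1}\tilde F\}$ and $\{\tilde F^{-1}\tilde F g_k\}$ to conclude either $(g_a,i)=(g_b,j)$ or that the element lies in $M$, at which point the $M$-disjointness of $V$ closes the case. The mixed $U$-$V$ cross terms are where $(TSBP\le d)$ is essentially used: the $d$-fold cap on boundary intersections, combined with the $d+1$ available choices of $i$, allows a further shrinkage of the $V_i$ (or of the thickening $U^*$) that eliminates the remaining conflicts. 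The main obstacle is precisely this non-commutative cross-term analysis: the element $g_i g_a^{-1}g_b g_j^{-1}$ does not automatically lie in $M$ since in the absence of commutativity the conjugation cannot be collapsed, and this is exactly what forces both the enlarged set $\tilde F$ and the two-sided disjointness condition on the $g_i$'s to be imposed in the hypotheses.
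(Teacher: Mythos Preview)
Your architecture is right and the intra-$U$ and intra-$V$ case analyses are essentially correct (for the latter: both $b^{-1}ag_i^{-1}$ and $g_j^{-1}$ lie in $M$, so $(M,1)$-disjointness of $\overline V$ forces $g_jb^{-1}a=g_i$, and then the two-sided coset disjointness gives $i=j$, $a=b$). The mixed $U$/$V$ case, however, is a genuine gap in your plan. You propose to first fix $V_0,\dots,V_d\subset V$ via $(TSBP\le d)$ applied to $V$, assign $g_i^{-1}$ to $V_i$, and then hope that a later shrinkage removes the $U$--$V$ conflicts. But boundary control on $\partial V_i$ is irrelevant to whether $a\overline{U}$ meets $bg_i^{-1}\overline{V_i}$: nothing in your construction ties the index $i$ to the local configuration of translates of $\overline U$, so once a piece of $V$ has been assigned a bad index no amount of shrinkage rescues it. The sentence ``the $d{+}1$ available choices of $i$ \dots allows a further shrinkage'' is exactly where the argument is missing.

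The paper (following Szab\'o) inverts the order of operations. One applies $(TSBP\le d)$ to $U$, not to $V$, so that $\partial U$ becomes $(G,d)$-disjoint. One then works only with the residual $R=\overline V\setminus\bigcup_{g\in M_1}gU$, the part of $V$ not already absorbed into $M_1U\subset M_1W$, and covers $R$ by finitely many small balls $B_\delta(z_k)$. A compactness argument shows that for small $\delta$ each such ball meets at most $d$ translates $g\overline U$ with $g\in M$ (otherwise a limit point would lie on $d{+}1$ distinct translates of $\partial U$); since the $d{+}1$ sets $g_0\tilde F^{-1}\tilde F,\dots,g_d\tilde F^{-1}\tilde F$ are pairwise disjoint inside $M$, pigeonhole produces an index $c(k)$ with $g_{c(k)}\tilde F^{-1}\tilde F\cdot\overline U$ missing $\overline{B_\delta(z_k)}$ entirely. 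Setting $W=U\cup\bigcup_k g_{c(k)}^{-1}B_\delta(z_k)$, the mixed case is now automatic: $a\overline U\cap bg_{c(k)}^{-1}\overline{B_\delta(z_k)}\ne\varnothing$ with $a\ne b$ in $\tilde F$ would mean $g_{c(k)}b^{-1}a\in g_{c(k)}\tilde F^{-1}\tilde F$ sends $\overline U$ into $\overline{B_\delta(z_k)}$, contradicting the choice of $c(k)$. The essential point you are missing is that the ``color'' attached to each piece of $V$ must be chosen \emph{after} inspecting which translates of $\overline U$ are nearby, not before.
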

\begin{proof}
Since $\overline{U}$ is $(\F, 1)$-disjoint, we can take a positive number $\varepsilon>0$ such that $\overline{B}_\varepsilon(U)=\{y\in X: d(y, U)\le \varepsilon\}$ is $(\F, 1)$-disjoint. The assumption of $(TSBP\le d)$ yields a new open set $U\subset U'\subset B_\varepsilon(U)$ such that $\partial U'$ is $(G, d)$-disjoint. We will still use $U$ to denote this new open set, by abuse of notation. Therefore, without loss of generality, we may assume $U$ be such that $\partial U$ is $(G, d)$-disjoint.

Let $R=\overline{V}\setminus \bigcup_{g\in M_1}gU$. Since $R\subset \overline{V}$, we know that $R$ is $(M,1)$-disjoint. Therefore, take $\rho>0$ so that $\overline{B}_\rho(R)$ is also $(M,1)$-disjoint. Now we see that there is a $\delta>0$ such that for all $x\in R$, 
\[|\{g\in M: g\overline{U}\cap\overline{B}_\delta(x)\ne\varnothing\}|<d.\]
In fact, if this doesn't hold, we can choose a sequence $\delta_n>0$ with $\delta_n\to 0$, and a sequence $x_n\in R$ converging to some $x\in R$ such that for all $n\ge1$, 
\[|\{g\in M: g\overline{U}\cap\overline{B}_{\delta_n}(x_n)\ne\varnothing\}|\ge d.\]
Then we  assume, without loss of generality that, there are distinct elements $\gamma_0,\gamma_1,\cdots, \gamma_d\in M$ such that $\gamma_i(\overline{U})\cap\overline{B}_{\delta_n}(x_n)\ne\varnothing$ for all $n\ge1$ and $l=0,1,\cdots,d$. Since $\delta_n\to0$, we see 
\[x\in R\cap\bigcap_{0\le l\le d}\gamma_l\overline{U}\subset \bigcap_{0\le l\le d}\gamma_l\partial U.\]
However, since $\partial U$ is $(G, d)$-disjoint, the right most one has to be an empty set, which comes to a contradiction. Thus, there is a $\delta<\rho$ with the above property. 

Now fix a finite open covering $R\subset \bigcup_{1\le i\le s}B_\delta(z_i)$ for some $z_1,z_2,\cdots,z_s\in R$. Note that the right side is $(M,1)$-disjoint, since $\delta<\rho$ and $\overline{B}_\rho(R)$ is $(M,1)$-disjoint. Now also note that, since the subfamily $\{g_i\F^{-1}\F: 0\le i\le d\}\subset M$ is pairwise disjoint, there is a map $c:\{1,\cdots,s\}\to\{0,\cdots,d\}$ such that for all $g\in g_{c(i)}\F^{-1}\F$,
\[g\overline{U}\cap\overline{B}_\delta(z_i)=\varnothing.\]
Then we let 
\[W=U\cup\bigcup_{1\le i\le s}g_{c(i)}^{-1}(B_\delta(z_i)).\]
It is clear that $U\subset W$. To show $V\subset\bigcup_{g\in M_1}gW$, note that
\begin{align*}
V&\subset R\cup \bigcup_{g\in M_1}gU\subset \bigcup_{1\le i\le s}B_\delta(z_i)\cup \bigcup_{g\in M_1}gU\\
&\subset \bigcup_{1\le i\le s} g_{c(i)}W\cup\bigcup_{g\in M_1}gU\subset \bigcup_{g\in M_1}gW\ ({\rm since}\ U\subset W).
\end{align*}
Finally, we show that $\overline{W}$ is $(\F,1)$-disjoint. If there are distinct $a,b\in \F$ such that $a\overline{W}\cap b\overline{W}\ne\varnothing$,  then we take $x,y\in\overline{W}$ with $a(x)=b(y)$.  This is divided into the following three situations:

(i) If $x,y\in\overline{U}$, this follows that $\overline{U}$ is not $(\F,1)$-disjoint, a contradiction;

(ii) If $x\in g_{c(i_1)}^{-1}(\overline{B}_\delta(z_{i_1}))$ and $y\in g_{c(i_2)}^{-1}(\overline{B}_\delta(z_{i_2}))$, then 
\[a(x)=b(y)\in ag_{c(i_1)}^{-1}(\overline{B}_\delta(z_{i_1}))\cap bg_{c(i_2)}^{-1}(\overline{B}_\delta(z_{i_2}))\ne\varnothing.\]
This follows that 
\[\varnothing\ne b^{-1}ag_{c(i_1)}^{-1}(\overline{B}_\delta(z_{i_1}))\cap g_{c(i_2)}^{-1}(\overline{B}_\delta(z_{i_2}))\subset b^{-1}ag_{c(i_1)}^{-1}(\overline{B}_\rho(R))\cap g_{c(i_2)}^{-1}(\overline{B}_\rho(R)).\]
Since $\overline{B}_\rho(R)$ is $(M,1)$-disjoint and $b^{-1}ag_{c(i_1)}^{-1}, g_{c(i_2)}^{-1}\in M$(this is exactly how we benefit from the “extension” of $M_1$ to $M$), we have $b^{-1}ag_{c(i_1)}^{-1}=g_{c(i_2)}^{-1}$ and hence 
\[g_{c(i_2)}b^{-1}a=g_{c(i_1)}.\]
By the assumption, $\{g_i\F^{-1}\F: i=0,1,\cdots,d\}$ is also disjoint, this will follow that $c(i_1)=c(i_2)$ and hence $b=a$, a contradiction;

(iii) If $x\in\overline{U}$ and $y\in g_{c(i)}^{-1}(\overline{B}_\delta(z_i))$ for some $i$, then 
\[a(x)=b(y)\in a(\overline{U})\cap bg_{c(i)}^{-1}(\overline{B}_\delta(z_i))\ne\varnothing,\]
which implies that $g_{c(i)}b^{-1}a(\overline{U})\cap \overline{B}_\delta(z_i)\ne\varnothing$. But this contradicts to the choice of $c(i)$. Therefore, the family $\{g\overline{W}: g\in \F\}$ is disjoint, which completes the proof.
\end{proof}

\begin{rem}\label{rm1}
For any finite subsets $e\in F\subset \F\Subset G$, one can always choose group elements $h_1, h_2,\cdots,h_L\in G$ such that 
\[\F\subset\bigcup_{1\le j\le L} h_jF.\]
In fact, simply setting $\{h_j: j=1,2,\cdots,L\}=\F$ will work. Therefore, Lemma \ref{4.3} makes sense for any finite subset $\F$ containing $F$.
\end{rem}

\begin{thm}\label{4.4}
Let $G$ be a countable group, $X$ a compact metric space, and $d\in \N$ a natural number. 

Let $(X, G)$ be a free topological dynamical system satisfying $(TSBP\le d)$. Let 
\[F, e=h_0, h_1,\cdots, h_L, e=g_0, g_1,\cdots, g_d, \F, M, M_1\]
be as in Lemma \ref{4.3}. Then there exists an open set $O\subset X$ such that

$\bullet$ $\overline{O}$ is $(\F, 1)$-disjoint;

$\bullet$ $X=\bigcup_{g\in M_1}gO$.
\end{thm}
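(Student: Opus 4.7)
The plan is to build $O$ by iterating Lemma \ref{4.3} finitely many times along a well-chosen finite open cover of $X$, starting from the empty set and enlarging step by step until the whole space is absorbed into the $M_1$-translates. The set-up of the hypotheses in Lemma \ref{4.3} is exactly tailored for such an inductive absorption argument, so the main work is producing a suitable initial cover.

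First, I would use freeness together with compactness to extract a finite open cover $V_1, V_2, \dots, V_n$ of $X$ such that $\{g\overline{V_k} : g \in M\}$ is pairwise disjoint for each $k$. Since the action is free and $M \subset G$ is finite, for every $x \in X$ the points $\{gx : g \in M\}$ are pairwise distinct, so continuity of the action provides an open neighborhood $V_x$ of $x$ whose closure $\overline{V_x}$ is $(M,1)$-disjoint (shrinking $V_x$ slightly if necessary so that the disjointness is preserved by passing to closures). Compactness of $X$ then yields the desired finite subcover.

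Next, I would construct a chain $U_0 \subset U_1 \subset \cdots \subset U_n$ of open sets by induction. Take $U_0 = \varnothing$, for which $\{g\overline{U_0} : g \in \tilde{F}\}$ is pairwise disjoint vacuously. Assuming $U_{k-1}$ has been produced with $\{g\overline{U_{k-1}} : g \in \tilde{F}\}$ pairwise disjoint, apply Lemma \ref{4.3} to the pair $(U_{k-1}, V_k)$ — whose hypotheses are now verified by the inductive disjointness of $U_{k-1}$ and the choice of the cover — to obtain an open set $U_k$ with $U_{k-1} \subset U_k$, with $\{g\overline{U_k} : g \in \tilde{F}\}$ pairwise disjoint, and with $V_k \subset \bigcup_{g \in M_1} g U_k$. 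Because the chain is increasing, the containment $V_j \subset \bigcup_{g \in M_1} g U_k$ persists for all $j \le k$, so after $n$ steps every $V_j$ is covered by $M_1$-translates of $U_n$.

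Setting $O = U_n$ then finishes the proof: the closure $\overline{O}$ is $(\tilde{F},1)$-disjoint by the final step of the induction, and
\[X = \bigcup_{k=1}^{n} V_k \subset \bigcup_{g \in M_1} g\, U_n = \bigcup_{g \in M_1} gO.\]
I do not foresee a genuine obstacle here, since the combinatorial heavy lifting — notably the use of the $(TSBP \le d)$ hypothesis to thin the boundary of the enlarged open set, and the symmetric setup of $M$ which prevents new $\tilde{F}$-collisions from being introduced — has already been absorbed into Lemma \ref{4.3}. The only delicate point is the initial production of a cover with $M$-disjoint closures, which is precisely where freeness of the action enters.
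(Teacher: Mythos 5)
Your proposal is correct and follows essentially the same route as the paper: extract a finite cover by open sets whose closures are $(M,1)$-disjoint using freeness and compactness, then absorb the cover elements one at a time via Lemma \ref{4.3} along an increasing chain of $(\F,1)$-disjoint open sets. The only cosmetic difference is that you seed the induction with the empty set whereas the paper starts with the first cover element itself; both work since $\F\subset M$ guarantees the base case.
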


\begin{proof}
Since $(X, G)$ is free, for every $x\in X$, we can choose an open subset $U_x\ni x$ such that the family
\[\{g\overline{U_x}: g\in M^{-1}=M\}\]
is pairwise disjoint. The compactness of $X$ yields a finite subcovering $X=\bigcup_{0\le i\le s}U_i$. 

Set $U=U_0$ and $V=U_1$. From Lemma \ref{4.3}, choose an open set $W_1\subset X$ such that $U_0\subset W_1$, $U_1\subset \bigcup_{g\in M_1}gW_1$ and $\overline{W_1}$ is $(\F,1)$-disjoint. Then we have
\[U_0\cup U_1\subset W_1\cup\bigcup_{g\in M_1}gW_1\subset\bigcup_{g\in M_1}gW_1,\]
since $e\in M_1$. Now we repeat this procedure. Suppose that $W_k$ is already defined. Set $U=W_k$ and $V=U_{k+1}$. Lemma \ref{4.3} again gives us $W_{k+1}$ such that $U=W_k\subset W_{k+1}$, $V=U_{k+1}\subset \bigcup_{g\in M_1}gW_{k+1}$ and $\overline{W_{k+1}}$ is $(\F,1)$-disjoint. Note that since $W_k\subset W_{k+1}$, 
\begin{align*}
U_0\cup\cdots\cup U_k\cup U_{k+1}&\subset U_{k+1}\cup\bigcup_{g\in M_1}gW_k\\
&\subset \bigcup_{g\in M_1}gW_{k+1}\cup\bigcup_{g\in M_1}gW_k\subset\bigcup_{g\in M_1}gW_{k+1},
\end{align*}
Set
\[O=W_s.\]
Then $\overline{O}=\overline{W_s}$ is $(\F,1)$-disjoint and 
\[X=U_0\cup\cdots\cup U_s\subset\bigcup_{g\in M_1}gW_s=\bigcup_{g\in M_1}gO,\]
which completes the proof.
\end{proof}

\begin{cor}\label{4.5}
Let $G$ be a countable group, $X$ a compact metric space and $d\in \N$ a natural number. Suppose that $(X, G)$ is a free topological dynamical system satisfying $(TSBP\le d)$.

Let $F\Subset G$ be any (nonempty) finite subset, and $e=h_0, h_1,\cdots, h_L$ be arbitrary group elements. Let $e=g_0, g_1,\cdots, g_d, M_1, M$ be as in Lemma \ref{4.3}. Then there is an open set $O$ with the following properties:

(1) $O$ is an $Fh_j$-marker for all $j=1,2,\cdots,L$;

(2) $X=\bigcup_{g\in M_1}gO$.
\end{cor}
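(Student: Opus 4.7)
The plan is to simply apply Theorem \ref{4.4} and verify the marker conditions of Definition \ref{mk} for each $Fh_j$.

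First, I would invoke Theorem \ref{4.4} with the same data $F, e = h_0, h_1, \ldots, h_L, e = g_0, g_1, \ldots, g_d, \tilde{F}, M, M_1$, which produces an open set $O \subset X$ satisfying the two bullet points there: $\overline{O}$ is $(\tilde{F}, 1)$-disjoint and $X = \bigcup_{g \in M_1} gO$. The second of these gives property (2) of the corollary immediately, since it is exactly the stated covering condition.

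For property (1), I need to show that $O$ is an $Fh_j$-marker for every $j = 0, 1, \ldots, L$, i.e., that the two conditions in Definition \ref{mk} hold with $Fh_j$ in place of $F$. For condition (i) of Definition \ref{mk}, note that by construction $Fh_j \subset \tilde{F} = \bigcup_{0 \le i \le L} Fh_i$. Since $\overline{O}$ is $(\tilde{F}, 1)$-disjoint, meaning that $g_1\overline{O} \cap g_2\overline{O} = \varnothing$ whenever $g_1 \ne g_2$ both lie in $\tilde{F}$, the smaller family $\{g\overline{O} : g \in Fh_j\}$ is automatically pairwise disjoint. For condition (ii) of Definition \ref{mk}, the inclusion $M_1 \subset G$ together with $X = \bigcup_{g \in M_1} gO$ from Theorem \ref{4.4} gives $X = \bigcup_{g \in G} gO$, as required.

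There is no real obstacle here; the corollary is essentially a repackaging of Theorem \ref{4.4}, where the key point is that the finite set $\tilde{F}$ was chosen precisely to envelope all of the translates $Fh_j$, so the single $(\tilde{F}, 1)$-disjointness statement yields the disjointness conditions for all $L+1$ markers at once.
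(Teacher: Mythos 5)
Your proposal is correct and follows essentially the same route as the paper: invoke Theorem \ref{4.4} to obtain $O$ with $\overline{O}$ being $(\tilde{F},1)$-disjoint and $X=\bigcup_{g\in M_1}gO$, then observe that $Fh_j\subset\tilde{F}$ forces the $(Fh_j,1)$-disjointness for each $j$. The only (harmless) addition is your explicit verification of condition (ii) of the marker definition via $M_1\subset G$, which the paper leaves implicit.
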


\begin{proof}
By Theorem \ref{4.4}, there is an open set $O\subset X$ such that 

$\bullet$ $\overline{O}$ is $(\F, 1)$-disjoint;

$\bullet$ $X=\bigcup_{g\in M_1}gO$.

\noindent Therefore, it suffices to show that $O$ is an $Fh_j$-marker for all $j=1,2,\cdots,L$, or in other words, $\overline{O}$ is $(Fh_j,1)$-disjoint. But this is immediate, since $\tilde{F}=\bigcup_{0\le k\le L}Fh_k\supset Fh_j$.
\end{proof}

\begin{rem}
By Corollary \ref{4.5}, what we want to prove is that, we have not only the existence of an $F$-marker $O$ for a given finite set $F$, but also one for any given finite collection of right translates of $F$.
\end{rem}

\begin{rem}
As what we mentioned in Remark \ref{rm1}, almost without any adaption, the same procedure applies to Theorem \ref{4.4} and Corollary \ref{4.5} for any finite subset $\F$ that contains $F$, therefore our results also hold for any such $\F$. 
\end{rem}

\subsection{Finite Rokhlin dimension of $(X, G)$}
Let $G$ be a group and $J\subset G$ be a subset(not necessarily finite). Define
\[Z(J)=\{g\in G: gz=zg\ {\rm for\ all\ }z\in J\}.\]
Note that if $J=G$, then $Z(G)$ is the usual concept of the center of $G$. It is clear that for any $J\subset G$, $Z(J)$ is always a subgroup of $G$.

\begin{lem}\label{ct}
Let $G$ be an infinite group, $F\Subset G$ a finite subset and $d\in\N$ a natural number. 

If $H\subset G$ is an infinite subset, then we can always choose $g_1, \cdots, g_d\in H$ such that the family
\[\{F, Fg_1, \cdots, Fg_d\}\ {\rm and}\ \{F, g_1F, \cdots, g_dF\}\]
are pairwise disjoint.
\end{lem}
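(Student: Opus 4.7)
The plan is a straightforward induction on $d$, exploiting the fact that at each stage the set of ``forbidden'' candidates for the next $g_k$ is a finite union of translates of the finite set $F^{-1}F \cup FF^{-1}$, while $H$ is infinite.

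First I would translate each disjointness condition into a membership condition on $g_k$. For any group elements $a, b \in G$, we have $Fa \cap Fb \neq \varnothing$ iff $b \in F^{-1}F\,a$, and $aF \cap bF \neq \varnothing$ iff $b \in a\,FF^{-1}$. Taking $a = e$ gives $F \cap Fg_k \neq \varnothing$ iff $g_k \in F^{-1}F$ and $F \cap g_kF \neq \varnothing$ iff $g_k \in FF^{-1}$.

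Now I proceed inductively. For $d = 0$ there is nothing to prove. Suppose $g_1,\dots,g_{k-1} \in H$ have been chosen so that both families $\{F, Fg_1,\dots,Fg_{k-1}\}$ and $\{F, g_1F,\dots,g_{k-1}F\}$ are pairwise disjoint. To extend the choice to $g_k$, we need only avoid the finite set
\[
B_k \;=\; F^{-1}F \;\cup\; FF^{-1} \;\cup\; \bigcup_{i=1}^{k-1} F^{-1} F\, g_i \;\cup\; \bigcup_{i=1}^{k-1} g_i\, F F^{-1}.
\]
Each of the $2(k-1)+2$ sets in this union has cardinality at most $|F|^2$, so $|B_k| < \infty$. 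Since $H$ is infinite, we may pick $g_k \in H \setminus B_k$; by the reformulation above, this choice guarantees that all the new disjointness conditions $F \cap Fg_k = \varnothing$, $F \cap g_kF = \varnothing$, $Fg_i \cap Fg_k = \varnothing$ and $g_iF \cap g_kF = \varnothing$ hold for every $i < k$. After $d$ such steps we obtain the required elements $g_1,\dots,g_d \in H$.

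There is no real obstacle here, only bookkeeping: the only thing to check is that the forbidden set $B_k$ is actually finite at each stage, which follows immediately from $|F| < \infty$ and $k \le d < \infty$. The infinitude of $H$ is used exactly once per induction step, to select $g_k \in H \setminus B_k$.
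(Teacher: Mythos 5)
Your proof is correct and follows essentially the same route as the paper: the paper likewise chooses $g_1\in(F^{-1}F)^c\cap(FF^{-1})^c$ and then inductively picks $g_{k+1}\in H$ avoiding $\bigcap_{0\le i\le k}(F^{-1}Fg_i)^c\cap(g_iFF^{-1})^c$ (with $g_0=e$), which is exactly your forbidden set $B_{k+1}$. The translation of the disjointness conditions into membership in translates of $F^{-1}F$ and $FF^{-1}$ is accurate, so nothing is missing.
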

\begin{proof}
For $i=1$, we choose $g_1\in H$ such that
\[g_1\in (F^{-1}F)^c\cap(FF^{-1})^c.\]
Since $H$ is an infinite set and $F$ is finite, this is possible. Now assume that $g_1, g_2, \cdots, g_k$ has been chosen so that
\[\{Fg_0, Fg_1, \cdots, Fg_k\}\ {\rm and}\ \{g_0F, g_1F, \cdots, g_kF\}\ ({\rm with\ }g_0=e)\]
are pairwise disjoint. We then choose $g_{k+1}\in H$ such that
\[g_{k+1}\in \bigcap_{0\le i\le k}(F^{-1}Fg_i)^c\cap(g_iFF^{-1})^c.\]
Note that this is also feasible since $F^{-1}Fg_i$ and  $g_iFF^{-1}$ are finite for every $0\le i\le k$.
\end{proof}

\begin{thm}\label{cmk}
Let $(X, G)$ be a topological dynamical system, where $X$ is a compact metric space and $G$ a countably infinite group with $|Z(F)|=\infty$ for every finite subset $F\subset G$. Let $\{F_n\}\subset G$ be a  symmetric F$\o$lner sequence such that $(G, \{F_n\})$ satisfies (WAFC) with respect to $L_G$.

Suppose that $(TSBP\le d)$. Then for any $n\ge1$, there exists group elements 
\[v_1, v_2, \cdots, v_L\in G\ {\rm and}\ g_0, g_1, \cdots, g_d\in G\]
and an open set $O$ such that

$\bullet$ For every $1\le i\le L$ and $0\le j\le d$, $O$ is an $F_nv_i^{-1}g_j$-marker;

$\bullet$ $X=\bigcup_{0\le j\le d}\bigcup_{1\le i\le L}\bigcup_{g\in F_n}gv_i^{-1}g_jO$.

\noindent In particular, $G$ has the $L_G(d+1)$-controlled marker property, where $L_G$ is the constant in Definition \ref{AFC}.
\end{thm}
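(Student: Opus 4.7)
The strategy is to invoke Corollary \ref{4.5} with parameters tailored to the (WAFC) structure of $\{F_n\}$, and then use the centrality hypothesis $|Z(F)|=\infty$ to force the auxiliary elements $g_j$ to commute with $F_n v_i^{-1}$. This commutation is precisely what upgrades the $F_n v_i^{-1}$-markers that Theorem \ref{4.4} readily supplies into the desired $F_nv_i^{-1}g_j$-markers, compensating for the lack of commutativity of $G$.

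First I would unwind (WAFC): set $L:=L_{F_n}\le L_G$ and, by Definition \ref{AG}, choose $v_1,\ldots,v_L\in G$ with $F_n^{-1}F_n\subset\bigcup_{i=1}^L v_iF_n$. Taking inverses and using symmetry of $F_n$ yields the dual covering $F_nF_n^{-1}\subset\bigcup_{i=1}^L F_nv_i^{-1}$, which will ultimately deliver the covering bullet. Put $h_0:=e$, $h_j:=v_j^{-1}$ for $1\le j\le L$, and $\tilde F:=\bigcup_{j=0}^L F_nh_j$. Using $|Z(F)|=\infty$ for every finite $F\subset G$, the centralizer $H:=Z(\{v_1,\ldots,v_L\}\cup F_n)$ is an infinite subgroup of $G$. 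Lemma \ref{ct}, applied with ambient finite set $\tilde F^{-1}\tilde F$ and with elements drawn from the infinite set $H$, produces $g_0=e,g_1,\ldots,g_d\in H$ such that both families $\{\tilde F^{-1}\tilde F\,g_j\}_{j=0}^d$ and $\{g_j\,\tilde F^{-1}\tilde F\}_{j=0}^d$ are pairwise disjoint, i.e., the disjointness hypothesis of Lemma \ref{4.3} holds. Thus Corollary \ref{4.5} is applicable and outputs an open set $O$ such that (a) $O$ is an $F_nh_j$-marker for each $j=0,\ldots,L$ (in particular an $F_n$-marker via $h_0=e$, and an $F_nv_j^{-1}$-marker for $j\ge 1$, because $\overline O$ is $(\tilde F,1)$-disjoint and $F_n h_j\subset\tilde F$), and (b) $X=\bigcup_{g\in M_1}gO$ for $M_1=\bigsqcup_{j=0}^d F_nF_n^{-1}g_j$.

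It remains to verify the two bullets. For the marker condition, since each $g_j\in H$ commutes with every element of $F_n\cup\{v_1,\ldots,v_L\}$, we have $F_nv_i^{-1}g_j=g_jF_nv_i^{-1}$, so the family $\{g\overline O:g\in F_nv_i^{-1}g_j\}$ coincides with the $g_j$-image of $\{h\overline O:h\in F_nv_i^{-1}\}$; this is disjoint because the latter is, by the $F_nv_i^{-1}$-marker property from (a), and $g_j$ acts as a homeomorphism. For the covering, chaining $F_nF_n^{-1}\subset\bigcup_iF_nv_i^{-1}$ with (b) gives $M_1\subset\bigcup_{i,j}F_nv_i^{-1}g_j$, whence $X=\bigcup_{g\in M_1}gO\subset\bigcup_{i,j}\bigcup_{f\in F_n}fv_i^{-1}g_jO$. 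The ``in particular'' clause follows immediately: $O$ is an $F_n$-marker by (a), and $B:=\{v_i^{-1}g_j:1\le i\le L,\,0\le j\le d\}$ has cardinality at most $L(d+1)\le L_G(d+1)$ and satisfies $X=(F_nB)O$.

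The main obstacle is the simultaneous demand on $g_1,\ldots,g_d$: they must be spread out enough for Lemma \ref{4.3} and yet all lie inside the centralizer of a prescribed finite set. Without the hypothesis $|Z(F)|=\infty$ for every finite $F\subset G$, this centralizer could be finite (even trivial), and then the pivotal identity $F_nv_i^{-1}g_j=g_jF_nv_i^{-1}$ would collapse, breaking the transfer from the $F_nv_i^{-1}$-marker property to the $F_nv_i^{-1}g_j$-marker property. In this sense, the centrality condition on $G$ is exactly the surrogate for abelianness that allows the argument of Theorem 4.6 in \cite{Sza} to go through in our more general non-commutative setting.
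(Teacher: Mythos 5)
Your proposal is correct and follows essentially the same route as the paper: extract the covering $F_nF_n^{-1}\subset\bigcup_i F_nv_i^{-1}$ from (WAFC) and symmetry, pick $g_0=e,g_1,\dots,g_d$ in the (infinite) centralizer of the finite set $\tilde F_n$ via Lemma \ref{ct}, feed this into Corollary \ref{4.5} to get the marker $O$ and the covering by $M_1$, and then use the commutation $gv_i^{-1}g_j=g_jgv_i^{-1}$ to upgrade the $F_nv_i^{-1}$-marker property to the $F_nv_i^{-1}g_j$-marker property. The only cosmetic difference is that you centralize $\{v_1,\dots,v_L\}\cup F_n$ rather than $\tilde F_n$ itself, which serves the same purpose.
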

\begin{proof}
Fix $n\ge1$. Since $\{F_n^{-1}\}$ are approximate groups with a uniform constant $L=L_G$, there are elements $v_1, v_2,\cdots, v_L\in G$ for which
\[F_nF_n^{-1}\subset \bigcup_{1\le i\le L}v_iF_n^{-1}.\]
Taking the inverse yields that
\[F_nF_n^{-1}\subset\bigcup_{1\le i\le L}F_nv_i^{-1}.\]
Let $\tilde{F_n}=\bigcup_{0\le i\le L}F_nv_i^{-1}\subset G$ with $v_0=e$. Note that $\tilde{F_n}$ is a finite set. By the assumption, we know that $|Z(\tilde{F_n})|=\infty$. By Lemma \ref{ct}, we can then choose elements $e=g_0, g_1,\cdots, g_d\in Z(\tilde{F_n})$ such that the families
\[\{\tilde{F_n}^{-1}\tilde{F_n}g_0, \tilde{F_n}^{-1}\tilde{F_n}g_1, \cdots, \tilde{F_n}^{-1}\tilde{F_n}g_d\}\ {\rm and}\ \{g_0\tilde{F_n}^{-1}\tilde{F_n}, g_1\tilde{F_n}^{-1}\tilde{F_n}g_1, \cdots, g_d\tilde{F_n}^{-1}\tilde{F_n}\}\]
are both pairwise disjoint. Let 
\[M_1=\bigcup_{0\le j\le d}F_nF_n^{-1}g_j.\]
By Corollary \ref{4.5}, there is an open set $O$ with the following properties:

$\bullet$ $O$ is an $F_nv_i^{-1}$-marker for all $i=1,2,\cdots, L$;

$\bullet$ $X=\bigcup_{g\in M_1}gO$.

\noindent Note that we have
\begin{align*}
M_1=\bigcup_{0\le j\le d}F_nF_n^{-1}g_j&\subset\bigcup_{0\le j\le d}\bigcup_{1\le i\le L}F_nv_i^{-1}g_j,
\end{align*}
which shows that
\[X=\bigcup_{g\in M_1}gO\subset\bigcup_{0\le j\le d}F_nF_n^{-1}g_jO\subset\bigcup_{0\le j\le d}\bigcup_{1\le i\le L}\bigcup_{g\in F_n}gv_i^{-1}g_jO.\]
To show that $O$ is an $F_nv_i^{-1}g_j$-marker, we note that since $g_j\in Z(\tilde{F_n})\subset Z(F_nv_i^{-1})$, we have $gv_i^{-1}g_j=g_jgv_i^{-1}$ for all $g\in F_n, i=1,2,\cdots, L$ and $j=0,1,\cdots, d$. Suppose that
\[g_jgv_i^{-1}(\overline{O})\cap g_jg'v_i^{-1}(\overline{O})\ne\varnothing\]
for some distinct $g,g'\in F_n$, then clearly $gv_i^{-1}(\overline{O})\cap g'v_i^{-1}(\overline{O})\ne\varnothing$, which contradicts to the assumption that $\overline{O}$ is $(F_nv_i^{-1}, 1)$-disjoint.

Finally, to show that $G$ has the $L_G(d+1)$-controlled marker property, we set $B=\{v_i^{-1}g_j: 0\le j\le d, 1\le i\le L_G\}$(note that $L=L_G$), which completes the proof.
\end{proof}

\begin{cor}\label{frd}
Let $(X, G)$ be a topological dynamical system, where $X$ is a compact metric space and $G$ a countably infinite group with $|Z(F)|=\infty$ for every finite subset $F\subset G$. Let $\{F_n\}\subset G$ be a  symmetric F$\o$lner sequence such that $(G, \{F_n\})$ satisfies (WAFC) with respect to $L_G$.

If $(X,G)$ is free and ${\rm dim}(X)<\infty$, then $(X,G)$ has finite topological Rokhlin dimension. In particular,
\[{\rm dim}_{\rm Rok}(X,G)\le L_G({\rm dim}(X)+1)-1.\]
\end{cor}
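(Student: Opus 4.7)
The plan is to extract a Rokhlin cover directly from the $L_G(\dim(X)+1)$-controlled marker property produced by Theorem \ref{cmk}. Set $d:=\dim(X)$. The first task is to verify that $(X,G)$ satisfies the hypothesis $(TSBP\le d)$ of Theorem \ref{cmk}. Since $(X,G)$ is free and $X$ is a compact metric space of finite covering dimension $d$, this is a standard dimension-theoretic fact: for any open sets with $\overline{U}\subset V$ one can find $\overline{U}\subset U_0\subset V$ with $\dim(\partial U_0)\le d-1$, and freeness of the action together with a general-position argument then bounds the intersection multiplicity of the $G$-translates of $\partial U_0$ (this is essentially Proposition 3.4 of \cite{Sza}).

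Next, for each fixed $n\ge 1$, I apply Theorem \ref{cmk} with this value of $d$ to obtain group elements $v_1,\ldots,v_{L_G}\in G$ and $e=g_0,g_1,\ldots,g_d\in G$ together with an open set $O\subset X$ such that $O$ is an $F_n v_i^{-1}g_j$-marker for every admissible pair $(i,j)$ and
\[
X \;=\; \bigcup_{0\le j\le d}\,\bigcup_{1\le i\le L_G}\,\bigcup_{g\in F_n} g\,v_i^{-1}g_j\,O.
\]
I then define, for each pair $(i,j)$ with $1\le i\le L_G$ and $0\le j\le d$, the open set $U_{i,j}:=v_i^{-1}g_j\,O$, and relabel these $L_G(d+1)$ sets as $U_0,U_1,\ldots,U_D$ with $D:=L_G(d+1)-1$.

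It then remains to verify the two requirements of Definition \ref{Rokdim}. For condition (i), the family $\{g\overline{U_{i,j}}:g\in F_n\}$ equals $\{g v_i^{-1}g_j\overline{O}:g\in F_n\}$, which is precisely the collection of $(F_n v_i^{-1}g_j)$-translates of $\overline{O}$; this is pairwise disjoint because $O$ is an $F_n v_i^{-1}g_j$-marker. For condition (ii), the union $\bigcup_{i,j} F_n U_{i,j}$ is exactly the set on the right-hand side of the display above, which equals $X$. Thus ${\rm dim}_{\rm Rok}(X,G)\le D=L_G(\dim(X)+1)-1$, as claimed.

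The only non-routine input is the passage from $\dim(X)<\infty$ and freeness to $(TSBP\le d)$; this is the conceptual content one must import from classical dimension theory. Once that is in place, the rest of the argument is a direct repackaging of the output of Theorem \ref{cmk}, namely a relabeling of the $L_G(d+1)$ translates $v_i^{-1}g_jO$ of the marker, so I do not anticipate further obstacles.
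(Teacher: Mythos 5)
Your proposal is correct and follows essentially the same route as the paper's own proof: both deduce $(TSBP\le \dim(X))$ from freeness and finite covering dimension via the corresponding result in \cite{Sza}, invoke Theorem \ref{cmk} to obtain the marker $O$ together with the elements $v_i^{-1}g_j$, and then take the $L_G(\dim(X)+1)$ translates $v_i^{-1}g_jO$ as the tower bases, with disjointness of each tower coming directly from the $F_nv_i^{-1}g_j$-marker property. The only cosmetic difference is the reference cited for the passage from freeness plus finite dimension to $(TSBP\le d)$ (the paper uses Theorem 3.8 of \cite{Sza}).
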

\begin{proof}
Let $n\in\N$. Since $(X, G)$ is free, applying Theorem 3.8 in \cite{Sza}, there is a natural number $d={\rm dim}(X)$ for which $(X,G)$ satisfies $(TSBP\le d)$. By Theorem \ref{cmk}, there are group elements
\[v_1, v_2,\cdots, v_L\in G\ \ {\rm and}\ \ g_0, g_1,\cdots, g_d\in G\]
and an open set $O\subset X$ such that

$\bullet$ The natural number $L=L_G$;

$\bullet$ For every $1\le i\le L$ and $0\le j\le d$, $O$ is an $F_nv_i^{-1}g_j$-marker;

$\bullet$ $X=\bigcup_{0\le j\le d}\bigcup_{1\le i\le L}\bigcup_{g\in F_n}gv_i^{-1}g_jO$.

\noindent Now for every $0\le j\le d$ and $1\le i\le L$, we define the base $U_{i,j}$ and shape $S_{i,j}$ by
\[U_{i,j}=v_i^{-1}g_jO\ \ {\rm and}\ \ S_{i,j}=F_n.\]
 Then we have
 \[X=\bigcup_{0\le j\le d}\bigcup_{1\le i\le L}F_nU_{i,j}.\]
 Now it suffices to show that each tower in the so defined castle is indeed pairwise disjoint. But note that $O$ is an $F_nv_i^{-1}g_j$-marker for every $0\le j\le d$ and $1\le i\le L$, which follows by definition that $\overline{O}$ is $(F_nv_i^{-1}g_j, 1)$-disjoint, or in other words, $v_i^{-1}g_j\overline{O}=\overline{v_i^{-1}g_jO}$ is $(F_n,1)$-disjoint.
 
 Finally, since there are $L(d+1)$ towers, where $L$ and $d$ are numbers independent to $F_n$, we have ${\rm dim}_{Rok}(X,G)\le L(d+1)-1=L_G({\rm dim}(X)+1)-1<\infty$.
\end{proof}
Now by combining Corollary \ref{hg} and Corollary \ref{frd}, we have the following Corollary.

\begin{cor}\label{hsb}
Let $n\ge1$ and $X$ be a compact metric space with finite covering dimension. Then every free topological dynamical system $(X, \mathbb{H}_{2n+1}(\Z)\times G)$ has finite topological Rokhlin dimension, where $\mathbb{H}_{2n+1}(\Z)$ is the discrete Heisenberg group, and $G$ is any of a finite group, a finitely generated abelian group, a locally finite group or a countable group satisfying (WAFC) with a symmetric F$\o$lner sequence.
\end{cor}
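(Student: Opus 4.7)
The plan is to reduce everything to Corollary \ref{frd} by verifying its hypotheses for $H := \mathbb{H}_{2n+1}(\Z)\times G$. There are three ingredients to check: (a) $H$ is countably infinite with $|Z(F)|=\infty$ for every finite $F\subset H$; (b) $H$ admits a symmetric F\o lner sequence $(E_l)$ with $(H,\{E_l\})$ satisfying (WAFC); (c) the dynamical system is free and $X$ has finite covering dimension (the latter two being already hypothesized).

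First, I would handle the centralizer condition. Recall that $Z(\mathbb{H}_{2n+1}(\Z))$ is infinite cyclic. For any finite $F\subset\mathbb{H}_{2n+1}(\Z)\times G$, every element of the form $(z,e_G)$ with $z\in Z(\mathbb{H}_{2n+1}(\Z))$ commutes with each $(a,b)\in F$, since $za=az$ and $e_G b=b e_G$. Hence $Z(F)\supseteq Z(\mathbb{H}_{2n+1}(\Z))\times\{e_G\}$, which is infinite. This handles (a) uniformly for all four choices of $G$.

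Next I would assemble a symmetric F\o lner sequence of approximate groups in $H$. Corollary \ref{hg} supplies a F\o lner sequence $\{F_l\}$ in $\mathbb{H}_{2n+1}(\Z)$ such that both $(\mathbb{H}_{2n+1}(\Z),\{F_l\})$ and $(\mathbb{H}_{2n+1}(\Z),\{F_l^{-1}\})$ satisfy (WAFC). Setting $\widetilde F_l=F_l\cup F_l^{-1}$ yields a symmetric sequence, which remains F\o lner (a union of a left-F\o lner and a right-F\o lner sequence), and by Proposition \ref{const} it is a sequence of approximate groups with $L_{\widetilde F_l}\le 2L_{F_l}L_{F_l^{-1}}+L_{F_l}+L_{F_l^{-1}}$, uniformly bounded. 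For $G$, each of the four cases either satisfies (SAFC) or (WAFC) with a symmetric F\o lner sequence $\{H_l\}$: finite groups, finitely generated abelian groups, and locally finite groups are covered by Example \ref{eg} (with the natural choices of symmetric F\o lner sets), while the last case is hypothesized. Then Corollary \ref{dp} (applied to the (WAFC) clause) shows that the direct product $H=\mathbb{H}_{2n+1}(\Z)\times G$ satisfies (WAFC) with the F\o lner sequence $E_l=\widetilde F_l\times H_l$, which is symmetric because each factor is.

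Finally, with $(X,H)$ free, $\dim(X)<\infty$, and $H$ satisfying all the hypotheses above, Corollary \ref{frd} applies directly and gives
\[\mathrm{dim}_{\mathrm{Rok}}(X,H)\le L_H(\dim(X)+1)-1<\infty,\]
where $L_H$ is the uniform constant for $(H,\{E_l\})$. The only potentially subtle point is the symmetry of the F\o lner sequence on $\mathbb{H}_{2n+1}(\Z)$: the sequence produced in Example \ref{Hei}/Corollary \ref{hg} is \emph{not} itself symmetric (the inverse of an element in the semidirect-product box stretches the $\Z^{2n}$-coordinate), so the symmetrization via Proposition \ref{const} is where the real work occurs; once that is in hand, the rest is a bookkeeping assembly of earlier results.
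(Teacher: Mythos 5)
Your proposal is correct and follows exactly the route the paper intends: the paper's own ``proof'' of Corollary \ref{hsb} is simply the one-line instruction to combine Corollary \ref{hg} with Corollary \ref{frd}, and you have filled in precisely the routine verifications that this combination requires (the centralizer condition via the infinite center $Z(\mathbb{H}_{2n+1}(\Z))\times\{e_G\}$, the symmetrization of the Heisenberg F\o lner sequence via Proposition \ref{const}, and the passage to the direct product via Corollary \ref{dp}). The only cosmetic remark is that the symmetrized sequence $\widetilde F_l=F_l\cup F_l^{-1}$ is left-F\o lner because Corollary \ref{hg} already asserts that $\{F_l^{-1}\}$ is itself a F\o lner sequence, which is the cleaner justification for the parenthetical you give.
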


\begin{cor}\label{am}
In Theorem \ref{cmk}, if we choose sufficiently large $N$ and  $N'$ such that,
\[F_n\subset \F_n=\bigcup_{0\le i\le L}F_nv_i^{-1}\subset F_N,\ \ F_n^2\subset F_N,\ \ {\rm and}\ \ \bigcup_{0\le i\le L}F_Nv_i^{-1}\subset F_{N'}\]
with $v_0=e$, replace $\tilde{F}_n$ with $F_{N'}$, and take group elements $g_i\,(i=0,1,\cdots,d)\in Z(F_{N'})\subset Z(F_N)\subset Z(\F_n)$, then we can even get the following stronger result, which is unexpectedly crucial to the calculation of amenability dimension later.

Keeping the same assumption. Then for every $n\ge1$, we can choose another natural number $N\in(n,\infty)$ such that $F_n^2\subset F_N$, and there exists a Rokhlin cover 
\[\mathcal{R}=\{gU_i: g\in F_n, i=0,1,\cdots,L(d+1)\}\]
with shapes $F_n$, and every (closed) base $\overline{U_i}$ being $(F_N,1)$-disjoint, and of course, also $(F_n,1)$-disjoint since $F_n\subset F_n^2\subset F_N$.
\end{cor}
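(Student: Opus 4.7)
The plan is to re-run the proof of Theorem \ref{cmk} essentially verbatim, but enlarging every auxiliary finite set by one ``layer'' in the F\o lner sequence, so that the resulting Rokhlin bases are disjoint under $F_N$-translations rather than merely under $F_n$-translations. The point is that in the original proof, $(F_n,1)$-disjointness of each $v_i^{-1}g_j\overline{O}$ came from $(\F_n,1)$-disjointness of $\overline{O}$ together with $g_j\in Z(\F_n)$. If we can instead make $\overline{O}$ be $(F_{N'},1)$-disjoint for an even larger F\o lner set $F_{N'}$ and pick $g_j\in Z(F_{N'})$, then by the same commutativity manipulation we will upgrade $(F_n,1)$-disjointness to $(F_N,1)$-disjointness.

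Concretely, I would proceed as follows. First, keeping the elements $v_1,\ldots,v_L\in G$ from the proof of Theorem \ref{cmk} (those with $F_nF_n^{-1}\subset\bigcup_i F_n v_i^{-1}$), I choose $N>n$ in the F\o lner sequence large enough so that $F_n^2\subset F_N$ and $\F_n=\bigcup_{0\le i\le L}F_nv_i^{-1}\subset F_N$, and then $N'>N$ large enough so that $\bigcup_{0\le i\le L}F_Nv_i^{-1}\subset F_{N'}$. Since $F_{N'}$ is finite, by hypothesis $Z(F_{N'})$ is infinite, so Lemma \ref{ct} provides $e=g_0,g_1,\ldots,g_d\in Z(F_{N'})$ for which the families $\{F_{N'}^{-1}F_{N'}g_j\}$ and $\{g_jF_{N'}^{-1}F_{N'}\}$ are pairwise disjoint. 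Next, using the remark after Corollary \ref{4.5} (which permits any finite set containing $F$ in place of $\tilde F$), I apply Corollary \ref{4.5} with $F=F_N$ and with the ``fat'' set taken to be $F_{N'}$, producing an open set $O$ with $\overline{O}$ being $(F_{N'},1)$-disjoint and with $X=\bigcup_{g\in M_1}gO$ where $M_1=\bigcup_{0\le j\le d}F_nF_n^{-1}g_j$. Finally I set $U_{i,j}:=v_i^{-1}g_jO$ (giving $L(d+1)$ bases, all with shape $F_n$); the covering $X=\bigcup_{i,j}F_nU_{i,j}$ follows from $M_1\subset\bigcup_{i,j}F_nv_i^{-1}g_j$ exactly as in Theorem \ref{cmk}.

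The only substantive step is verifying $(F_N,1)$-disjointness of each $\overline{U_{i,j}}$, and this is where the careful choice of $N$ and $N'$ pays off. If $g,g'\in F_N$ are distinct and $g\,v_i^{-1}g_j\overline{O}\cap g'v_i^{-1}g_j\overline{O}\ne\varnothing$, then since $g_j\in Z(F_{N'})\subset Z(F_N)$ commutes with both $g,g'\in F_N$ and with $v_i^{-1}\in\F_n\subset F_N$, we may pull $g_j$ out on the left and cancel it to get $gv_i^{-1}\overline{O}\cap g'v_i^{-1}\overline{O}\ne\varnothing$. But $gv_i^{-1},g'v_i^{-1}\in F_Nv_i^{-1}\subset F_{N'}$ are distinct, contradicting the $(F_{N'},1)$-disjointness of $\overline{O}$. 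The main (minor) obstacle here is simply bookkeeping: one must be sure to enlarge $F$ enough that $F_Nv_i^{-1}$ sits inside $F_{N'}$ and that $Z(F_{N'})$ still commutes with every translate appearing in the computation; once this is arranged, the rest is mechanical.
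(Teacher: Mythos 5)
Your proposal is exactly the paper's intended argument: the paper gives no separate proof of Corollary \ref{am} beyond the recipe embedded in its statement (enlarge the fat set to $F_{N'}$, take $g_j\in Z(F_{N'})$, and rerun Theorem \ref{cmk}), and your verification of $(F_N,1)$-disjointness by commuting $g_j$ past $g,g'\in F_N$ and $v_i^{-1}$ and then invoking $F_Nv_i^{-1}\subset F_{N'}$ is precisely the point of the hypotheses on $N$ and $N'$. One bookkeeping slip: you should apply Corollary \ref{4.5} with the \emph{small} set $F=F_n$ (taking the $h_j$'s so that $\tilde F\supseteq F_{N'}$), not with $F=F_N$ --- otherwise $M_1$ would be $\bigcup_j F_NF_N^{-1}g_j$ rather than the $\bigcup_j F_nF_n^{-1}g_j$ you (correctly) use, and the towers would not have shape $F_n$; since you state the correct $M_1$ and covering, this does not affect the substance.
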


\section{The $C^*$-algebra of $(X,G)$ with $G$ satisfying (WAFC)}\label{sec:5}
\subsection{Finite amenability dimension from the Rokhlin cover}
\begin{lem}[Lemma 7.3, \cite{SWZ}]
Let $G$ be a countable, discrete group, $X$ a locally compact Hausdorff space, and $d\in\N$ a natural number. Let $\af: G\curvearrowright X$ be a free action and $\overline{\af}: G\curvearrowright C_0(X)$ the induced action on the $C^*$-algebra. Then ${\rm dim}_{\rm am}(\overline{\af})\le d$ if and only if for every $\varepsilon>0$, every finite subset $M\Subset G$ and every compact subset $K\subset X$, there exists finitely supported maps $\mu^{(l)}: G\to C_c(X)_{1,+}$ for $l=0,1,\cdots,d$ satisfying:

(a) $\sum_{0\le l\le d}\sum_{g\in G}\mu_g^{(l)}\le {\bf 1}_X$ and $\sum_{0\le l\le d}\sum_{g\in G}\mu_g^{(l)}|_K={\bf 1}_K$;

(b) $\mu_g^{(l)}\mu_h^{(l)}=0$ for all $l=0,1,\cdots,d$ and $g\ne h$ in $G$;

(c) $\mu_h^{(l)}\circ g^{-1}=_{\varepsilon}\mu_{gh}^{(l)}$ for all $l=0,1,\cdots,d$, $g\in M$ and $h\in G$.

\noindent When $X$ is compact, it suffices to check the conditions for $K=X$.
\end{lem}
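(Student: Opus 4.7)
The plan is to unpack the general definition of amenability dimension in the special case where the $C^*$-algebra is $C_0(X)$ and the action $\overline{\af}$ is induced by a free $\af:G\curvearrowright X$, using the explicit formula $\overline{\af}_g(f)=f\circ g^{-1}$. The very first simplification is that condition (d) in the general definition (the commutator estimate) is automatically satisfied, since $C_0(X)$ is commutative and all the $\mu_g^{(l)}$ take values there. Hence only the conditions (a), (b), (c) need to be matched with (a), (b), (c) of the lemma.

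For the implication $(\Leftarrow)$, given $F\Subset C_0(X)$ and $\varepsilon>0$ required by the general definition, I would choose $K\subset X$ compact so that each $a\in F$ is $\varepsilon$-supported in $K$ (using $X$ compact if needed), pick a tolerance $\varepsilon'\ll\varepsilon$, and feed $(K,M,\varepsilon')$ into the simplified hypothesis. Condition (a) of the lemma then gives $(\sum_{l,g}\mu_g^{(l)})\cdot a=_\varepsilon a$ for every $a\in F$, and (b) transcribes verbatim. For the general (c), the key point is that for each fixed $l$ the orthogonality (b) forces the functions $\{\mu_h^{(l)}\}_{h\in G}$ to have pairwise disjoint supports, so that at each point $x\in X$ at most one term of $\sum_{h\in E}\mu_h^{(l)}\circ g^{-1}$ and at most one term of $\sum_{h\in E}\mu_{gh}^{(l)}$ is nonzero. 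Combining this with the pointwise estimate $\mu_h^{(l)}\circ g^{-1}=_{\varepsilon'}\mu_{gh}^{(l)}$ applied to these two (possibly distinct) indices yields the uniform bound $\|\sum_{h\in E}(\overline{\af}_g(\mu_h^{(l)})-\mu_{gh}^{(l)})\cdot a\|\le 2\varepsilon'\|a\|$ independently of $E\Subset G$, which is precisely the general (c).

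For the direction $(\Rightarrow)$, given a compact $K$, a finite set $M\Subset G$, and $\varepsilon>0$, I would apply the general definition with $F$ chosen to contain cutoff functions identically $1$ on $K\cup\bigcup_{g\in M}gK$, extract maps $\tilde\mu^{(l)}$, and then massage them into the simplified form: orthogonality (b) comes for free; the inequality $\sum_{l,g}\tilde\mu_g^{(l)}\le 1_X$ is enforced by a truncation using functional calculus (possible because within each level the functions have pairwise orthogonal supports), and the equality on $K$ then follows from (a) of the general definition applied to the cutoff on $K$; the pointwise estimate (c) of the lemma is a consequence of (c) of the general definition applied to cutoffs concentrated around the relevant point, with freeness of $\af$ ensuring that distinct orbit elements of the support of $\tilde\mu^{(l)}$ are actually separated in $X$.

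The main obstacle is the mismatch between the norm estimate over a finite sum in the general condition (c) and the pointwise estimate in the simplified (c); orthogonality (b) is exactly what reconciles them, since without it the errors could accumulate across the finite set $E$. The delicate bookkeeping is therefore in the forward direction, where support disjointness must be combined with freeness of the action to convert a $C^*$-algebraic equivariance statement back into a pointwise statement on $X$.
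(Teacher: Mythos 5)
A preliminary remark on the comparison itself: the paper offers no proof of this statement at all --- it is imported verbatim as Lemma 7.3 of \cite{SWZ} and used as a black box in the proof of Theorem \ref{fad}. So there is no in-paper argument to set yours against, and I can only judge the proposal on its own terms. Your backward implication is correct and contains the one real idea: since the $\mu_h^{(l)}$ are positive and pairwise orthogonal within each level $l$, their supports (and the supports of their translates) are pairwise disjoint, so at any point $x$ the sum $\sum_{h\in E}\bigl(\mu_h^{(l)}(g^{-1}x)-\mu_{gh}^{(l)}(x)\bigr)$ has at most one nonvanishing term from each of the two families, each of modulus at most $\varepsilon'$ by the pointwise hypothesis; hence the estimate is uniform in $E\Subset G$, exactly as you claim. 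The observation that condition (d) of the general definition is vacuous in the commutative algebra $C_0(X)$ is also correct.

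The forward implication, as written, has a genuine circularity in the locally compact case. The general condition (c) only controls $\|(\overline{\af}_g(\tilde\mu_h^{(l)})-\tilde\mu_{gh}^{(l)})\cdot a\|$ for $a$ ranging over the finite set $F$, and $F$ must be fixed \emph{before} the maps $\tilde\mu^{(l)}$ are handed to you. Your proposal to upgrade this to the full-norm estimate $\|\tilde\mu_h^{(l)}\circ g^{-1}-\tilde\mu_{gh}^{(l)}\|\le\varepsilon$ by applying (c) to ``cutoffs concentrated around the relevant point'' cannot work as stated, because those cutoffs would have to be adapted to the (unknown) supports of the $\tilde\mu^{(l)}$ and therefore cannot be placed in $F$ in advance. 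The same problem recurs in the renormalization: to see that multiplying by a function of $s=\sum_{l,g}\tilde\mu_g^{(l)}$ (to force $\sum_{l,g}\mu_g^{(l)}|_K=\mathbf{1}_K$ exactly) does not destroy condition (c), you need $\|s\circ g^{-1}-s\|$ to be small in full norm, which again is only available after cutting down by elements of $F$. When $X$ is compact both difficulties evaporate because $\mathbf{1}_X$ may be put into $F$ --- and that is the only case this paper uses --- but the lemma is stated for locally compact $X$ and your argument does not yet cover it. Finally, freeness of the action does no work in the step where you invoke it; it is an assumption carried over from the definition of ${\rm dim}_{\rm am}$ in \cite{SWZ}, not something the passage between the two sets of conditions requires.
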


The following theorem is of a similar fashion to Theorem 7.4 in \cite{SWZ}, except that we will take a “sufficiently nice” Rokhlin cover to complete the proof. This is in fact based on Corollary \ref{am}, which is crucial on a $C^*$-level as we have mentioned.
\begin{thm}\label{fad}
Let $(X, G)$ be a topological dynamical system, where $X$ is a compact metric space and $G$ a countably infinite group with $|Z(F)|=\infty$ for every finite subset $F\subset G$. Let $\{F_n\}\subset G$ be a  symmetric F$\o$lner sequence such that $(G, \{F_n\})$ satisfies (WAFC) with respect to $L_G$.

If the system $(X,G)$ is free and ${\rm dim}(X)=d<\infty$, then the induced $C^*$-algebraic action $\af: G\to {\rm Aut}(C(X))$ has finite amenability dimension. In particular,
\[{\rm dim}^{+1}_{\rm am}(\af)\le L(d+1),\]
where $L=L_G$ is the constant in Definition \ref{AFC}.
\end{thm}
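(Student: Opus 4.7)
The plan is to adapt the proof of Theorem 7.4 in \cite{SWZ}, replacing the standard Rokhlin cover with the finer one supplied by Corollary \ref{am}. The key leverage provided by Corollary \ref{am} is that each base $\overline{U_l}$ can be arranged to be $(F_N,1)$-disjoint for some $N\geq n$ with $F_n^2\subset F_N$, rather than merely $(F_n,1)$-disjoint; this extra room is exactly what allows translations by elements of the given finite set $M\subset G$ to be absorbed without destroying the disjointness structure. The other input is Lemma 7.3 of \cite{SWZ}, which reduces the verification of ${\rm dim}_{\rm am}(\overline{\af})\leq d$ to producing, for every $\varepsilon>0$ and every finite $M\Subset G$, finitely supported maps $\mu^{(l)}\colon G\to C(X)_{1,+}$ satisfying a normalization condition (a), an orthogonality condition (b), and an approximate-equivariance condition (c).

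First I would fix $\varepsilon>0$ and a finite subset $M\Subset G$, replace $M$ by $M\cup M^{-1}\cup\{e\}$ to assume symmetry, and use the F$\o$lner property of $\{F_n\}$ to choose $n$ large enough that $M\subset F_n$ and $|gF_n\bigtriangleup F_n|<\delta|F_n|$ for every $g\in M$, where $\delta>0$ will be fixed in terms of $\varepsilon$, $|M|$, and the number of colors $L_G(d+1)$. I would then invoke Corollary \ref{am} to produce $N>n$ with $F_n^2\subset F_N$ and open sets $U_0,U_1,\ldots,U_{L_G(d+1)-1}$ satisfying $X=\bigcup_{l}F_nU_l$ and $g\overline{U_l}\cap g'\overline{U_l}=\varnothing$ for distinct $g,g'\in F_N$.

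Next I would produce continuous functions $\phi_l\in C(X)_{1,+}$ supported in $U_l$ with $\sum_l\sum_{g\in F_n}\phi_l(g^{-1}\cdot)=\mathbf{1}_X$ on $X$; this is achieved by first choosing a partition of unity subordinate to the open cover $\{gU_l\}_{l,\,g\in F_n}$ of $X$ and then using the disjointness of $\{gU_l:g\in F_n\}$ (for each fixed $l$) to reorganize it into this equivariant form. Setting $\mu_g^{(l)}(x)=\phi_l(g^{-1}x)$ for $g\in F_n$ and $\mu_g^{(l)}=0$ otherwise, conditions (a) and (b) of Lemma 7.3 become immediate: (a) is the partition-of-unity identity and (b) follows from the $(F_n,1)$-disjointness of $\{gU_l:g\in F_n\}$ for each $l$.

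The main effort, and the principal obstacle, is checking the approximate equivariance (c), i.e. $\mu_h^{(l)}\circ g^{-1}=_{\varepsilon}\mu_{gh}^{(l)}$ for $g\in M$ and $h\in G$. A direct computation gives exact equality whenever both $h$ and $gh$ lie in $F_n$, since both sides then equal the function $x\mapsto\phi_l((gh)^{-1}x)$, so the entire discrepancy is concentrated on the F$\o$lner boundary $h\in F_n\bigtriangleup g^{-1}F_n$. Here the inclusion $MF_n\subset F_n^2\subset F_N$ (which uses $M\subset F_n$) together with the $(F_N,1)$-disjointness of $\overline{U_l}$ becomes crucial: it guarantees that the translated supports $ghU_l$ remain pairwise disjoint as $h$ ranges over this boundary set, so the boundary errors live in disjoint regions of $X$ and do not compound. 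Combining this structural disjointness with the F$\o$lner smallness $|gF_n\bigtriangleup F_n|<\delta|F_n|$ then controls the error by $\varepsilon$, and the asserted bound ${\rm dim}^{+1}_{\rm am}(\af)\leq L_G(d+1)$ reads off directly from the number of colors supplied by Corollary \ref{am}.
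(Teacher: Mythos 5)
Your overall architecture — reducing to conditions (a), (b), (c) of Lemma 7.3 in \cite{SWZ} and feeding in the refined Rokhlin cover from Corollary \ref{am} — is the same as the paper's, but the construction of the maps $\mu^{(l)}$ has a genuine gap, in fact two. First, the ``reorganization'' step does not work: given a partition of unity $\{\psi_{l,g}\}$ subordinate to $\{gU_l\}$, the disjointness of the levels of each tower only tells you that each $x$ lies in at most one level $g(x)U_l$ of tower $l$; it does not let you replace the $g$-dependent functions $\psi_{l,g}$ by a single $\phi_l$ supported in $U_l$ with $\sum_l\sum_{g\in F_n}\phi_l(g^{-1}x)\equiv 1$. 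Indeed, the function $x\mapsto \phi_l(g(x)^{-1}x)$ is by construction constant along each fiber $\{gy:g\in F_n\}$ over $y\in U_l$, whereas $x\mapsto\sum_g\psi_{l,g}(x)=\psi_{l,g(x)}(x)$ has no reason to be; so an exactly tower-equivariant partition of unity need not exist, and your (a) is unjustified. Second, even granting such $\phi_l$, condition (c) is a \emph{per-element} estimate: for $g\in M$ and $h\in F_n$ with $gh\notin F_n$ your construction gives $\|\mu_h^{(l)}\circ g^{-1}-\mu_{gh}^{(l)}\|=\|\phi_l\circ (gh)^{-1}\|=\|\phi_l\|_\infty$, which is of order $1$, not $\varepsilon$. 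The F{\o}lner condition only controls the \emph{number} of such boundary elements $h$ relative to $|F_n|$, which cannot rescue a bound that must hold for each $h$ individually; the disjointness of the translated supports is irrelevant to this.

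Both defects are repaired simultaneously by the Ces\`aro averaging that the paper (following Theorem 7.4 of \cite{SWZ}) uses: set
\[\mu^{(l)}_g=\frac{1}{|F_n|}\sum_{h\in F_n}\psi_{l,h^{-1}g}\circ h^{-1}.\]
Then (a) is an exact rearrangement of $\sum_{l,g}\psi_{l,g}=1$; (c) follows by reindexing and telescoping, with error $|F_n\bigtriangleup gF_n|/|F_n|\le\varepsilon$ because the averaging spreads the order-one boundary defect over $|F_n|$ summands; and (b) is precisely where Corollary \ref{am} is needed, since the summand $\psi_{l,h^{-1}g}\circ h^{-1}$ is supported in $gU_l$ with $g$ now ranging over $F_n^2\subset F_N$ rather than $F_n$, so the $(F_N,1)$-disjointness of $\overline{U_l}$ is what forces $\mu^{(l)}_{g_1}\mu^{(l)}_{g_2}=0$ for $g_1\ne g_2$. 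Note this also corrects your attribution: the enlarged disjointness is used for the orthogonality condition (b), not for the equivariance condition (c).
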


\begin{proof}
Let $\varepsilon>0$ and $M\Subset G$ be a finite subset. Since $\{F_k\}$ is F$\o$lner, we can choose $n\in\N$ such that $F_n$ is $(M,\varepsilon)$-invariant.

By Corollary \ref{am}, we may take $N\in (n,\infty)\cap\N$ such that
\[F_n^2\subset F_N\]
and a Rokhlin cover with shapes $F_n$
\[\mathcal{R}=\{gU_i: i=0,1,\cdots, L(d+1)-1, g\in F_n\}\]
such that $\bigcup_{0\le i\le D}F_nU_i=X$ and every $\overline{U_i}$ is $(F_N,1)$-disjoint. We find a partition of unity subordinate to the open cover $\mathcal{R}$
\[\{\psi_{i,g}: i=0,1,\cdots, L(d+1)-1, g\in F_n\}\]
such that the support of $\psi_{i,g}$ is contained in $gU_i$ for $g\in F_n$ and $\sum_{i,g}\psi_{i,g}=1$. We may also set $\psi_{i,g}=0$ whenever $g\notin F_n$ and $i=0,1,\cdots, L(d+1)-1$.

For every $i=0,1,\cdots, L(d+1)-1$, define $\mu^{(i)}: G\to C(X)_{1,+}$ by
\[\mu^{(i)}_g=\frac{1}{|F_n|}\sum_{h\in F_n}\psi_{i,h^{-1}g}\circ h^{-1}.\]
It is immediate from the definition that each $\mu_g^{(i)}$ is finitely supported and contained in $C(X)_{1,+}$. We now verify that

(a) $\sum_{0\le i\le L(d+1)-1}\sum_{g\in G}\mu_g^{(i)}=1$;

(b) $\mu_{g_1}^{(i)}\mu_{g_2}^{(i)}=0$ whenever $i=0,1,\cdots,L(d+1)-1$ and $g_1\ne g_2$ in $G$;

(c) $\mu_h^{(i)}\circ g^{-1}=_{\varepsilon}\mu_{gh}^{(i)}$.

For (a), we have 
\begin{align*}
\sum_{0\le i\le L(d+1)-1}\sum_{g\in G}\mu_g^{(i)}&=\frac{1}{|F_n|}\sum_{0\le i\le L(d+1)-1}\sum_{g\in G}\sum_{h\in F_n}\psi_{i,h^{-1}g}\circ h^{-1}\\
&=\frac{1}{|F_n|}\sum_{h\in F_n}\left(\sum_{0\le i\le L(d+1)-1}\sum_{g\in G}\psi_{i,h^{-1}g}\right)\circ h^{-1}\\
&=\frac{1}{|F_n|}\sum_{h\in F_n}{\bf 1}_X\circ h^{-1}={\bf 1}_X,
\end{align*}
and hence (a) holds.

For (b), for every $i=0,1,\cdots, L(d+1)-1$ and $g_1\ne g_2\in G$, one sees
\begin{align*}
\mu_{g_1}^{(i)}\mu_{g_2}^{(i)}&=\frac{1}{|F_n|^2}\left(\sum_{h\in F_n}\psi_{i,h^{-1}g_1}\circ h^{-1}\right)\left(\sum_{h\in F_n}\psi_{i,h^{-1}g_2}\circ h^{-1}\right)\\
&=\frac{1}{|F_n|^2}\sum_{h_1,h_2\in F_n}(\psi_{i, h_1^{-1}g_1}\circ h_1^{-1})\cdot (\psi_{i, h_2^{-1}g_2}\circ h_2^{-1}).
\end{align*}
We now prove that for every $h_1, h_2\in F_n$ and $g_1\ne g_2$ in $G$, 
\[(\psi_{i, h_1^{-1}g_1}\circ h_1^{-1})\cdot (\psi_{i, h_2^{-1}g_2}\circ h_2^{-1})=0.\]
In fact, let us assume by contradiction that the product is nonzero. Then there is $x\in X$ such that $\psi_{i, h_1^{-1}g_1}(h_1^{-1}(x))\ne 0$. Since the support of $\psi_{i, h_1^{-1}g_1}$ is contained in $h_1^{-1}g_1U_i$ where $h_1^{-1}g_1\in F_n$, we know that $x\in g_1U_i$ where $g_1\in h_1F_n\subset F_n^2$. Similarly, $x\in g_2U_i$ for $g_2\in h_2F_n\subset F_n^2$. Therefore, $g_1,g_2\in F_n^2$ with
\[g_1U_i\cap g_2U_i\ne\varnothing.\]
But note that by our choice of $N$ and the Rokhlin cover $\mathcal{R}$, $F_n^2\subset F_N$ and the bases $U_i$'s are all $(F_N,1)$-disjoint. This contradicts to $g_1\ne g_2$, which comes (b).

Finally, for (c), this is just a straightforward calculation similar to Theorem 7.4 in \cite{SWZ}:
\begin{align*}
\|\mu_{g'}^{(i)}\circ g^{-1}-\mu^{(i)}_{gg'}\|&=\frac{1}{|F_n|}\left\|\sum_{h\in F_n}\psi_{i,h^{-1}g'}\circ h^{-1}\circ g^{-1}-\sum_{h\in F_n}\psi_{i, h^{-1}gg'}\circ h^{-1}\right\|\\
&=\frac{1}{|F_n|}\left\|\sum_{h\in F_n}\psi_{i,(gh)^{-1}gg'}\circ (gh)^{-1}-\sum_{h\in F_n}\psi_{i, h^{-1}gg'}\circ h^{-1}\right\|\\
&=\frac{1}{|F_n|}\left\|\sum_{h\in gF_n}\psi_{i,h^{-1}gg'}\circ h^{-1}-\sum_{h\in F_n}\psi_{i, h^{-1}gg'}\circ h^{-1}\right\|\\
&=\frac{1}{|F_n|}\left\|\sum_{h\in gF_n\setminus F_n}\psi_{i,h^{-1}gg'}\circ h^{-1}-\sum_{h\in F_n\setminus gF_n}\psi_{i, h^{-1}gg'}\circ h^{-1}\right\|\\
&\le \frac{1}{|F_n|}\left(\sum_{h\in gF_n\setminus F_n}\|\psi_{i,h^{-1}gg'}\circ h^{-1}\|+\sum_{h\in F_n\setminus gF_n}\|\psi_{i, h^{-1}gg'}\circ h^{-1}\|\right)\\
&\le \frac{|F_n\bigtriangleup  gF_n|}{|F_n|}\le\varepsilon({\rm since}\ g\in M\ {\rm and}\ F_n\ {\rm is}\ (M,\varepsilon)-{\rm invariant})\ .
\end{align*}
This completes the proof.
\end{proof}

\subsection{Finite nuclear dimension of $C(X)\rtimes_\af G$ with $G$ satisfying (WAFC)}

Now from Sect. 8 of \cite{GWY}, Theorem 5.14 in \cite{K}, Lemma 8.4 in \cite{SWZ} and Theorem \ref{fad} above, we get the following corollary, regarding other dimensions of $(X, G)$ and its $C^*$-algebra.
\begin{cor}\label{nucdim}
Let $(X, G)$ be a topological dynamical system, where $X$ is a compact metric space and $G$ a countably infinite group with $|Z(F)|=\infty$ for every finite subset $F\subset G$. Let $\{F_n\}\subset G$ be a  symmetric F$\o$lner sequence such that $(G, \{F_n\})$ satisfies (WAFC) with respect to $L_G$.  Denote the induced action $G\curvearrowright C(X)$ by $\af$.

If the system $(X,G)$ is free and ${\rm dim}(X)=d<\infty$, then the dynamic asymptotic dimension ${\rm dad}(X,G)$, tower dimension ${\rm dim}_{\rm tow}$ and fine tower dimension ${\rm dim}_{\rm ftow}$ of $(X, G)$ are all finite. In particular, we have 
\[{\rm dad}^{+1}(X,G)\le L_G(d+1),\ \ {\rm dim}^{+1}_{\rm tow}(X,G)\le{\rm dim}^{+1}_{\rm ftow}(X,G)\le L_G(d+1)^2.\]
Consequently, the crossed product $C(X)\rtimes_{\af}G$ has finite nuclear dimension given by
\[{\rm dim}^{+1}_{\rm nuc}(C(X)\rtimes_{\af}G)\le L_G(d+1)^2.\]
If in addition, the action is minimal, then $C(X)\rtimes_\af G$ is $\mathcal{Z}$-stable, and also classifiable.
\end{cor}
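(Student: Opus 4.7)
The plan is to assemble this corollary by feeding the quantitative output of Theorem \ref{fad} into the chain of standard dimension-theoretic results cited in the preceding remark. There is no new dynamical content to extract beyond what Theorem \ref{fad} already supplies, namely the estimate ${\rm dim}_{\rm am}^{+1}(\af) \le L_G(d+1)$; everything else is a sequence of black-box applications.

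Concretely, I would proceed in three short reductions. First, I invoke Theorem 4.11 of \cite{GWY}, which bounds ${\rm dad}(X,G) \le {\rm dim}_{\rm am}(X,G)$ for free actions of countable discrete groups; combined with Theorem \ref{fad} this yields ${\rm dad}^{+1}(X,G) \le L_G(d+1)$. Second, I apply Theorem 5.14 of \cite{K}, which gives ${\rm dim}_{\rm ftow}^{+1}(X,G) \le {\rm dad}^{+1}(X,G) \cdot ({\rm dim}(X)+1)$, producing the bound $L_G(d+1)^2$ for the fine tower dimension; the inequality ${\rm dim}_{\rm tow}^{+1} \le {\rm dim}_{\rm ftow}^{+1}$ is automatic from the definitions. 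Third, I use Theorem 8.6 of \cite{GWY}, which bounds the nuclear dimension of the crossed product by dynamic asymptotic dimension times covering dimension, giving ${\rm dim}_{\rm nuc}^{+1}(C(X)\rtimes_\af G) \le L_G(d+1)^2$. Lemma 8.4 of \cite{SWZ} is used implicitly to equate the amenability dimension at the dynamical level with that of the induced $C^*$-algebraic action, so that the input from Theorem \ref{fad} is available on both sides.

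For the final classification claim in the minimal case, I would invoke Corollary 9.5 of \cite{KS}: finite nuclear dimension together with minimality and the small boundary property (which is implied by $(TSBP \le d)$, and which holds automatically for free actions on finite-dimensional spaces by Theorem 3.8 of \cite{Sza}) forces $\mathcal{Z}$-stability and classifiability by the Elliott invariant. Since the argument is a pure book-keeping exercise, I anticipate no substantive obstacle; the only verification required is that each cited theorem applies in the generality of countable amenable group actions, which it does, since the (WAFC) hypothesis enters only through the amenability dimension bound of Theorem \ref{fad} and is invisible to the downstream reductions.
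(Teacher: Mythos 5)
Your proposal is correct and follows essentially the same route as the paper, which likewise derives the corollary by feeding the amenability dimension bound of Theorem \ref{fad} into the cited results of \cite{GWY}, Theorem 5.14 of \cite{K}, Lemma 8.4 of \cite{SWZ}, and Corollary 9.5 of \cite{KS} for the minimal case; the paper in fact gives no further argument beyond this chain of citations. Your write-up merely makes the book-keeping explicit.
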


\section{An application of Rokhlin dimension to the embedding problem}\label{sec:6}
\subsection{An embedding result}
Let $(X,G)$ be a topological dynamical system, where $X$ is a topological space and $G$ is a countable amenable group.

Let $\mathcal{U}=(U_i)_{i\in I}$ be a finite open cover of $X$, and $A\Subset G$ a finite subset. Define the finite open cover $\mathcal{U}_A$ by
\[\mathcal{U}_A=\bigvee_{g\in A}g^{-1}(\mathcal{U}).\]
Let $\{F_n\}$ be a F$\o$lner sequence of $G$. Define
\[D(\mathcal{U}, X, G)=\lim_{n\to\infty}\frac{D(\mathcal{U}_{F_n})}{|F_n|}.\]

\begin{rem}
It follows from Proposition 10.2.1 and Theorem 9.4.1 in \cite{MC} that the limit $\lim_{n\to\infty}D(\mathcal{U}_{F_n})/|F_n|$ exists, is finite, and does not depend on the choice of the F$\o$lner sequence $\{F_n\}$.
\end{rem}

\begin{df}\label{md}
The {\it mean (topological) dimension} of the dynamical system $(X,G)$ is the quantity ${\rm mdim}(X,G)\in[0,\infty]$ defined by
\[{\rm mdim}(X,G)=\sup_{\mathcal{U}}D(\mathcal{U}, X, G),\]
where $\mathcal{U}$ runs over all finite open covers of $X$.
\end{df}

In fact, what we need is the following equivalent definition of mean dimension for topological dynamical systems over compact metric spaces. That this is indeed well-defined and equivalent to Definition \ref{md} follows from Theorem 10.4.2 in \cite{MC}. See also Sect. 3 in \cite{GQS} for reference.
\begin{df}[A metric approach to mean dimension]
Let $(X,d)$ be a compact metric space, $G$ a countable amenable group and $(X,G)$ a dynamical system.

For every nonempty finite subset $A\Subset G$, define a metric $d_A$ on $X$ by
\[d_A(x,y)=\max_{g\in A}d(gx, gy)\ \ {\rm for\ all\ }x,y\in X.\]
For every $\varepsilon>0$, define ${\rm widim}_\varepsilon(X, d_A)$ to be the smallest integer $n\ge0$ such that there exists a finite open cover of $X$ with order $n$ and mesh no larger than $\varepsilon$, with respect to the metric $d_A$.

Let $\{F_n\}$ be a F$\o$lner sequence. Then
\[{\rm mdim}(X,G)=\sup_{\varepsilon>0}\lim_{n\to\infty}\frac{{\rm widim}_\varepsilon(X, d_{F_n})}{|F_n|}.\]
\end{df}

We now state our main result.
\begin{thm}\label{ep}
Let $(X, G)$ be a topological dynamical system, where $X$ is a compact metric space and $G$ a countably infinite group with $|Z(F)|=\infty$ for every finite subset $F\subset G$. Let $\{F_n\}\subset G$ be a  symmetric F$\o$lner sequence such that $(G, \{F_n\})$ satisfies (WAFC) with respect to $L_G$.

Let $m\in\N$, $d\in\N\cup\{0\}$ and $(Y, G)$ a factor of $X$. If $(Y, G)$ is free with ${\rm dim}(Y)=d$ and ${\rm mdim}(X, G)<m/2$, then
\[{\rm edim}(X,G)\le(L_G(d+1)+1)m+1\stackrel{\triangle}{=}Q,\]
or in other words, there is an embedding from $(X, G)$ into $(([0,1]^Q)^G, \sigma)$. 

In particular, this applies to $G=\mathbb{H}_{2n+1}(\Z)\times G_1$, where $\mathbb{H}_{2n+1}(\Z)$ is the discrete Heisenberg group, and $G_1$ is any of a finite group, a finitely generated abelian group, a locally finite group or a countable group satisfying (WAFC) with a symmetric F$\o$lner sequence.
\end{thm}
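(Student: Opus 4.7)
The plan is to adapt the Gutman--Qiao--Szab\'o strategy (Corollary 3.1 of \cite{GQS}) to the more general setting here, with Corollary \ref{am} playing the role of the finite Rokhlin dimension input. First, I would fix a F\o lner set $F_n$ large enough to be $(E,\varepsilon)$-invariant for a prescribed finite set $E \Subset G$ and $\varepsilon > 0$, and apply Corollary \ref{am} to $(Y,G)$ to obtain a Rokhlin cover $\{gU_i : g \in F_n,\, i = 0,1,\ldots,D\}$ of $Y$, where $D = L_G(d+1)-1$, whose closed bases $\overline{U_i}$ are $(F_N,1)$-disjoint for some $N$ with $F_n^2 \subset F_N$. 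Pulling these back through the factor map $\pi : X \to Y$ produces a Rokhlin cover of $X$ with the same combinatorial structure; this is the step in which the freeness and finite dimensionality of the factor $(Y,G)$ are consumed.

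Next, the hypothesis ${\rm mdim}(X,G) < m/2$ supplies, via a standard Lindenstrauss--Weiss/Gutman--Tsukamoto type construction, a continuous map $f : X \to [0,1]^m$ whose $F_n$-trajectory $(f(hx))_{h \in F_n}$ behaves as an $\varepsilon$-embedding when restricted to each tower level. Together with a partition of unity $\{\psi_{i,g}\}$ subordinate to the open Rokhlin cover, I would define a continuous equivariant map $\Phi : X \to ([0,1]^Q)^G$ coordinate-wise by packaging, at each $g \in G$, the values of $\psi_{i,h}$ and $f$ on a controlled window around $g$, together with one extra scalar coordinate for orbit separation; this accounts for the terminal $+1$ in $Q = (L_G(d+1)+1)m+1$, while the $+1$ inside the parenthesis reflects an extra block reserved for encoding the tower-index information so that it survives the shift action.

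For injectivity, I would argue that from the coordinate at $g = e$ one reads off all the weights $\psi_{i,h}(x)$, which by the $(F_N,1)$-disjointness guaranteed by Corollary \ref{am} pinpoint the tower index of $x$; the remaining coordinates then determine the finite string $(f(hx))_{h \in F_n}$ up to $\varepsilon$. Passing to a diagonal sequence as $n \to \infty$ and $\varepsilon \to 0$, and invoking compactness of $X$ together with the fact that the marker property is stable under extensions, forces the limit map to be an injection, hence an embedding.

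The hard part will be executing the \emph{equivariant} $\varepsilon$-embedding step in the absence of commutativity: in \cite{GQS} the hypothesis $G = \Z^k$ lets one translate freely and index orbits by lattice coordinates, whereas here one must be meticulous with left and right translates, exactly in the spirit of Lemma \ref{4.3}. Corollary \ref{am} was tailored precisely to provide bases remaining $(F_N,1)$-disjoint for an enlarged shape with $F_N \supset F_n^2$, so that applying the shift by $g$ in a bounded window does not collapse levels across towers; this is the essential new ingredient beyond the $\Z^k$ proof. Once this machinery is in place, the final statement for $G = \mathbb{H}_{2n+1}(\Z) \times G_1$ is immediate from Corollary \ref{hg} and the direct-product stability of (WAFC) given by Corollary \ref{dp}.
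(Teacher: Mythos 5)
Your overall strategy is the right one (it is the Gutman--Qiao--Szab\'o scheme the paper also follows: Rokhlin cover of the free finite-dimensional factor, ${\rm widim}_\varepsilon$ control from ${\rm mdim}(X,G)<m/2$, and $\varepsilon$-embeddings on each tower), but two steps as you describe them would fail. First, your injectivity argument ``passing to a diagonal sequence as $n\to\infty$ and $\varepsilon\to0$ \dots forces the limit map to be an injection'' is not valid: a limit of $\varepsilon_n$-embeddings with $\varepsilon_n\to0$ need not be injective, and compactness of $X$ does not rescue this. The paper instead runs a Baire category argument in $C(X,[0,1]^{(D+1)m})$: for each $\eta>0$ the set $A_\eta$ of $f$ for which $I_f\times\pi$ is an $\eta$-embedding is shown to be \emph{open} (Lemma \ref{eb}) and \emph{dense} (via Lemma \ref{3.1} and the extension Lemma \ref{A5} applied tower by tower), so $\bigcap_n A_{1/n}$ is a nonempty dense $G_\delta$ and any element of it is an honest embedding. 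Some version of this openness-plus-density argument is unavoidable; you cannot simply take a limit.

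Second, you have dropped the factor map from the embedding, and with it the mechanism that separates points lying in different fibers of $\pi$. In the paper the candidate map is $I_f\times\pi:X\to([0,1]^{(D+1)m})^G\times Y$; the equality $\pi(x)=\pi(y)$ is exactly what forces $x$ and $y$ into a common tower level $V_i^h=\pi^{-1}(hU_i)$, after which the $\varepsilon$-embedding $G_i$ (in the metric $d_{F_n}$) separates them. Your ``one extra scalar coordinate for orbit separation'' does not obviously perform this task, and your accounting of the constants is off: the terminal $+1$ in $Q$ comes from a Jaworski-type embedding of the free finite-dimensional system $(Y,G)$ into $([0,1]^G,\sigma)$, not from an ad hoc extra coordinate, while the ``$+1$ inside the parenthesis'' is merely slack in the estimate $(D+1)m+1\le L_G(d+1)m+1\le(L_G(d+1)+1)m+1$, not a reserved block for tower indices. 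Relatedly, the partition of unity $\{\psi_{i,h}\}$ and the strengthened $(F_N,1)$-disjointness of Corollary \ref{am} belong to the amenability-dimension argument of Theorem \ref{fad}; the embedding proof needs only the ordinary Rokhlin cover from Corollary \ref{frd}, and the non-commutative subtlety you flag is resolved there by the identity $wh^{-1}\pi^{-1}(hU_i)=\pi^{-1}(wU_i)$ for $w\in F_n$, which is a consequence of $\pi$ being a factor map rather than of the enlarged disjointness.
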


\subsection{The proof of Theorem \ref{ep}}
Recall that for a topological dynamical system $(X,G)$ and any continuous map $f: X\to [0,1]^m$, the induced map $I_f: X\to ([0,1]^m)^G$ is defined by
\[(I_f(x))_g=f(gx).\]
It is immediate that $I_f$ is a continuous map and is equivariant in the sense that, for any $g\in G$ and $x\in X$, $I_f(gx)=\sigma_g(I_f(x))$.

\begin{lem}[Lemma 2.1, \cite{GT1}]\label{3.1}
Let $(X, d)$ be a compact metric space and $f: X\to [0,1]^m$ a continuous map. Suppose that there are $\delta,\varepsilon>0$ satisfying the implication
\[d(x,y)<\varepsilon\Longrightarrow \|f(x)-f(y)\|_\infty<\delta.\]
if ${\rm widim}_\varepsilon(X,d)<m/2$, then there exists an $\varepsilon$-embedding $f': X\to[0,1]^m$ such that
\[\sup_{x\in X}\|f(x)-f'(x)\|_\infty<\delta.\]
\end{lem}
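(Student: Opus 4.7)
The plan is to realize $f'$ as a convex combination of perturbed ``vertex values'' using a partition of unity subordinate to a short-scale open cover of $X$, and then to force the fibers of $f'$ to be trapped inside single mesh cells via a general-position argument in $[0,1]^m$. The strict inequality in the hypothesis on $\|f(x)-f(y)\|_\infty$ and the strict inequality ${\rm widim}_\varepsilon(X,d)<m/2$ together provide all the slack that the construction needs.

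First, since ${\rm widim}_\varepsilon(X,d)=:D<m/2$, choose a finite open cover $\mathcal{U}=\{U_1,\dots,U_N\}$ of $X$ of order $\le D$ and mesh $\le\varepsilon$. Using compactness of $X$, uniform continuity of $f$, and the strict hypothesis $d(x,y)<\varepsilon\Rightarrow\|f(x)-f(y)\|_\infty<\delta$, the cover can be arranged so that
\[
\eta:=\delta-\max_{i}\sup_{x,y\in U_i}\|f(x)-f(y)\|_\infty>0,
\]
and by a slight refinement (via a Lebesgue number for $\mathcal{U}$, noting that the strict integer inequality $D<m/2$ gives room to decrease the mesh without losing the order bound) one may further assume mesh strictly less than $\varepsilon$. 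Pick a continuous partition of unity $\{\phi_i\}_{i=1}^N$ subordinate to $\mathcal{U}$ and base points $x_i\in U_i$. Now choose $y_i\in[0,1]^m$ with $\|y_i-f(x_i)\|_\infty<\eta/2$ in \emph{general position}, meaning that any $m+1$ of the $y_i$'s are affinely independent; this is a dense open condition on $([0,1]^m)^N$ (after a tiny inward translation to avoid the boundary), so it is achievable with an arbitrarily small perturbation.

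Define $f'(x)=\sum_{i}\phi_i(x)\,y_i$, which is continuous and lies in $[0,1]^m$ as a convex combination. For the sup-norm estimate, any $i$ with $\phi_i(x)>0$ has $x\in U_i$, hence
\[
\|y_i-f(x)\|_\infty\le\|y_i-f(x_i)\|_\infty+\|f(x_i)-f(x)\|_\infty<\tfrac{\eta}{2}+(\delta-\eta)=\delta-\tfrac{\eta}{2},
\]
and averaging against $\phi_i(x)$ yields $\|f'(x)-f(x)\|_\infty<\delta-\eta/2<\delta$ uniformly in $x$. For the $\varepsilon$-embedding property, assume $f'(x)=f'(y)$ and set $A=\{i:\phi_i(x)>0\}$, $B=\{i:\phi_i(y)>0\}$. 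The order bound gives $|A|,|B|\le D+1$, hence $|A\cup B|\le 2D+2\le m+1$ because $D<m/2$ is an integer inequality. Writing $c_i=\phi_i(x)-\phi_i(y)$, the identity $f'(x)=f'(y)$ together with $\sum_i\phi_i(x)=\sum_i\phi_i(y)=1$ gives an affine relation $\sum_{i\in A\cup B}c_iy_i=0$ with $\sum c_i=0$ among a family of at most $m+1$ affinely independent points, forcing every $c_i=0$. Thus $A=B$ and both $x,y$ lie in some common $U_i$; since the mesh was arranged to be strictly less than $\varepsilon$, one gets $d(x,y)<\varepsilon$.

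The main obstacle is the book-keeping of strict inequalities: the hypothesis is strict $<\delta$, the conclusion demands strict $<\delta$ for the sup norm and strict $<\varepsilon$ for the fiber diameters, and both the vertex perturbation $y_i\mapsto f(x_i)$ and the ``mesh $<\varepsilon$'' refinement each eat a small amount of the available slack. These are absorbed cleanly by fixing the positive parameter $\eta$ first (guaranteed by the strict hypothesis together with compactness) and then choosing the perturbation scale below $\eta/2$; the integer gap in $D<m/2$ guarantees that the mesh can simultaneously be pushed strictly below $\varepsilon$ without violating the order bound. Everything else is the standard nerve/partition-of-unity/general-position construction.
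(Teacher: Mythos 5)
The paper itself contains no proof of this lemma: it is quoted verbatim from \cite{GT1} (Gutman--Tsukamoto), so the benchmark is their standard argument, and your construction is essentially that argument --- a cover realizing $\mathrm{widim}_\varepsilon$, a subordinate partition of unity, and generically positioned vertex images $y_i$, with $f'=\sum_i\phi_i y_i$ being exactly the map to the nerve followed by a generic affine realization of the nerve in $[0,1]^m$. The core steps are right: the support counting $|A\cup B|\le 2D+2\le m+1$ from the integrality of $D<m/2$, the affine-dependence argument forcing $\phi_i(x)=\phi_i(y)$ for all $i$ (hence the whole fiber lies in a single $U_i$), and the $\eta$-bookkeeping for the sup-norm estimate.

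There is, however, one genuine gap: the claim that you may assume the mesh is \emph{strictly} less than $\varepsilon$ ``via a Lebesgue number,'' with the integer gap $D<m/2$ providing the room. A Lebesgue number does not decrease diameters, and the monotonicity of $\mathrm{widim}$ runs the wrong way: $\mathrm{widim}_{\varepsilon'}$ can jump \emph{up} as $\varepsilon'$ decreases below $\varepsilon$, so no refinement preserves the order bound in general. Concretely, for $X=[0,1]$ and $\varepsilon=1$ the trivial cover $\{X\}$ has order $0$ and mesh $\le 1$, while every cover of mesh $<1$ consists of proper open subsets and hence, by connectedness, has order $\ge 1$. This is not a cosmetic issue, because \emph{both} of your strictness uses hinge on it: you need $\mathrm{diam}(\overline{U_i})<\varepsilon$ to conclude $\eta>0$ (with mesh exactly $\varepsilon$, pairs at distance exactly $\varepsilon$ inside $\overline{U_i}$ are not controlled by the hypothesis, so $\eta>0$ can fail), and you need mesh $<\varepsilon$ for the final fiber bound. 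Worse, under the conventions of the present paper (mesh ``no larger than $\varepsilon$'' in the definition of $\mathrm{widim}_\varepsilon$, but $\mathrm{diam}\,f'^{-1}(y)<\varepsilon$ in the definition of $\varepsilon$-embedding) the statement is literally false at boundary parameters, so no repair of your reduction is possible within them: take $X=[0,1]^2$ with the sup metric, $\varepsilon=1$, $m=1$, $f$ constant, $\delta=1$; then $\mathrm{widim}_1(X,d)=0<1/2$ and the oscillation hypothesis is vacuous, but a map $f':X\to[0,1]$ with all fibers of diameter $<1$ would (by upper semicontinuity of $y\mapsto\mathrm{diam}\,f'^{-1}(y)$ and compactness) be an $\varepsilon''$-embedding for some $\varepsilon''<1$, forcing $\mathrm{widim}_{\varepsilon'''}(X,d)\le 1$ for some $\varepsilon'''<1$ and contradicting the Lindenstrauss--Weiss computation $\mathrm{widim}_{\varepsilon'''}([0,1]^2)=2$. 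The strictness must therefore be built into the definition of $\mathrm{widim}_\varepsilon$ itself, as it effectively is in \cite{GT1}, where $\mathrm{widim}$ is defined through $\varepsilon$-embeddings with the strict fiber-diameter inequality, so the associated covers have mesh $<\varepsilon$. Once you adopt that convention (or replace the refinement claim by simply taking a cover of mesh $<\varepsilon$ as given by the correct definition), the rest of your proof is complete and coincides with Gutman--Tsukamoto's.
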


\begin{lem}[Lemma A.5, \cite{Gut1}]\label{A5}
Let $\varepsilon>0$, $m\in\N$ and $B$ be a closed subset of a compact metric space $X$. Let $f': B\to [0,1]^m$ and $\tilde{f}: X\to [0,1]^m$ be two continuous maps such that $\|f'-\tilde{f}|_B\|_\infty<\varepsilon$. Then there exists a continuous map $f: X\to [0,1]^m$ such that
\[f|_B=f'\ \ {\rm and}\ \ \|f-\tilde{f}\|_\infty<\varepsilon.\]
\end{lem}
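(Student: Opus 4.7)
The plan is to reduce to a Tietze-type extension of the pointwise difference $f'-\tilde{f}|_B$ and then clip the result back into the cube $[0,1]^m$. First, since $B$ is a closed subset of the compact metric space $X$, the set $B$ is itself compact, so the continuous real-valued function $x\mapsto \|f'(x)-\tilde{f}(x)\|_\infty$ attains its maximum on $B$. Hence
\[\varepsilon':=\|f'-\tilde{f}|_B\|_\infty<\varepsilon,\]
which provides the slack needed to absorb the clipping step below.

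Next, I would work coordinate by coordinate. Define $h':=f'-\tilde{f}|_B\colon B\to[-\varepsilon',\varepsilon']^m$. For each $i=1,\dots,m$, the Tietze extension theorem applied to the compact (hence normal) Hausdorff space $X$ with closed subset $B$ yields a continuous extension $h_i\colon X\to[-\varepsilon',\varepsilon']$ of $h'_i$, where the control on the range comes from post-composing with the standard retraction $\mathbb{R}\to[-\varepsilon',\varepsilon']$ if necessary. Set $h=(h_1,\dots,h_m)\colon X\to[-\varepsilon',\varepsilon']^m$.

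The natural candidate $\tilde{f}+h$ already meets both desired conditions except that it may leave $[0,1]^m$, so I define $f\colon X\to[0,1]^m$ by clipping each coordinate:
\[f_i(x):=\max\bigl\{0,\,\min\{1,\,\tilde{f}_i(x)+h_i(x)\}\bigr\}.\]
This is continuous as the composition of continuous maps. On $B$ one has $\tilde{f}_i(x)+h_i(x)=f'_i(x)\in[0,1]$, so the clipping is vacuous there and $f|_B=f'$. For the sup-norm bound, a short case analysis suffices: if no clipping occurs at $x$, then $|f_i(x)-\tilde{f}_i(x)|=|h_i(x)|\le\varepsilon'$; if clipping occurs, say $\tilde{f}_i(x)+h_i(x)>1$, then $f_i(x)=1$ lies strictly between $\tilde{f}_i(x)\in[0,1]$ and $\tilde{f}_i(x)+h_i(x)$, so $|f_i(x)-\tilde{f}_i(x)|<|h_i(x)|\le\varepsilon'$ (and symmetrically in the lower case). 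Therefore $\|f-\tilde{f}\|_\infty\le\varepsilon'<\varepsilon$.

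The only subtlety, and the reason the lemma is stated with the strict inequality on $\|f'-\tilde{f}|_B\|_\infty$, is precisely the need to have room to clip back into the cube without destroying the exact agreement $f|_B=f'$; compactness of $B$ upgrades the given strict bound to a uniform strict bound $\varepsilon'<\varepsilon$, and the fact that $f'$ already takes values in $[0,1]^m$ ensures that the clipping is trivial on $B$, which is what makes the whole argument go through cleanly.
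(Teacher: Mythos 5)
Your proof is correct: extending the difference $f'-\tilde{f}|_B$ coordinatewise by Tietze with the range controlled by $\varepsilon'=\|f'-\tilde{f}|_B\|_\infty<\varepsilon$ (the strict uniform bound coming from compactness of $B$) and then clipping $\tilde{f}+h$ back into $[0,1]^m$ gives exactly the stated conclusion, and your case analysis showing the clipping only shrinks $|f_i-\tilde{f}_i|$ is sound. The paper does not prove this lemma itself but cites it as Lemma A.5 of \cite{Gut1}, and your argument is essentially the standard one used there, so there is nothing to flag.
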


\begin{lem}\label{eb}
Let $(X, d_X)$ and $(Y, d_Y)$ be compact metric spaces. Let $\varepsilon>0$ be a number. Then for every $\varepsilon$-embedding continuous map $f: X\to Y$, there is $\delta>0$ such that, whenever $g: X\to Y$ is a continuous map with $d(f,g)<\delta$, $g$ is also an $\varepsilon$-embedding, where $d(f,g)=\sup_{x\in X}d_Y(f(x), g(x))$.
\end{lem}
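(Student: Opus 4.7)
The plan is to argue by contradiction using the compactness of $X$ and $Y$. Suppose the conclusion fails for some $\varepsilon$-embedding $f$. Then for every $n \in \N$ we can find a continuous map $g_n : X \to Y$ with $d(f, g_n) < 1/n$ but such that $g_n$ is not an $\varepsilon$-embedding. By definition this produces a point $y_n \in Y$ with $\mathrm{diam}(g_n^{-1}(y_n)) \geq \varepsilon$. Since $g_n$ is continuous and $X$ is compact, the preimage $g_n^{-1}(y_n)$ is compact (hence closed and bounded), so its diameter is attained: there exist $x_n, x_n' \in g_n^{-1}(y_n)$ with $d_X(x_n, x_n') \geq \varepsilon$.

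Next I would extract convergent subsequences. By compactness of $X$ and $Y$, after passing to a common subsequence we may assume $x_n \to x$, $x_n' \to x'$ in $X$ and $y_n \to y$ in $Y$. Continuity of the distance yields $d_X(x, x') \geq \varepsilon$, so in particular $x \neq x'$. From $d_Y(f(x_n), g_n(x_n)) \leq d(f, g_n) < 1/n$ and $g_n(x_n) = y_n \to y$, we get $f(x_n) \to y$; by continuity of $f$, also $f(x_n) \to f(x)$, so $f(x) = y$. The same argument applied to $x_n'$ gives $f(x') = y$.

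Thus $x, x' \in f^{-1}(y)$ with $d_X(x, x') \geq \varepsilon$, so $\mathrm{diam}(f^{-1}(y)) \geq \varepsilon$, which contradicts the assumption that $f$ is an $\varepsilon$-embedding (fibers have diameter strictly less than $\varepsilon$).

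There is no real obstacle here; the only subtle point is the mismatch between the strict inequality in the definition of $\varepsilon$-embedding and the weak inequality $\geq \varepsilon$ produced at each step by the failure of $g_n$ to be one, but these combine cleanly in the contradiction (weak $\geq \varepsilon$ on one side vs.\ strict $< \varepsilon$ on the other). One should also note that the compact fiber guarantees the diameter is achieved, so the construction of $x_n, x_n'$ is legitimate; if one preferred to sidestep this, it would suffice to choose $x_n, x_n'$ with $d_X(x_n, x_n') \geq \varepsilon - 1/n$ and pass to the limit, obtaining $d_X(x,x') \geq \varepsilon$ just the same.
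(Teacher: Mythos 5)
Your proof is correct and rests on essentially the same compactness-and-subsequence argument as the paper's: the paper extracts exactly the same sequences $x_n, x_n'$ with $d_X(x_n,x_n')\ge\varepsilon$ to first establish a uniform $\delta'$ with ${\rm diam}(f^{-1}(B_{\delta'}(y)))<\varepsilon$ for all $y\in Y$, and then handles $g$ by the triangle inequality with $\delta=\delta'/2$, whereas you fold both steps into a single contradiction along a sequence $g_n$. Your handling of the strict versus weak inequalities and of the attainment of the fiber diameter is fine, so no gap remains.
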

\begin{proof}
First we show that, there is $\delta'>0$ such that for every $y\in Y$, 
\[{\rm diam}(f^{-1}(B_{\delta'}(y)))<\varepsilon.\]
If not, then we can choose sequences $y_n\in Y, x_n, x_n'\in X$ with
\[f(x_n), f(x_n')\in B_{1/n}(y_n)\ \ {\rm and}\ \ d_X(x_n, x_n')\ge\varepsilon.\]
By the compactness of $X$ and $Y$, and also passing to a subsequence, we may assume $x_n\to x, x_n'\to x'$ and $y_n\to y$ as $n\to\infty$. Then $f(x)=f(x')=y$ and $d_X(x,x')\ge\varepsilon$. This contradicts to the assumption that $f$ is an $\varepsilon$-embedding.

Now let $\delta=\delta'/2$ and $g: X\to Y$ be such that $d(f,g)<\delta$. We claim that $g$ is an $\varepsilon$-embedding. In fact, let $x,x'\in X$ so that $g(x)=g(x')$. By the triangle inequality, we have
\begin{align*}
d_Y(f(x), f(x'))&\le d_Y(f(x), g(x))+d_Y(g(x), g(x'))+d_Y(g(x'), f(x'))\\
&<\delta'/2+\delta'/2=\delta',
\end{align*}
and hence $d_X(x,x')<\varepsilon$ by the choice of $\delta'$. This completes the proof.
\end{proof}

We now prove the following Theorem in a similar fashion to Theorem 3.1 in \cite{GQS}.
\begin{thm}
Let $(X, G)$ be a topological dynamical system where $X$ is a compact metric space, and $G$ a countable group acting on $X$.

Let $(Y, G)$ be a factor of $X$ with the factor map $\pi: X\to Y$. If there are $D\in\N\cup\{0\}$ and $L\in\N$ with ${\rm dim}_{\rm Rok}(Y, G)=D$ and ${\rm mdim}(X,G)<L/2$, then the set of maps
\[A=\{f\in C(X, [0,1]^{(D+1)L}): I_f\times\pi: X\to ([0,1]^{(D+1)L})^G\times Y\ {\rm is\ an\ embedding}\}\]
is a dense $G_\delta$ subset of $C(X, [0,1]^{(D+1)L})$. In particular, $A\ne\varnothing$.
\end{thm}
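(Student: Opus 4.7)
The plan is a Baire category argument. For each $k\ge 1$, set
\[A_k=\{f\in C(X,[0,1]^{(D+1)L}):I_f\times\pi\text{ is a }(1/k)\text{-embedding}\}.\]
Since $I_f\times\pi$ is continuous from a compact space into a Hausdorff space, it is an embedding exactly when injective, which in turn is equivalent to being a $(1/k)$-embedding for every $k\ge 1$; hence $A=\bigcap_{k\ge 1}A_k$. Equipping $C(X,[0,1]^{(D+1)L})$ with the uniform norm yields a complete metric space, so by the Baire category theorem it suffices to show each $A_k$ is open and dense.

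Openness of $A_k$ follows from Lemma \ref{eb}. Fix any compatible metric on $([0,1]^{(D+1)L})^G$ of the form $\rho(u,v)=\sum_{n\ge 1}2^{-n}\|u_{g_n}-v_{g_n}\|_\infty$ for some enumeration $\{g_n\}$ of $G$. Then $\sup_{x\in X}\rho(I_f(x),I_{f'}(x))\le\|f-f'\|_\infty$, so any $f'$ uniformly close to $f\in A_k$ produces $I_{f'}\times\pi$ uniformly close to $I_f\times\pi$, which by Lemma \ref{eb} remains a $(1/k)$-embedding.

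Density of $A_k$ is the substantive step. Given $f_0$ and $\varepsilon>0$, write $f_0=(f_0^{(0)},\dots,f_0^{(D)})$ as $D+1$ blocks of $L$ coordinates. Choose $\eta<\min(\varepsilon,1/k)$ small enough that the hypothesis of Lemma \ref{3.1} applies to each $f_0^{(i)}$ with target tolerance $\varepsilon/2$. Let $\{F_n\}$ be a F$\o$lner sequence witnessing ${\rm dim}_{\rm Rok}(Y,G)=D$; enlarge $n$ so that simultaneously (a) there exist $V_0,\dots,V_D\subset Y$ open with $\{g\overline{V_i}:g\in F_n\}$ pairwise disjoint for each $i$ and $Y=\bigcup_{i=0}^D F_nV_i$, and (b) ${\rm widim}_\eta(X,d_{F_n})<L|F_n|/2$, available via ${\rm mdim}(X,G)<L/2$. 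Let $U_i=\pi^{-1}(V_i)$; by equivariance the towers $\{g\overline{U_i}:g\in F_n\}$ are disjoint and $X=\bigcup_{i=0}^D F_nU_i$. For each $i$, Lemma \ref{3.1} applied to $\Psi_i^0:\overline{U_i}\to[0,1]^{L|F_n|}$, $\Psi_i^0(x)=(f_0^{(i)}(gx))_{g\in F_n}$, with the metric $d_{F_n}$ on the source, yields an $\eta$-embedding $\Psi_i=(\psi_{i,g})_{g\in F_n}:\overline{U_i}\to[0,1]^{L|F_n|}$ satisfying $\|\Psi_i-\Psi_i^0\|_\infty<\varepsilon/2$. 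Use the disjointness of levels to define $\tilde f^{(i)}(gx)=\psi_{i,g}(x)$ on the closed set $\bigsqcup_{g\in F_n}g\overline{U_i}$; on this set $\|\tilde f^{(i)}-f_0^{(i)}\|_\infty<\varepsilon/2$, so Lemma \ref{A5} extends it to $f^{(i)}\in C(X,[0,1]^L)$ with $\|f^{(i)}-f_0^{(i)}\|_\infty<\varepsilon$. Setting $f=(f^{(0)},\dots,f^{(D)})$ gives $\|f-f_0\|_\infty<\varepsilon$, and I check $f\in A_k$ as follows: if $(I_f\times\pi)(x)=(I_f\times\pi)(x')$, then $\pi(x)=\pi(x')\in gV_i$ for some $g\in F_n$ and $i$, so $g^{-1}x,g^{-1}x'\in\overline{U_i}$; the identity $f(hg^{-1}x)=f(hg^{-1}x')$ for every $h\in F_n$ then gives $\Psi_i(g^{-1}x)=\Psi_i(g^{-1}x')$, whence the $\eta$-embedding property delivers $d(x,x')\le d_{F_n}(g^{-1}x,g^{-1}x')<\eta<1/k$, since $g\in F_n$.

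The main obstacle is the density step, whose success hinges on a careful interlocking of three ingredients: the Rokhlin decomposition of $Y$ (which, after lifting by $\pi$, supplies $D+1$ disjoint towers in $X$ of arbitrarily invariant shape), the $\mathrm{widim}$ estimate coming from ${\rm mdim}(X,G)<L/2$ (which dictates the target dimension $L$ per tower), and the extension Lemma \ref{A5} (which globalizes each tower-level construction while maintaining uniform closeness to $f_0$). The conceptual point that makes the per-tower $\eta$-embeddings assemble into a global $\eta$-embedding of $I_f\times\pi$ is that the factor map $\pi$ already separates different $\pi$-fibers, so any putative coincidence $(I_f\times\pi)(x)=(I_f\times\pi)(x')$ is automatically confined to a single tower $gU_i$, where the $\eta$-embedding $\Psi_i$ then rules it out at scale $\eta$.
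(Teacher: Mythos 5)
Your proof is correct and follows essentially the same route as the paper's: the same Baire-category reduction to openness (via Lemma \ref{eb}) and density of the sets of $\eta$-embeddings, the same lifting of the Rokhlin towers of $Y$ through $\pi$, the same use of Lemma \ref{3.1} and Lemma \ref{A5} to perturb $f_0$ block by block over each tower, and the same final computation confining a coincidence of $I_f\times\pi$ to a single tower level before invoking the $\eta$-embedding property with respect to $d_{F_n}$. The only cosmetic difference is that you apply Lemma \ref{3.1} to the restriction of $x\mapsto (f_0^{(i)}(hx))_{h\in F_n}$ to the tower base $\overline{U_i}$ (which is legitimate by monotonicity of $\mathrm{widim}_\eta$ under passing to subsets), whereas the paper applies it on all of $X$ and then restricts via the coordinate projections $p_h$.
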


\begin{proof}
For every positive number $\eta>0$, denote
\[A_\eta=\{f\in C(X, [0,1]^{(D+1)L}): I_f\times\pi\ {\rm is\ an\ }\eta-{\rm embedding}\}.\]
It is clear that $A=\bigcap_{n\in\N} A_{1/n}$. Since $C(X, [0,1]^{(D+1)L})$ is a complete metric space with respect to the metric $\|\cdot\|_\infty$, by the Baire's category theorem, it suffices to show that for every $n\in\N$,
\[A_\eta\ {\rm is\ open\ and\ dense\ in}\ C(X, [0,1]^{(D+1)L}).\]
Since every $I_f\times\pi$ is an $\eta$-embedding and $X$ and $([0,1]^{(D+1)L})^G\times Y$ are compact, $A_\eta$ is open by Lemma \ref{eb}. Now it suffices to show that $A_\eta$ is dense.

Now we fix a $\delta>0$. Take $f\in C(X, [0,1]^{(D+1)L})$ arbitrarily. We show that there is a map $g\in A_\eta$ with $\|f(x)-g(x)\|_\infty<\delta$ for all $x\in X$.

We identify $f: X\to [0,1]^{(D+1)L}$ with $f_0\times f_1\times\cdots\times f_D$ where $f_0, f_1,\cdots, f_D: X\to [0,1]^L$ are continuous maps. Since $f_i$ are uniformly continuous, we can choose $\varepsilon\in(0,\eta)$ so that
\begin{align}\label{e1}
d(x,y)<\varepsilon\Longrightarrow \|f_i(x)-f_i(y)\|_\infty<\delta
\end{align}
for all $x,y\in X$ and $i\in\{0,1,\cdots,D\}$. Since ${\rm mdim}(X,G)<L/2$, there is $n\in\N$ with
\[{\rm widim}_\varepsilon(X, d_{F_n})<L|F_n|/2.\]
For each $0\le i\le D$, define the map $F_i: X\to[0,1]^{L|F_n|}=([0,1]^L)^{F_n}$ by
\[(F_i(x))_h=f_i(hx),\ \ h\in F_n.\]
Then by \eqref{e1}, we have the following implication:
\[d_{F_n}(x,y)<\varepsilon\Longrightarrow\|F_i(x)-F_i(y)\|_\infty<\delta\]
for all $x,y\in X$ and $0\le i\le D$. Applying Lemma \ref{3.1}, we can choose $\varepsilon$-embeddings
\[G_0, G_1,\cdots, G_D: X\to [0,1]^{L|F_n|}\]
with respect to the metric $d_{F_n}$ such that $\|F_i(x)-G_i(x)\|_\infty<\delta$ for all $x\in X$. Since ${\rm dim}_{Rok}(Y, G)=D$, by Definition \ref{Rokdim}, we can find open sets $U_0, U_1, \cdots, U_D$ with respect to the integer $n$ such that $\bigcup_{0\le i\le D}F_nU_i=Y$. Define
\[V^h_i=\pi^{-1}(hU_i)\]
for $h\in F_n$ and $0\le i\le D$. Since $\pi$ is surjective, the collection of open sets $\{V_i^h: h\in F_n, 0\le i\le D\}$ forms a Rokhlin cover of $X$, namely,
\[X=\bigcup_{0\le i\le D}\bigsqcup_{h\in F_n}V_i^h.\]
Denote $W_i=\overline{\bigsqcup_{h\in F_n}V_i^h}$ for every $0\le i\le D$. For every $h\in F_n$, we use $p_h: ([0,1]^L)^{F_n}\to [0,1]^L$ to denote the canonical projection onto the $h$-coordinate. For each $0\le i\le D$, define the map $g_i': W_i\to[0,1]^L$ by
\[g_i'(x)=p_h\circ G_i(h^{-1}x)\ \ {\rm for}\ x\in\overline{V_i^h}.\]
Now since $\|F_i(x)-G_i(x)\|_\infty<\delta$ holds for all $x\in X$, for every $0\le i\le D$ and $x\in \overline{V_i^h}$, we have
\[\|f_i(x)-g_i'(x)\|_\infty=\|p_h\circ F_i(h^{-1}x)-p_h\circ G_i(h^{-1}x)\|<\delta,\]
that is, $\|f_i(x)-g_i'(x)\|<\delta$ on the domain $W_i$ of $g_i'$. Since $W_i$ is a closed subset of $X$, from Lemma \ref{A5}, we can then take a continuous extension $g_i: X\to [0,1]^L$ of $g_i'$ so that $\|f_i(x)-g_i(x)\|_\infty<\delta$ for all $x\in X$.

Now we define 
\[g=g_0\times g_1\times\cdots\times g_D: X\to [0,1]^{(D+1)L}.\]
It is clear that, from the choice of $g_i$, we have $\|f(x)-g(x)\|_\infty<\delta$ for all $x\in X$. Now it suffices to show that $g\in A_\eta$.

For this, we assume that $I_g\times\pi(x)=I_g\times\pi(y)$ for $x,y\in X$. We then have $\pi(x)=\pi(y)$ obviously. By the definition of the cover $\{V_i^h: i=0,1,\cdots, D, h\in F_n\}$, we know that 
\[x,y\in V_i^h\]
for some $i\in\{0,1,\cdots, D\}$  and $h\in F_n$. On the other hand, from $I_g(x)=I_g(y)$, we see that $g_i(wx)=g_i(wy)$ for all $w\in G$. Note that if $w\in F_n$, then $wh^{-1}x\in V_i^w$. In fact, this is because
\[wh^{-1}x\in wh^{-1}(V_i^h)=wh^{-1}\pi^{-1}(hU_i)\stackrel{\pi\ {\rm is\ a\ factor}}{=}\pi^{-1}(wh^{-1}hU_i)=V_i^w.\]
This follows that
\[g_i(wh^{-1}x)=g_i'(wh^{-1}x)=p_w\circ G_i(w^{-1}wh^{-1}x)=p_w\circ G_i(h^{-1}x).\]
Similarly, we have $g_i(wh^{-1}y)=p_w\circ G_i(h^{-1}y)$. Since $w\in F_n$ is arbitrary, this implies
\[G_i(h^{-1}x)=G_i(h^{-1}y).\]
However, by the choice of $G_i$ which is an $\varepsilon$-embedding with respect to the metric $d_{F_n}$, we conclude that
\[d(x,y)=d(hh^{-1}x, hh^{-1}y)\le d_{F_n}(h^{-1}x, h^{-1}y)<\varepsilon<\eta\]
as designed. This completes the proof.
\end{proof}

Note that for any countably infinite group $G$ and a compact metric space $Y$ on which there is a group action of $G$, if the dynamical system $(Y, G)$ is free and ${\rm dim}(Y)<\infty$, then $(Y, G)$ embeds into $([0,1]^G, \sigma)$. In fact, the proof goes almost the same to Theorem 4.2 in \cite{Ja} except that we replace the $T^i(y)\,(i\in\Z)$ with $gy$ for some $g\in G$, we will not give any further details. Also see Theorem 8.3.1 in \cite{MC}. Therefore, applying Corollary \ref{frd} and Corollary \ref{hsb}, we deduce the following.

\begin{cor}[$=$Theorem \ref{ep}]
Let $(X, G)$ be a topological dynamical system, where $X$ is a compact metric space and $G$ a countably infinite group with $|Z(F)|=\infty$ for every finite subset $F\subset G$. Let $\{F_n\}\subset G$ be a  symmetric F$\o$lner sequence such that $(G, \{F_n\})$ satisfies (WAFC) with respect to $L_G$.

Let $m\in\N$, $d\in\N\cup\{0\}$ and $(Y, G)$ a factor of $X$. If $(Y, G)$ is free with ${\rm dim}(Y)=d$ and ${\rm mdim}(X, G)<m/2$, then there is an embedding from $(X, G)$ into $(([0,1]^Q)^G, \sigma)$, where
\[Q=\left(L_G(d+1)+1\right)m+1.\]
\end{cor}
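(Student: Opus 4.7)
The plan is to combine three ingredients already available in the paper: the finiteness of the topological Rokhlin dimension of $(Y,G)$ from Corollary \ref{frd}, the density $G_\delta$ theorem proved immediately above the statement, and the Jaworski-style embedding recorded in the paragraph preceding the corollary.

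First, I would invoke Corollary \ref{frd} to obtain
$$\dim_{\rm Rok}(Y,G)\le L_G(d+1)-1=:D,$$
using freeness of $(Y,G)$, finiteness of $\dim(Y)$, (WAFC), and the center hypothesis on $G$. With this $D$ in hand and with $L:=m$, the assumption ${\rm mdim}(X,G)<m/2=L/2$ lets me apply the preceding density $G_\delta$ theorem to the extension $\pi:X\to Y$: the set
$$A=\{f\in C(X,[0,1]^{(D+1)m}):I_f\times\pi\text{ is an embedding}\}$$
is a dense $G_\delta$ in $C(X,[0,1]^{(D+1)m})$ and in particular nonempty. Picking any $f\in A$ yields an equivariant topological embedding $I_f\times\pi:X\hookrightarrow([0,1]^{(D+1)m})^G\times Y$.

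Second, I would use the cited Jaworski-style fact: since $(Y,G)$ is free with $\dim(Y)<\infty$ and $G$ is countably infinite, there is an equivariant embedding $(Y,G)\hookrightarrow(([0,1])^G,\sigma)$, hence (by padding the other coordinates with constants) also an equivariant embedding $(Y,G)\hookrightarrow(([0,1]^{m+1})^G,\sigma)$. Composing $I_f\times\pi$ with the identity on the first factor and this padded embedding on the $Y$ factor, and using the natural identification
$$([0,1]^{(D+1)m})^G\times([0,1]^{m+1})^G\cong([0,1]^{(D+1)m+m+1})^G,$$
produces an equivariant topological embedding of $X$ into $(([0,1]^Q)^G,\sigma)$ with $Q=(L_G(d+1)+1)m+1\ge(D+1)m+(m+1)$, which is the desired bound on ${\rm edim}(X,G)$.

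The ``in particular'' assertion for $G=\mathbb{H}_{2n+1}(\Z)\times G_1$ follows by verifying the hypotheses of the main statement: Corollary \ref{hg} supplies a symmetric F\o lner sequence of approximate groups for $\mathbb{H}_{2n+1}(\Z)$ satisfying (WAFC), Corollary \ref{dp} transports (WAFC) under the direct product with any of the listed classes, and the centralizer condition is automatic because the center of $\mathbb{H}_{2n+1}(\Z)$ is already infinite, so every finite subset of the product has infinite centralizer. No step is a genuine obstacle at this stage: the substantive inputs---the density $G_\delta$ theorem and the Rokhlin-dimension estimate---are already in place, and what remains is essentially a careful dimension count at the final composition, together with the routine verification of the closure properties of (WAFC) under direct products in the Heisenberg corollary.
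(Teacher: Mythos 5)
Your proposal is correct and follows essentially the same route as the paper: Corollary \ref{frd} supplies $\dim_{\rm Rok}(Y,G)\le L_G(d+1)-1$, the dense $G_\delta$ theorem with $L=m$ gives the embedding $I_f\times\pi$ into $([0,1]^{(D+1)m})^G\times Y$, and the Jaworski-type embedding of the free finite-dimensional factor $(Y,G)$ into $([0,1])^G$ finishes the composition; your padding of the $Y$-coordinate to $m+1$ copies just realizes the (non-optimal) constant $Q$ exactly, where the paper simply absorbs the slack. The only point worth noting is that the $G_\delta$ theorem is stated with $\dim_{\rm Rok}(Y,G)=D$ rather than $\le D$, but this is harmless (repeat or pad towers), and the paper glosses over it in the same way.
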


\section{Appendix A. Outlook and questions}\label{sec:7}

\subsection{Strong Rokhlin dimension and global markers}
Enlightened by the observation in Corollary \ref{am}, we define the following concept.

\begin{df}\label{srd}
Let $(G, \{F_n\})$ be an amenable countable group acting on a compact metric space $X$. 

Let $D\in\N\cup\{0\}$. We say that $(X,G)$ has {\it strong (topological) Rokhlin dimension $D$}, and denote ${\rm dim}_{\rm sRok}(X,G)=D$, if $D$ is the smallest number with the following property: 

For every $n\in\N$, there exist open subsets $U_0, U_1, \cdots, U_D\subset X$ and an $n'\ge n$ such that

(i) $F_n^2\subset F_{n'}$;

(ii) For every $0\le i\le D$, $\overline{U_i}$ is $(F_{n'},1)$-disjoint;

(iii) The towers $\{(U_i, F_n)\}_{0\le i\le D}$ covers $X$, that is, $\bigcup_{0\le i\le D}F_nU_i=X$.
\end{df}

\begin{cor}\label{eqf}
Let $(G, \{F_n\})$ be an amenable countable group acting on a compact metric space $X$ of finite covering dimension. Consider the following conditions.

\noindent $(i)$ $(G,X)$ is free, and

\noindent $(ii)$ $(G,X)$ has finite strong Rokhlin dimension,

\noindent then $(ii)\Rightarrow(i)$. Furthermore, if $G$ is a countably infinite group with the property (I), and in addition, $(G, \{F_n\})$ satisfying (WAFC) with $\{F_n^{-1}\}$ being approximate groups with a uniform constant $L_G$, then $(i)\Leftrightarrow(ii)$. 
\end{cor}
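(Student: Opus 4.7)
I would split the claim into the two implications and handle them separately, with the forward direction being essentially a repackaging of Corollary \ref{am} and the reverse direction a short contradiction argument that uses only the definition of strong Rokhlin dimension.

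\textbf{Proof of $(ii)\Rightarrow(i)$.} I would argue by contradiction. Suppose $(X,G)$ has finite strong Rokhlin dimension $D=\dim_{\rm sRok}(X,G)$ but $(X,G)$ is not free, so some non-identity $g\in G$ fixes a point $x\in X$. Since $\bigcup_n F_n=G$, I would choose $n$ large enough that $g\in F_n$. By Definition \ref{srd}, there is an $n'\ge n$ with $F_n^2\subset F_{n'}$ and open sets $U_0,\dots,U_D$ whose closures are $(F_{n'},1)$-disjoint and such that $\bigcup_{0\le i\le D}F_nU_i=X$. Choose $i$ and $h\in F_n$ with $x\in hU_i$; write $y=h^{-1}x\in U_i$. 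Then $hy=x=gx=g(hy)=(gh)y$, so the point $hy$ lies in both $h\,\overline{U_i}$ and $gh\,\overline{U_i}$. But $h\in F_n\subset F_{n'}$ and $gh\in F_n^2\subset F_{n'}$, while $gh\ne h$ since $g\ne e$, contradicting the $(F_{n'},1)$-disjointness of $\overline{U_i}$.

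\textbf{Proof of $(i)\Rightarrow(ii)$ under the extra hypotheses.} Set $d=\dim(X)<\infty$. Since $(X,G)$ is free and $X$ is a finite-dimensional compact metric space, Theorem 3.8 of \cite{Sza} gives that $(X,G)$ satisfies $(TSBP\le d)$. Under the standing assumptions on $G$ (infinite, property (I), and (WAFC) for a symmetric Følner sequence with constant $L_G$), Corollary \ref{am} produces, for every $n\ge 1$, a natural number $N>n$ with $F_n^2\subset F_N$ together with open sets $U_0,\dots,U_{L_G(d+1)-1}$ such that each closed base $\overline{U_i}$ is $(F_N,1)$-disjoint and $\bigcup_{0\le i\le L_G(d+1)-1}F_nU_i=X$. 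This is precisely the defining property of strong Rokhlin dimension with $D\le L_G(d+1)-1$, so $\dim_{\rm sRok}(X,G)<\infty$.

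\textbf{Main obstacle.} The only point that actually requires work is the reverse implication, but in our setup it is already packaged inside Corollary \ref{am}; all the subtle choices (the enlargement $\widetilde{F}_n$, the central elements $g_j\in Z(F_{N'})$, and the insertion of $F_N$ between $F_n$ and $F_{N'}$) were made there precisely so that $F_n^2\subset F_N$ and the $(F_N,1)$-disjointness of the bases hold simultaneously. Thus the main conceptual difficulty has been discharged earlier, and what remains here is only to check that the output of Corollary \ref{am} matches the format of Definition \ref{srd}, together with the short forbidden-overlap argument sketched above for $(ii)\Rightarrow(i)$.
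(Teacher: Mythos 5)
Your proposal is correct and follows essentially the same route as the paper: the implication $(i)\Rightarrow(ii)$ is discharged by citing Corollary \ref{am} (with Theorem 3.8 of \cite{Sza} supplying $(TSBP\le d)$), and $(ii)\Rightarrow(i)$ is proved by exactly the same contradiction argument, placing a fixed point $x=gx$ in both $h\overline{U_i}$ and $gh\overline{U_i}$ with $h, gh\in F_{n'}$ and invoking the $(F_{n'},1)$-disjointness of the base.
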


\begin{proof}
With Corollary \ref{am}, all that remains to check is (ii) implies (i) for any amenable group. Suppose to the contrary that this is not the case. Then there is $x\in X$ and a group element $g\in G\setminus\{e\}$ with $gx=x$. Choose an $n\in\N$ such that $g\in F_n$.

Since ${\rm dim}_{\rm sRok}=D<\infty$, there are open sets $U_0, U_1,\cdots, U_D\subset X$ and a natural number $n'\ge n$ such that there exists a Rokhlin cover
\[\mathcal{R}=\{(U_i, F_n): i=0,1,\cdots,D\}\]
with $F_n^2\subset F_{n'}$ and every $\overline{U_i}$ being $(F_{n'},1)$-disjoint. Assume that $x\in hU_i$ for some $h\in F_n$ and $0\le i\le D$. Then $hU_i\ni x=gx\in ghU_i$. This means $x\in hU_i\cap ghU_i\ne\varnothing$. However, note that $h\in F_n\subset F_n^2\subset F_{n'}$ and $gh\in F_n^2\subset F_{n'}$, which contradicts to the assumption that $U_i\subset \overline{U_i}$ is $(F_{n'}, 1)$-disjoint.
\end{proof}

\begin{rem}
Checking the proof of Theorem \ref{fad}, one observes that, the quantity that we are in fact using is the finite strong Rokhlin dimension, and since from Corollary \ref{eqf}, for the class of dynamical systems that we are considering, the freeness is equivalent to having a finite strong Rokhlin dimension, according to which we believe that, the strong Rokhlin dimension is actually the right concept for actions of general groups on compact metric spaces.
\end{rem}

\begin{qn}
For what class of countable amenable groups that one can deduce the finiteness of strong (topological) Rokhlin dimension from the freeness and the finiteness of covering dimension of the spaces on which the groups act on?
\end{qn}

\begin{qn}[=Question \ref{fgg}]
Does every finitely generated subgroup of a group satisfying (WAFC) necessarily have subexponential growth?
\end{qn}

For $G=\Z$ and $n\ge1$, it is obvious that if $O$ is an $n$-marker, then so is $T^i(O)$ for any $i\in\Z$. This is the case for any abelian group. However, for non-abelian one, this is no longer ensured. Therefore, we make the following definition.

\begin{df}
Let $(X,G)$ be a topological dynamical system, $F\Subset G$ a finite subset and $O\subset X$ an open set. We say $O$ is a {\it global $F$-marker}, if $gO$ is an $F$-marker for all $g\in G$.

Furthermore, suppose that $G$ is a countable amenable group with $\{F_n\}$ being its F$\o$lner sequence. We say $G$ has the {\it global marker property}, if for every $n\ge1$, $X$ admits a global $F_n$-marker $O$.
\end{df}
\begin{qn}
Let $(X,G)$ be a free topological dynamical system with the marker property where $G$ is a countable amenable group. Then does it necessarily have the global marker property? 
\end{qn}

\subsection{Other Rokhlin-type properties and their relations}
\begin{df}[Definition 3.5, \cite{KS}]
A free action $G\curvearrowright X$ on a compact metric space is said to be {\it almost finite in measure} if for every finite set $K\Subset G$ and $\delta,\varepsilon>0$, there is a (disjoint) open castle $\{(V_i,S_i)\}_{i\in I}$ with levels of diameter less than $\delta$ such that

(i) Every shape $S_i$ is $(K,\delta)$-invariant;

(ii) $\underline{D}(\bigsqcup_{i\in I}S_iV_i)\ge1-\varepsilon$,

\noindent where $\underline{D}(A)=\sup_{F}\inf_{x\in X}(1/|F|)\sum_{s\in F}1_A(sx)$ for any subset $A\subset X$, called the {\it lower Banach density of $A$}.
\end{df}

\begin{df}[Definition 1.9, \cite{Gut2}]
We say that a free dynamical system $(X,T)$ has the {\it topological Rokhlin property}, if for every $\varepsilon>0$, there exists a continuous function $f: X\to\R$ so that the {\it exceptional set}
\[E_f=\{x\in X: f(Tx)\ne f(x)+1\}\]
has orbit capacity less then $\varepsilon$, that is, ${\rm ocap}(E_f)<\varepsilon$.
\end{df}

\begin{df}[Definition 5.1, \cite{Gut3}]
We say that a free dynamical system $(X,T)$ has the {\it strong topological Rokhlin property}, if for every $n\in\N$, there exists a continuous function $f: X\to\R$ so that the sets $T^{-i}(E_f), i=0,1,\cdots,n-1$ are pairwise disjoint.
\end{df}

\begin{df}[Definition 3.1, \cite{N}]
A topological dynamical system $(X,\Gamma)$, where $\Gamma$ is a discrete amenable group, is said to have the {\it uniform Rokhlin property} if for any $\varepsilon>0$ and any finite set $K\Subset \Gamma$, there exists open sets $B_1, B_2,\cdots,B_S\subset X$ and $(K,\varepsilon)$-invariant sets $\Gamma_1,\Gamma_2, \cdots,\Gamma_S\subset\Gamma$ such that
\[B_s\gamma: \gamma\in\Gamma_s, s=1,2,\cdots,S\]
are mutually disjoint and 
\[{\rm ocap}(X\setminus \bigsqcup_{1\le s\le S}\bigsqcup_{\gamma\in\Gamma_s}B_s\gamma)<\varepsilon.\]
\end{df}

\begin{prop}\label{im}
Let $(X, T)$ be a free topological dynamical system on a compact metric space $X$. Then we have the following implications.
\[AF_m\stackrel{(a)}{\Longleftrightarrow}SBP\stackrel{(b)}{\Longrightarrow}MP\stackrel{(c)}{\Longleftrightarrow}STRP\stackrel{(d)}{\Longrightarrow}TRP\stackrel{(e)}{\Longrightarrow}URP,\]
where $AF_m$=almost finite in meausre, SBP=small boundary property, MP=marker property, STRP=strong topological Rokhlin property, TRP=topological Rokhlin property and URP=uniform Rokhlin property.
\end{prop}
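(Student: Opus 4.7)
My plan is to handle the five implications one by one, relying on classical results for $\mathbb{Z}$-actions where possible. Implication (a), $AF_m \Leftrightarrow SBP$, is a theorem of Lindenstrauss--Weiss (for $\mathbb{Z}$-actions) extended to general countable amenable group actions by Kerr--Szab\'o; one refines an SBP-castle to get $(K,\delta)$-invariant shapes with small-diameter levels for the forward direction, and extracts small-boundary open sets from the bases of long Rokhlin towers for the reverse direction. Implication (d), $STRP \Rightarrow TRP$, is immediate: given $\varepsilon > 0$, pick $n \ge \lceil 1/\varepsilon \rceil$ and take $f$ provided by STRP; the disjointness of $T^{-i}E_f$ for $i = 0,\ldots,n-1$ yields ${\rm ocap}(E_f) \le 1/n < \varepsilon$.

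For (b), $SBP \Rightarrow MP$, I would fix $n$ and use SBP to produce a finite open cover of $X$ by sets whose boundaries all have upper Banach density less than $1/(2n)$ and whose diameters are small. Then by a finite induction combined with a maximality/disjointification argument in the spirit of Lemma \ref{4.3}, produce a single open set $O$ with $\{T^i\overline{O}\}_{i=0}^{n-1}$ pairwise disjoint and $\bigcup_{i\in\mathbb{Z}} T^iO = X$; the small-boundary input is exactly what prevents uncovered orbits from appearing at the final stage, since uncovered points would concentrate on boundaries of negligible orbit capacity.

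For (c), $MP \Leftrightarrow STRP$, the forward direction takes an $n$-marker $O$, defines on $\bigcup_{i\in\mathbb{Z}} T^iO$ the return-time cocycle $\tau(x) = \min\{k \ge 0 : T^{-k}x \in \overline{O}\}$ (continuous off the nowhere-dense set $\bigcup_i T^i\partial O$), and extends it via a Urysohn-type interpolation to a continuous $f: X \to \mathbb{R}$ whose exceptional set $E_f$ lies in a small neighborhood of $\bigcup_{i\in\mathbb{Z}} T^i\partial O$; the $F_n$-disjointness of the marker then forces the $T^{-i}E_f$'s to be pairwise disjoint for $i = 0,\ldots,n-1$. Conversely, given $f$ from STRP, continuity of $f$ and $T$ makes $E_f = \{x : f(Tx) \neq f(x)+1\}$ open, freeness plus the boundedness of $f$ on the compact space $X$ force $\bigcup_{i\in\mathbb{Z}} T^iE_f = X$ (any orbit avoiding $E_f$ would carry $f$ unboundedly under iteration), and shrinking $E_f$ slightly to an open $O$ with $\overline{O} \subset E_f$ produces an honest $n$-marker.

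Finally, for (e), $TRP \Rightarrow URP$: given $K \Subset \mathbb{Z}$ and $\varepsilon > 0$, I would apply TRP with a sufficiently small $\varepsilon'$ to obtain $f$ with ${\rm ocap}(E_f) < \varepsilon'$; after a small perturbation we may assume $f$ avoids half-integer values, and then the clopen level sets of $\lfloor f \rfloor$ organize $X\setminus\bigcup_i T^i E_f$ into Rokhlin towers whose shapes are intervals in $\mathbb{Z}$, from which one extracts a subcastle with $(K,\varepsilon)$-invariant shapes whose complement has orbit capacity bounded by a multiple of $\varepsilon'$. The hardest step, and where I expect the most care is needed, is the forward direction $MP \Rightarrow STRP$ of (c): producing a globally continuous $f$ that is an exact cocycle off a neighborhood of $\bigcup_i T^i\partial O$ requires a delicate extension across that boundary, and one must verify that the interpolation does not enlarge the exceptional set beyond what the marker's disjointness can absorb.
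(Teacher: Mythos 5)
Your overall architecture matches the statement, and several pieces are sound: (a) is indeed just Theorem 5.6 of \cite{KS}, your argument for (d) via the separation of return times to $E_f$ is correct, and your reverse direction of (c) (boundedness of $f$ forces every orbit to meet $E_f$, then shrink $E_f$ to a closed-closure marker by a finite subcover) is fine. Note, though, that the paper proves essentially nothing directly: (b) is quoted from the discussion below Conjecture 7.5 of \cite{TTY}, (c) is Theorem 7.3 of \cite{Gut1}, and (e) is assembled from Lemma 4.11 of \cite{Gut2} and Lemma 3.6 of \cite{N}. You are attempting genuine proofs, which is more ambitious but exposes real gaps.

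The concrete gap is in (e). You propose to ``perturb $f$ so that it avoids half-integer values'' and then use ``the clopen level sets of $\lfloor f\rfloor$'' to build the towers. This cannot work unless $X$ is totally disconnected: a continuous real-valued function on a connected component whose image has diameter greater than $1$ must take half-integer values, so no small perturbation achieves this, and the level sets of $\lfloor f\rfloor$ are neither open nor closed in general; in particular they cannot serve as the open bases $B_s$ required by the URP. The point of the paper's route is precisely to absorb this difficulty into the exceptional set: one first invokes Lemma 4.11 of \cite{Gut2} to replace $f$ by a function for which the \emph{enlarged} set $\tilde{E}_f=\{x\in X:(f(x)\notin\Z)\vee(f(Tx)\ne f(x)+1)\}$ still has ${\rm ocap}(\tilde{E}_f)<\varepsilon$, and only then does the level-set decomposition (carried out in Lemma 3.6 of \cite{N}, whose minimality hypothesis is used only to produce such an $f$) yield open towers with $(K,\varepsilon)$-invariant interval shapes covering $X$ up to small orbit capacity. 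Without controlling the orbit capacity of $\{f(x)\notin\Z\}$, your castle construction does not get off the ground. A secondary issue, which you flag yourself, is the forward direction of (c): the return-time function is locally constant off $\bigcup_{i\in\Z}T^i\partial O$, which is a countable union of closed nowhere dense sets but is not closed, so a ``Urysohn-type interpolation'' across it is not a legitimate extension step as stated; the actual construction in Theorem 7.3 of \cite{Gut1} builds $f$ globally rather than by extending a partial cocycle. Your sketch of (b) is plausible in spirit but leaves the termination of the disjointification argument (where SBP is actually consumed) unjustified.
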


\begin{proof}
The implication (a) follows from Theorem 5.6 in \cite{KS}(which in fact concludes that (a) holds for all discrete amenable groups), while (b) comes from the discussion below Conjecture 7.5 in \cite{TTY}. According to Theorem 7.3 in \cite{Gut1}, we get the equivalence (c). The implication (d) is trivial. It suffices to show (e) now.

Suppose that $(X,T)$ has TRP. Applying Lemma 4.11 in \cite{Gut2}, for every $\varepsilon>0$, we can choose a continuous function $f: X\to\R$ such that if we define
\[\tilde{E}_f=\{x\in X: (f(x)\notin\Z)\vee(f(Tx)\ne f(x)+1)\},\]
then ${\rm ocap}(\tilde{E}_f)<\varepsilon$. Note that in the proof of Lemma 3.6 in \cite{N}, one needs the assumption of $(X, T)$ being an extension of a free minimal system only for the existence of such a continuous function $f$. Therefore, we will not give the proof repeatedly, and conclude that the implication (e) then follows from Lemma 3.6 in \cite{N}.
\end{proof}

\begin{rem}
Note that there is only one gap between $AF_m$ and $URP$, namely, in the definition of almost finiteness in measure, all the levels are further required to have arbitrarily small diameters. From Proposition \ref{im}, we see immediately that URP does not imply $AF_m$ in general, as we can choose a minimal dynamical system $(X,T)$ with strictly positive mean dimension. It is well known that such minimal system exists, and that minimality always follows the marker property, hence implies URP by Proposition \ref{im}. However, since it has positive mean dimension, it cannot have SBP, and consequently fails to have $AF_m$.
\end{rem}

\vspace{1cm}

\noindent Sihan Wei, School of Mathematics and Statistics, university of Glasgow, Glasgow, UK

{\em Email address}: {\bf sihan.wei@glasgow.ac.uk}
\quad\par
\quad\par
\noindent Zhuofeng He, BIMSA, Beijing, China

{\em Email address}: {\bf zhuofenghe@bimsa.cn}

\end{document}